\newcommand\sO{{\mathcal O}}
\newcommand\sA{{\mathcal A}}
\newcommand\sN{{\mathcal N}}
\newcommand\sK{{\mathcal K}}
\newcommand{\CC}{\ensuremath{\mathbb{C}}}
\newcommand{\RR}{\ensuremath{\mathbb{R}}}
\newcommand{\ZZ}{\ensuremath{\mathbb{Z}}}
\newcommand{\QQ}{\ensuremath{\mathbb{Q}}}
\newcommand\Lam{\Lambda}
\newcommand\ze{\zeta}
\DeclareMathOperator{\Fix}{Fix}
\DeclareMathOperator{\Ext}{Ext}
\DeclareMathOperator{\Sym}{Sym}
\DeclareMathOperator{\Stab}{Stab}
\DeclareMathOperator{\AGL}{AGL}
\DeclareMathOperator{\Aut}{Aut}
\DeclareMathOperator{\diag}{diag}
\DeclareMathOperator{\ord}{ord}
\DeclareMathOperator{\id}{id}
\DeclareMathOperator{\He}{He}
\DeclareMathOperator{\diff}{diff}
\DeclareMathOperator{\im}{im}
\DeclareMathOperator{\GL}{GL}
\DeclareMathOperator{\Bihol}{Bihol}
\DeclareMathOperator{\SL}{SL}
\DeclareMathOperator{\mult}{mult}
\DeclareMathOperator{\Eig}{Eig}
\DeclareMathOperator{\lcm}{lcm}
\DeclareMathOperator{\cone}{cone}
\DeclareMathOperator{\Conv}{Conv}
\DeclareMathOperator{\Ort}{O}
\DeclareMathOperator{\fix}{fix}
\DeclareMathOperator{\orb}{orb}
\DeclareMathOperator{\con}{conj}
\DeclareMathOperator{\Spec}{Spec}
\DeclareMathOperator{\ter}{ter}
\newcommand{\bigslant}[2]{{\raisebox{.2em}{$#1$}/\raisebox{-.2em}{$#2$}}}
\def\eea{\end{eqnarray*}}
\def\bea{\begin{eqnarray*}}
\newcommand\dual{\mathrel{\raise3pt\hbox{$\underline{\mathrm{\thinspace d
\thinspace}}$}}}
\newcommand\qe{\ifhmode\unskip\nobreak\fi\quad $\Box$}       % box for QED
\def\BOX{\hfill\lower.5\baselineskip\hbox{$\Box$}}
\DeclareMathOperator{\SheafHom}{\mathscr{H}\text{\kern -4pt {\textit{om}}}\,}
\DeclareMathOperator{\SheafExt}{\mathscr{E}\text{\kern -3pt {\textit{xt}}}\,}
\newtheorem{theorem}{Theorem}
\newtheorem{theo}[theorem]{Theorem}
\newtheorem{Theorem}[theorem]{Theorem}
\newtheorem{prop}[theorem]{Proposition}
\newtheorem{Proposition}[theorem]{Proposition}
\newtheorem{cor}[theorem]{Corollary}
\newtheorem{Corollary}[theorem]{Corollary}
\newtheorem{lemma}[theorem]{Lemma}
\newtheorem{Lemma}[theorem]{Lemma}
\newtheorem{example}[theorem]{Example}
\numberwithin{theorem}{section}
\numberwithin{equation}{section}
\theoremstyle{definition}
\newtheorem{Remark}[theorem]{Remark}
\newtheorem{remark}[theorem]{Remark}
\newenvironment{rem}{\begin{remark}\rm}{\end{remark}}
\newtheorem{defin}[theorem]{Definition}
\newtheorem{notation}[theorem]{Notation}
\newtheorem{Notation}[theorem]{Notation}
\newenvironment{definition}{\begin{defin}\rm}{\end{defin}}
\def\tagform@#1{\maketag@@@{\ignorespaces#1\unskip\@@italiccorr}}
\newcolumntype{H}{@{}>{\lrbox0}l<{\endlrbox}} %Spalten ignorieren
\newcommand{\mylabel}[2]{#2\def\@currentlabel{#2}\label{#1}}
\begin{document}

\title[Rigid Canonical Torus Quotients]{The Classification of Rigid Torus Quotients with Canonical Singularities in Dimension Three}
\author{ Christian Glei\ss ner and Julia Kotonski}
\address{Christian Gleissner and Julia Kotonski  \newline University of Bayreuth, Universit\"atsstr. 30, D-95447 Bayreuth, Germany}
\email{christian.gleissner@uni-bayreuth.de, julia.kotonski@uni-bayreuth.de}

\thanks{
\textit{2020 Mathematics Subject Classification.} Primary: 14J10, 14J30, 14L30, 32G05, Secondary: 14M25, 20C15, 20H15.\\
\textit{Keywords}: torus quotients, rigid complex manifolds, crystallographic groups, quotient singularities, toric geometry\\
\textit{Acknowledgements:} The authors would like to thank Ingrid Bauer and Fabrizio Catanese for useful comments and discussions.
}

\begin{abstract}
    We provide a fine classification of rigid three-dimensional torus quotients with isolated canonical singularities, up to biholomorphism and diffeomorphism. This complements the classification of Calabi-Yau 3-folds of type $\rm{III}_0$, which are those quotients with Gorenstein singularities.
    %where only Gorenstein singularities were allowed.
\end{abstract}

\maketitle

\tableofcontents

\section{Introduction}

A \textit{generalized hyperelliptic manifold} is a quotient of a complex torus by a free action of a non-trivial finite group which does not contain translations. The investigation of these manifolds is a classical topic dating back to the beginning of the 20th century, where Bagnera and de Franchis, as well as Enriques and Severi, gave a complete classification in the surface case (cf. \cite{BdF}, \cite{Enriques}).
For their achievements, they were awarded with the  Bordin prize in 1907 and in 1909, respectively.  

Later on in the 1970s, Ushida and Yoshihara provided a complete list of all finite groups leading to hyperelliptic 3-folds (cf. \cite{uchida}). This list consists of 
finitely many abelian groups and the dihedral group $\mathcal D_4$ of order $8$. 
In the cases where the group $G$ is abelian, the quotients were classified by Lange \cite{lange}.
In contrast to the surface case, there exist hyperelliptic 3-folds which are Calabi-Yau. They were studied in \cite{OguisoQuotientType} by Oguiso and Sakurai, independently of the work of the above authors.  They showed that their holonomy group is either $\ZZ_2^2$ or $\mathcal D_4$ and gave explicit examples of 3- and 2-dimensional families, respectively.
%Since the Kodaira dimension of a hyperelliptic manifold is 0, it is a natural question to look for Calabi-Yau manifolds among them.
%Oguiso and Sakurai \cite{OguisoQuotientType} showed that the only hyperelliptic varieties in dimension 3 that are Calabi-Yau have holonomy $\ZZ_2^2$ or $\mathcal D_4$. They gave explicit examples of 3- and 2-dimensional families, respectively.
Note that examples with group $\mathcal D_4$ were also discovered in \cite{SzczD4} and \cite{Johnson} as examples of 6-dimensional flat Riemannian manifolds possessing a complex structure. In \cite{CatDemD4}, Catanese and Demleitner gave an entire classification of hyperelliptic 3-folds with group $\mathcal D_4$.

It was observed in \cite{DG}*{Theorem~1.1} that all hyperelliptic 3-folds have non-trivial deformations. However, relaxing the freeness-condition and allowing isolated canonical singularities, many rigid examples in dimension 3 arise (for examples, see \cite{beauville}, \cite{BG21}). This is where the present work comes in: the aim of this paper is the full classification of all of these quotients up to biholomorphism and homeomorphism.

Let us give a brief overview of the partial results which are known. The rigidity of the action immediately implies that there are no global $G$-invariant holomorphic 1- and 2-forms, hence the irregularities $q_1$ and $q_2$ of the quotient $X$ vanish. If the volume form of the torus is preserved under the group action, then the singularities are all Gorenstein and $X$ admits a crepant resolution $f\colon \hat{X}\to X$ and $\hat{X}$ is then a Calabi-Yau 3-fold. In the above mentioned paper, Oguiso and Sakurai characterized the pairs $(\hat{X},f)$ as a special class of so-called $c_2$-contractions. Moreover, they showed that the group $G$ is cyclic of order $3$ or $7$, or one of the groups
	\[\He(3)=\langle g,h,k\:\mid\: g^3=h^3=k^3=[g,k]=[h,k]=1,\:[g,h]=k\rangle,\quad \ZZ_3^2=\langle h, k\rangle<\He(3).\]
They also described the linear parts of the actions, which turned out to be unique for each group up to equivalence of representations and automorphisms. However, different choices of the translation part of the action may lead to different biholomorphism or even homeomorphism classes of quotients. Motivated by this observation,  we established a fine classification in \cite{GK}*{Theorem~1.1}:\\
There are exactly eight biholomorphism classes of rigid quotients $T/G$ of 3-dimensional complex tori by a holomorphic action with finite fixed locus that preserves the volume form on $T$. They are pairwise topologically distinct. Table~\ref{tab:CalabiYau} contains precisely one representative $Z_i$ for each class. Furthermore, the crepant Calabi-Yau resolutions of the singular quotients are still rigid.
	\medskip
 \begin{center}
    \begin{table}[h]
		{\footnotesize
		\bgroup\def\arraystretch{1.5}\begin{tabular}{|c|c|c|l|c|c|} \hline 
		$i$	&\emph{$G$} & \emph{ $\Lambda$} & \multicolumn{1}{c|}{action }& singularities & $\pi_1(Z_i)$ \\ \hline \hline
		1	&$\ZZ_7$ & $\Lam(\ze_7,\ze_7^2,\ze_7^4) $ & \begin{tabular}{l}$\Phi(1)(z)=\diag(\ze_7,\ze_7^2,\ze_7^4)\cdot z$\end{tabular}& $7\times \tfrac{1}{7}(1,2,4)$ & $\{1\}$ \\ \hline \hline
		2	&$\ZZ_3$ & $\ZZ[\ze_3]^3$ & \begin{tabular}{l}$\Phi(1)(z)=\diag
		(\ze_3,\ze_3,\ze_3)\cdot z $\end{tabular} & $27\times \tfrac{1}{3}(1,1,1)$ & $\{1\}$\\ \hline \hline
		3	&$\ZZ_3^2$ & $\ZZ[\ze_3]^3$ & \begin{tabular}{l}$\Phi(h)(z)=\diag(1,\ze_3^2,\ze_3)\cdot z+(t,t,t)$\\  $\Phi(k)(z)=\diag(\ze_3,\ze_3,\ze_3)\cdot z$\end{tabular}& $9\times \tfrac{1}{3}(1,1,1)$ & $\ZZ_3$\\ \hline
		4   & $\ZZ_3^2$ & $\ZZ[\ze_3]^3+\ZZ(t,t,0) $ & \begin{tabular}{l}$\Phi(h)(z)=\diag(1,\ze_3^2,\ze_3)\cdot z+\tfrac{1}{3}(1,1,3t)$\\  $\Phi(k)(z)=\diag(\ze_3,\ze_3,\ze_3)\cdot z$\end{tabular} &  $9\times \tfrac{1}{3}(1,1,1)$ & $\ZZ_3$\\ \hline
		5   & $\ZZ_3^2$ & $\ZZ[\ze_3]^3+\ZZ(t,t,t) $ & \begin{tabular}{l}$\Phi(h)(z)=\diag(1,\ze_3^2,\ze_3)\cdot z+\tfrac{1}{3}(1,1,1)$\\  $\Phi(k)(z)=\diag(\ze_3,\ze_3,\ze_3)\cdot z$\end{tabular} &  $9\times \tfrac{1}{3}(1,1,1)$ & $\ZZ_3$ \\ \hline
		6   & $\ZZ_3^2$ & $\ZZ[\ze_3]^3+\ZZ(t,t,t)+\ZZ(t,-t,0) $ & \begin{tabular}{l}$\Phi(h)(z)=\diag(1,\ze_3^2,\ze_3)\cdot z+\tfrac{1}{3}(1,1,1)$\\  $\Phi(k)(z)=\diag(\ze_3,\ze_3,\ze_3)\cdot z$\end{tabular} &  $9\times \tfrac{1}{3}(1,1,1)$ & $\ZZ_3$\\
		 \hline \hline
		7 &$\He(3)$ & $\ZZ[\ze_3]^3+\ZZ(t,t,t)$ & \begin{tabular}{l} $\Phi(g)(z)=\begin{pmatrix}
				0 & 0 & 1\\ 1 & 0 & 0 \\ 0 & 1 & 0
			\end{pmatrix} \cdot z+ (t,0,0) $ \\ $\Phi(h)(z)=\diag(1,\ze_3^2,\ze_3)\cdot z+ \tfrac{2}{3}(1,1,1)$ \end{tabular} & $3\times \tfrac{1}{3}(1,1,1)$ & $\ZZ_3^2$\\ \hline
		8 & $\He(3)$ & $\ZZ[\ze_3]^3+\ZZ(t,t,t)+\ZZ(t,-t,0)$ & \begin{tabular}{l} $\Phi(g)(z)=\begin{pmatrix}
				0 & 0 & 1\\ 1 & 0 & 0 \\ 0 & 1 & 0
			\end{pmatrix}\cdot z + (t,0,0) $ \\ $\Phi(h)(z)=\diag(1,\ze_3^2,\ze_3)\cdot z+ \tfrac{2}{3}(1,1,1)$ \end{tabular} & $3\times \tfrac{1}{3}(1,1,1)$ & $\ZZ_3^2$\\ \hline
		\end{tabular}\egroup}
   \caption{Calabi-Yau quotients. In the table, $t:=(1+2\ze_3)/3$ and $\Lam(\ze_7,\ze_7^2,\ze_7^4)$ has the basis $\{(\ze_7^k,\ze_7^{2k},\ze_7^{4k})\mid k=0,\ldots,5\}.$}
   \label{tab:CalabiYau}	
    \end{table}
    \end{center}

It remains to analyze the complementary case where the volume form of the torus is not preserved, or equivalently, the geometric genus $p_g$ of the quotient is $0$. The main result of this paper is the classification of the groups and the quotients in this case:

\begin{Theorem}\label{theo:main}
    Let $G$ be a finite group admitting a rigid, holomorphic and translation-free action on a 3-dimensional complex torus $T$ with finite fixed locus and such that the quotient $X=T/G$ has canonical singularities and $p_g=0$. Then $G$ is one of the following groups:
    \[
        \ZZ_9,\quad \ZZ_{14},\quad \ZZ_3^2=\langle h,k\rangle, \quad \ZZ_3^3=\langle h,k,g\rangle,\quad \ZZ_9\rtimes\ZZ_3=\langle g,h\mid h^3=g^3=1,\:hgh^{-1}=g^4\rangle.
    \]
    There are precisely 13 biholomorphism classes of quotients $T/G$. Table~\ref{table0} contains precisely one representative $Y_i$ of each class.
    %\medskip
	
	%\medskip
    %In the table, $t:=(1+2\ze_3)/3$ and
    %\begin{itemize}
    %    \item $\Lam(\ze_9,\ze_9^4,\ze_9^7)$ denotes  the lattice with basis $\{(\ze_9^k,\ze_9^{4k},\ze_9^{7k})\mid \gcd(k,9)=1\}$,
    %    \item $\Lam(\ze_{14},\ze_{14}^9,\ze_{14}^{11})$ denotes the lattice with basis $\{(\ze_{14}^k,\ze_{14}^{9k},\ze_{14}^{11k})\mid \gcd(k,14)=1\}$.
    %\end{itemize}
    All quotients admit rigid crepant terminalizations with numerically trivial canonical divisor and smooth rigid threefolds as resolutions.\\
    These 13 threefolds form 11 diffeomorphism classes, $Y_4\simeq_{\diff}Y_{4'}$ and $Y_{10}\simeq_{\diff}Y_{10'}$. Explicit diffeomorphisms are given by
    \begin{align*}
        Y_4\longrightarrow Y_{4'},\quad &(z_1,z_2,z_3)\mapsto (-z_1,\overline{z_3},\overline{z_2}),\\
        Y_{10}\longrightarrow Y_{10'},\quad &(z_1,z_2,z_3)\mapsto  (\overline{z_1},\overline{z_2},\overline{z_3})
    \end{align*}
    Furthermore, the homeomorphism and diffeomorphism classes coincide.\\
    All non simply connected quotients have $E^3/\langle \diag(\ze_3,\ze_3,\ze_3^2)\rangle$ as universal cover, which is not rigid but diffeomorphic to the rigid threefold $Z_2=E^3/\langle\ze_3\cdot\id\rangle$.
\end{Theorem}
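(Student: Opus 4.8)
The plan is to establish Theorem~\ref{theo:main} in four stages: first pin down the finite groups $G$, then enumerate the biholomorphism classes, then construct the crepant terminalizations and prove rigidity of the smooth resolutions, and finally sort out the diffeomorphism/homeomorphism classification.

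\textbf{Step 1: Bounding the group.} Since the action is translation-free, $G$ embeds into $\Aut(T)$ with a faithful linear part $\rho\colon G\to\GL_3(\CC)$, acting on $H^0(T,\Omega^1_T)^\vee$, together with a crystallographic structure on $T$. Rigidity forces $H^0(X,\Omega^1_X)=H^0(X,\Omega^2_X)=0$, i.e. $\rho$ and $\wedge^2\rho$ have no invariants; the hypothesis $p_g=0$ says $\det\rho$ is a nontrivial character; and canonicity of the (isolated) singularities translates, via the Reid--Tai criterion applied at each fixed point, into an ``age $\ge 1$'' condition for every group element at each point of its fixed locus. Combining these representation-theoretic constraints with the fact that $\rho(G)$ must preserve a lattice $\Lambda\subset\CC^3$ (so eigenvalues are roots of unity and characters lie in cyclotomic fields of low degree — the CM-type / Voskresenskii-type constraints on tori of dimension $3$) cuts the list of possible linear parts down to finitely many. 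A case analysis — abelian versus non-abelian $\rho(G)$, and within the abelian case the structure of the image inside a maximal torus normalizer — yields exactly the five groups $\ZZ_9,\ZZ_{14},\ZZ_3^2,\ZZ_3^3,\ZZ_9\rtimes\ZZ_3$. I expect \textbf{this step to be the main obstacle}: one must simultaneously juggle the lattice-existence condition (which eliminates many abstractly-possible representations), the finite-fixed-locus condition (every nontrivial element has only isolated fixed points, forcing all its eigenvalues $\ne 1$ on a suitable subspace), and the canonicity condition, and show no further sporadic groups slip through.

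\textbf{Step 2: Biholomorphism classes.} For each group $G$ from Step~1, classify the admissible pairs $(\Lambda,\Phi)$ where $\Phi$ realizes $\rho$ with some translation part. Two such data give biholomorphic quotients iff they are related by an affine change of coordinates normalizing the group; so one computes $H^1(G,T)$ (translation parts modulo coboundaries) and the action of the relevant normalizer/automorphism group on it, and picks orbit representatives. This reproduces the method of \cite{GK}*{Theorem~1.1} and produces the $13$ representatives $Y_i$ of Table~\ref{table0}. One also records $\pi_1(Y_i)$, which is the orbifold fundamental group: an extension $1\to\Lambda\to\pi_1\to G\to 1$ when the action is free, and otherwise the quotient of this by the subgroup normally generated by local monodromies at the fixed points; this computation distinguishes the simply connected ones from those with $\pi_1$ a nontrivial quotient of $\ZZ_3^{\,k}$.

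\textbf{Step 3: Crepant terminalizations and rigidity of the resolutions.} Each isolated canonical singularity of $X$ is a cyclic quotient singularity; since $p_g=0$ these are in general not Gorenstein, so one takes a crepant \emph{partial} resolution (terminalization) $\hat X\to X$ — concretely, resolve each local model $\CC^3/\ZZ_n$ by a toric crepant morphism (a suitable triangulation of the junior simplex in the Reid--Shepherd-Barron--Tai sense), then when no crepant divisor is junior the terminalization is an isomorphism near that point, and $K_{\hat X}\equiv 0$ holds because the discrepancies vanish. Rigidity of $\hat X$ follows as in the Calabi-Yau case: $H^1(\hat X,T_{\hat X})=0$ is checked via a local-to-global spectral sequence, using that $X$ is infinitesimally rigid (from rigidity of the $G$-action on $T$ and the local rigidity of cyclic quotient singularities in dimension $3$) together with vanishing of $H^0$ of the sheaf $\mathcal Ext^1(\Omega^1_{\hat X},\sO_{\hat X})$ supported on the exceptional locus — each exceptional configuration being a rigid toric variety. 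A direct local check at each of the finitely many singular points, using the explicit toric fans, closes this step.

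\textbf{Step 4: Diffeomorphism and homeomorphism classification.} The explicit maps written in the statement, $(z_1,z_2,z_3)\mapsto(-z_1,\overline{z_3},\overline{z_2})$ and $(z_1,z_2,z_3)\mapsto(\overline{z_1},\overline{z_2},\overline{z_3})$, are real-analytic diffeomorphisms of the underlying tori that conjugate the two group actions (one checks they intertwine the linear and translation parts of $Y_4$ with those of $Y_{4'}$, resp. $Y_{10},Y_{10'}$), hence descend to diffeomorphisms of the quotients and, after resolving, of the terminalizations; so at most $11$ classes. To see there are no further coincidences and that homeomorphism implies diffeomorphism here, one separates the $Y_i$ by diffeomorphism invariants: $\pi_1$, and then for fixed $\pi_1$ the integral (co)homology, cup-product structure, and characteristic classes of the smooth resolutions — computed from the toric data and the torus cohomology via the Leray spectral sequence for the resolution composed with the quotient map. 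Since the differing invariants are already homeomorphism (indeed homotopy or topological) invariants, the homeomorphism and diffeomorphism partitions agree. Finally, for the non–simply-connected quotients, their universal cover is $E^3/\langle\diag(\ze_3,\ze_3,\ze_3^2)\rangle$: this is the kernel of $G\to\ZZ_3$ (or $\ZZ_3^2$) acting on the corresponding torus, and one exhibits a diffeomorphism to $Z_2=E^3/\langle\ze_3\cdot\id\rangle$ by the same conjugation trick (post-composing the third coordinate with complex conjugation, which turns $\diag(\ze_3,\ze_3,\ze_3^2)$ into $\diag(\ze_3,\ze_3,\ze_3)$ up to an automorphism of $E$), while non-rigidity of this cover is seen from its explicit $H^1(-,T)\ne 0$.
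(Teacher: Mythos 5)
Your four-stage architecture mirrors the paper's actual organization, and Steps 2 and 3 are essentially the route taken (group cohomology $H^1(G,T)$ with a normalizer action for the biholomorphism classes; explicit toric crepant terminalizations with $\psi_\ast\Theta_{Y_{\ter}}\simeq\Theta_Y$ and $R^1\psi_\ast\Theta_{Y_{\ter}}=0$ to transfer rigidity). However, there are two genuine gaps where the key mechanisms are missing. In Step 1, the constraints you list (rigidity, $p_g=0$, Reid--Tai, lattice preservation) only bound the \emph{orders of elements} and the structure of \emph{stabilizers}; they do not bound $|G|$ or produce the list of five groups. The paper's actual engine is quantitative: first the orbifold Riemann--Roch formula $\chi(\sO_X)=\tfrac{1}{24}(-K_X\cdot c_2+\sum (m_x^2-1)/m_x)$ with $\chi(\sO_X)=1$ and $K_X\cdot c_2=0$ pins down the finitely many possible baskets of singularities (Proposition~\ref{prop:CondNumberSing}, Corollary~\ref{cor:BasketsSing}); then the Lefschetz fixed-point count on tori ($\#\Fix(\alpha)=p^{2n/\varphi(d)}$) combined with a Burnside-type count of singular orbits converts each basket into an equation for $|G|$ (Lemmata~\ref{le:Fix}--\ref{le:CountingFixedPoints}); finally Sylow theory plus the ``standard conditions'' eliminates all but the five groups. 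Without the Riemann--Roch/Lefschetz counting step, ``a case analysis yields exactly the five groups'' is an assertion, not an argument --- you correctly identify this as the main obstacle but supply no mechanism to overcome it.

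The second gap is in Step 4. Your plan to separate diffeomorphism classes by integral cohomology, cup products and characteristic classes of the resolutions is both unsubstantiated (it is not at all clear that, say, $Y_4$ and $Y_5$ --- same group, same $\pi_1$, same basket of singularities --- differ in any of these invariants) and not how one gets ``homeomorphic $\Rightarrow$ diffeomorphic'' here. The paper's tool is Bieberbach's rigidity theorem for crystallographic groups: a homeomorphism of quotients induces an isomorphism of orbifold fundamental groups, which is necessarily conjugation by an affine transformation of $\RR^6$; hence every homeomorphism is realized by an affinity, is automatically a diffeomorphism, and the diffeomorphism classification reduces to the \emph{same} orbit computation as the biholomorphism classification, only with the real normalizer $\sN_\RR(\Lam,\Lam')$ in place of $\sN_\CC(\Lam,\Lam')$ (Propositions~\ref{prop:ConsBieb} and~\ref{prop:classes}). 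This is what proves both that there are no coincidences beyond $Y_4\simeq_{\diff}Y_{4'}$ and $Y_{10}\simeq_{\diff}Y_{10'}$ and that the homeomorphism and diffeomorphism partitions coincide; your proposal, as written, establishes neither. (Your treatment of the universal cover and its non-rigidity is fine and matches the paper's use of $\pi_1\simeq G/G_{\fix}$ via Armstrong's theorem.)
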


\begin{center}
    \begin{table}[h!]
		{\footnotesize
		\bgroup\def\arraystretch{1.5}\begin{tabular}{|c|c|c|l|c|c|} \hline 
		$i$	&\emph{$G$} & \emph{ $\Lambda$} & \multicolumn{1}{c|}{action} & singularities & $\pi_1(Y_i)$ \\ \hline \hline
		1	&$\ZZ_9$ & $\Lam(\ze_9,\ze_9^4,\ze_9^7)$ & \begin{tabular}{l}$\Phi(1)(z)=\diag(\ze_9,\ze_9^4,\ze_9^7)\cdot z$\end{tabular}& \begin{tabular}{c}
		$8\times \tfrac{1}{3}(1,1,1)$\\ $3\times\tfrac{1}{9}(1,4,7)$ \end{tabular}& $\{1\}$ \\ \hline \hline
		2	&$\ZZ_{14}$ &  $\Lam(\ze_{14},\ze_{14}^9,\ze_{14}^{11})$& \begin{tabular}{l}$\Phi(1)(z)=\diag
		(\ze_{14},\ze_{14}^9,\ze_{14}^{11})\cdot z $\end{tabular} & \begin{tabular}{c} $9 \times \tfrac{1}{2}(1,1,1)$\\ $3\times\tfrac{1}{7}(1,2,4)$\\ $1\times \tfrac{1}{14}(1,9,11)$ \end{tabular}& $\{1\}$\\ \hline \hline
		3	&$\ZZ_3^2$ & $\ZZ[\ze_3]^3$ & \begin{tabular}{l}$\Phi(h)(z)=\diag(1,\ze_3^2,\ze_3^2)\cdot z+(t,t,t)$\\  $\Phi(k)(z)=\diag(\ze_3,\ze_3,\ze_3^2)\cdot z$\end{tabular}& $9\times \tfrac{1}{3}(1,1,2)$ & $\ZZ_3$\\ \hline
		4   & $\ZZ_3^2$ & $\ZZ[\ze_3]^3+\ZZ(t,t,0) $ & \begin{tabular}{l}$\Phi(h)(z)=\diag(1,\ze_3^2,\ze_3^2)\cdot z+\tfrac{1}{3}(1,1,3t)$\\  $\Phi(k)(z)=\diag(\ze_3,\ze_3,\ze_3^2)\cdot z$\end{tabular} &  $9\times \tfrac{1}{3}(1,1,2)$ & $\ZZ_3$\\ \hline
        4'   & $\ZZ_3^2$ & $\ZZ[\ze_3]^3+\ZZ(t,0,t) $ & \begin{tabular}{l}$\Phi(h)(z)=\diag(1,\ze_3^2,\ze_3^2)\cdot z+\tfrac{1}{3}(1,3t,2\ze_3^2)$\\  $\Phi(k)(z)=\diag(\ze_3,\ze_3,\ze_3^2)\cdot z$\end{tabular} &  $9\times \tfrac{1}{3}(1,1,2)$ & $\ZZ_3$\\ \hline
		5   & $\ZZ_3^2$ & $\ZZ[\ze_3]^3+\ZZ(t,t,t) $ & \begin{tabular}{l}$\Phi(h)(z)=\diag(1,\ze_3^2,\ze_3^2)\cdot z+\tfrac{1}{3}(1,1,2)$\\  $\Phi(k)(z)=\diag(\ze_3,\ze_3,\ze_3^2)\cdot z$\end{tabular} &  $9\times \tfrac{1}{3}(1,1,2)$ & $\ZZ_3$ \\ \hline
		6   & $\ZZ_3^2$ & $\ZZ[\ze_3]^3+\ZZ(t,t,t)+\ZZ(t,-t,0) $ & \begin{tabular}{l}$\Phi(h)(z)=\diag(1,\ze_3^2,\ze_3^2)\cdot z+\tfrac{1}{3}(1,1,2)$\\  $\Phi(k)(z)=\diag(\ze_3,\ze_3,\ze_3^2)\cdot z$\end{tabular} &  $9\times \tfrac{1}{3}(1,1,2)$ & $\ZZ_3$\\
		 \hline \hline
        7 & $\ZZ_3^2$ & $\ZZ[\ze_3]^3$ & \begin{tabular}{l}$\Phi(h)(z)=\diag(\ze_3,\ze_3,1)\cdot z+(t,t,t)$\\  $\Phi(k)(z)=\diag(\ze_3,\ze_3,\ze_3^2)\cdot z$\end{tabular} &\begin{tabular}{c} $9\times \tfrac{1}{3}(1,1,1)$\\ $9\times\tfrac{1}{3}(1,1,2)$\end{tabular} & $\{1\}$\\ 
        \hline
        8 & $\ZZ_3^2$ & $\ZZ[\ze_3]^3+\ZZ(t,t,t)$ & \begin{tabular}{l}$\Phi(h)(z)=\diag(\ze_3,\ze_3,1)\cdot z+\tfrac{1}{3}(1,1,1)$\\  $\Phi(k)(z)=\diag(\ze_3,\ze_3,\ze_3^2)\cdot z$\end{tabular} & \begin{tabular}{c}$9\times \tfrac{1}{3}(1,1,1)$\\ $9\times\tfrac{1}{3}(1,1,2)$\end{tabular} & $\{1\}$\\ 
        \hline \hline
        9 & $\ZZ_3^3$ & $\ZZ[\ze_3]^3$ & \begin{tabular}{l}$\Phi(h)(z)=\diag(1,\ze_3^2,\ze_3)\cdot z+(-t,-t,t)$\\  $\Phi(g)(z)=\diag(\ze_3,1,1)\cdot z +(-t,0,-t) $ \\$\Phi(k)(z)=\diag(\ze_3,\ze_3,\ze_3)\cdot z$\end{tabular} & \begin{tabular}{c}$3\times \tfrac{1}{3}(1,1,1)$\\ $9\times\tfrac{1}{3}(1,1,2)$\end{tabular}& $\{1\}$\\ 
        \hline
        10 & $\ZZ_3^3$ & $\ZZ[\ze_3]^3+\ZZ(t,t,0)$ & \begin{tabular}{l}$\Phi(h)(z)=\diag(1,\ze_3^2,\ze_3)\cdot z+\tfrac{1}{3}(-\ze_3^2,2,3t)$\\  $\Phi(g)(z)=\diag(\ze_3,1,1)\cdot z + \tfrac{1}{3}(-\ze_3^2,2\ze_3,0) $ \\$\Phi(k)(z)=\diag(\ze_3,\ze_3,\ze_3)\cdot z$\end{tabular} & \begin{tabular}{c}$3\times \tfrac{1}{3}(1,1,1)$\\ $9\times\tfrac{1}{3}(1,1,2)$\end{tabular} & $\{1\}$\\ 
        \hline
        10'& $\ZZ_3^3$ & $\ZZ[\ze_3]^3+\ZZ(t,t,0)$ & \begin{tabular}{l}$\Phi(h)(z)=\diag(1,\ze_3^2,\ze_3)\cdot z+\tfrac{1}{3}(\ze_3,\ze_3^2,3t)$\\  $\Phi(g)(z)=\diag(\ze_3,1,1)\cdot z + \tfrac{1}{3}(\ze_3^2,\ze_3^2,0) $ \\$\Phi(k)(z)=\diag(\ze_3,\ze_3,\ze_3)\cdot z$\end{tabular} & \begin{tabular}{c} $3\times \tfrac{1}{3}(1,1,1)$\\$9\times\tfrac{1}{3}(1,1,2)$ \end{tabular}& $\{1\}$\\ \hline\hline
        11 & $\ZZ_9\rtimes\ZZ_3$ & $\ZZ[\ze_3]^3$ & \begin{tabular}{l} $\Phi(h)(z)=\diag(\ze_3,1,\ze_3^2)\cdot z + (t,t,t)$\\ $\Phi(g)(z)=\begin{pmatrix}
            0 & 1 & 0\\ 0 & 0 & 1\\ \ze_3 & 0 & 0
        \end{pmatrix}\cdot z$\end{tabular} &\begin{tabular}{c} $2\times \tfrac{1}{3}(1,1,1)$\\$3\times\tfrac{1}{9}(1,4,7)$ \end{tabular}& $\{1\}$ \\ \hline
        \end{tabular}\egroup}
        \caption{Quotients with $p_g=0$. In the table, $t:=(1+2\ze_3)/3$ and $\Lam(\ze_9,\ze_9^4,\ze_9^7)$  has basis $\{(\ze_9^k,\ze_9^{4k},\ze_9^{7k})\mid \gcd(k,9)=1\}$ and
         $\Lam(\ze_{14},\ze_{14}^9,\ze_{14}^{11})$ has  basis $\{(\ze_{14}^k,\ze_{14}^{9k},\ze_{14}^{11k})\mid \gcd(k,14)=1\}$.}
         \label{table0}
    \end{table}
	\end{center}	

Clearly, biholomorphism classes with different geometric genus can not coincide, but diffeomorphisms might exists. All possible relations and the final classification result is summarized as follows:

\begin{Theorem}
    Let $G$ be a finite group admitting a rigid, holomorphic and translation-free action on a 3-dimensional complex torus $T$ with finite fixed locus and such that the quotient $X=T/G$ has canonical singularities. Then $G$ is one of the following groups:
    \[
        \ZZ_3,\quad\ZZ_7,\quad\ZZ_9,\quad\ZZ_{14},\quad\ZZ_3^2,\quad\ZZ_3^3,\quad \He(3),\quad \ZZ_9\rtimes\ZZ_3.
    \]
    The quotients $X=T/G$ form 21 biholomorphism classes, which can be represented by $Z_1,\ldots,Z_8$, $Y_1,\ldots,Y_{10'}$ from the Tables~\ref{tab:CalabiYau} and \ref{table0}, and 15 diffeomorphism classes
    \begin{align*}
        &Z_1,\quad Z_2,\quad Z_7,\quad Z_8, \quad Y_1, \quad Y_2, \quad Y_7, \quad Y_8,\quad Y_9,\quad Y_{11}\\
        &Z_3\simeq_{\diff}Y_3,\quad Z_4\simeq_{\diff}Y_4\simeq_{\diff}Y_{4'},\quad Z_5\simeq_{\diff}Y_5,\quad Z_6\simeq_{\diff}Y_6,\quad Y_{10}\simeq_{\diff}Y_{10'}
    \end{align*}
    Explicit diffeomorphisms $Z_k\to Y_k$ for $k=3,\ldots,6$ are given by $(z_1,z_2,z_3)\mapsto (z_1,z_2,-\overline{z_3})$.\\
    The homeomorphism and diffeomorphism classes coincide.
\end{Theorem}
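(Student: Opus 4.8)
The plan is to deduce the statement by merging the two partial classifications already available — the one for quotients with $p_g=1$ from \cite{GK}, recalled above, and the one for $p_g=0$, which is Theorem~\ref{theo:main} — and then gluing the two resulting lists of representatives. First, the reduction to these two cases. For a quotient $X=T/G$ as in the statement one has $p_g(X)=\dim H^0(T,\Omega^3_T)^G\in\{0,1\}$, and, as recalled in the introduction, $p_g(X)=1$ exactly when the $G$-action preserves the volume form of $T$, in which case $X$ is Gorenstein. Hence the quotients with $p_g=1$ are precisely the Calabi--Yau quotients of \cite{GK}, with groups $\ZZ_3,\ZZ_7,\ZZ_3^2,\He(3)$ and representatives $Z_1,\dots,Z_8$ of Table~\ref{tab:CalabiYau}, while those with $p_g=0$ are precisely the quotients of Theorem~\ref{theo:main}, with groups $\ZZ_9,\ZZ_{14},\ZZ_3^2,\ZZ_3^3,\ZZ_9\rtimes\ZZ_3$ and representatives $Y_1,\dots,Y_{11}$ of Table~\ref{table0}. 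The group list in the statement is the union of the two. Since $p_g$ is a biholomorphism invariant, no $Z_i$ is biholomorphic to a $Y_j$, so together with the two internal counts we obtain $8+13=21$ biholomorphism classes.

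\textit{The explicit diffeomorphisms $Z_k\to Y_k$.} Recall that, by \cite{GK}, the $Z_i$ are pairwise non-homeomorphic, and that, by Theorem~\ref{theo:main}, the $Y_j$ realise $11$ homeomorphism classes with the only coincidences $Y_4\simeq_{\diff}Y_{4'}$ and $Y_{10}\simeq_{\diff}Y_{10'}$. It remains to construct the four diffeomorphisms $Z_k\to Y_k$ for $k=3,\dots,6$ and to exclude any other coincidence between a $Z$ and a $Y$. For the first task I would check directly that the real-analytic map $\psi\colon\CC^3\to\CC^3$, $(z_1,z_2,z_3)\mapsto(z_1,z_2,-\overline{z_3})$, descends to the quotients in question. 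The key facts are that the conjugate-linear involution $w\mapsto-\overline{w}$ of $\CC$ fixes the ring $\ZZ[\ze_3]=\ZZ+\ZZ\ze_3$ setwise, and that $t=(1+2\ze_3)/3$ is purely imaginary, so $-\overline{t}=t$. Consequently, for each $k\in\{3,4,5,6\}$, $\psi$ carries the lattice of $Z_k$ onto the lattice of $Y_k$ (the extra generators beyond $\ZZ[\ze_3]^3$ either involve only the first two coordinates or have purely imaginary third entry), and conjugating an affine map $z_3\mapsto\ze_3 z_3+a$ of the third coordinate by $\psi$ yields $w_3\mapsto\ze_3^2 w_3-\overline{a}$. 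Comparing Tables~\ref{tab:CalabiYau} and \ref{table0} for these four indices shows that the sole difference between the $Z_k$- and the $Y_k$-action is precisely the replacement of $\ze_3$ by $\ze_3^2$ in the third coordinate, with translation parts agreeing modulo the respective lattice; hence $\psi$ is equivariant with respect to the identity automorphism of $G$ and descends to a diffeomorphism $Z_k\to Y_k$. I expect this to be the most delicate step: one must match lattice, both linear parts, and all translations modulo the lattice simultaneously under the single conjugate-linear reflection $\psi$, and confirm that the interchange $\ze_3\leftrightarrow\ze_3^2$ it induces on the third coordinate is exactly the discrepancy between the two tables there and nowhere obstructs descent.

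\textit{Excluding further coincidences and concluding.} The fundamental group partitions the $21$ manifolds into those that are simply connected, those with $\pi_1\cong\ZZ_3$ (namely $Z_3,\dots,Z_6$ and $Y_3,\dots,Y_6$), and those with $\pi_1\cong\ZZ_3^2$ (namely $Z_7,Z_8$); the last class meets no $Y_j$ and is already settled in \cite{GK}. In the $\pi_1\cong\ZZ_3$ class, the four diffeomorphisms above together with the fact that $Z_3,\dots,Z_6$ are pairwise non-homeomorphic show that this class consists of exactly the four classes $[Z_3],[Z_4],[Z_5],[Z_6]$, with no cross-coincidence. For the simply connected manifolds I would distinguish by the multiset of homeomorphism types of the isolated singularities: the link of a cyclic quotient singularity $\tfrac1n(a_1,a_2,a_3)$ has fundamental group $\ZZ_n$, so the multiset of the orders $n$ is a homeomorphism invariant of $X$. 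Reading these off Tables~\ref{tab:CalabiYau} and \ref{table0}, the quotients $Z_1$ (seven $\ZZ_7$-points), $Z_2$ (twenty-seven $\ZZ_3$-points), $Y_1$, $Y_2$, $Y_{11}$ and the pairs $Y_7,Y_8$ (eighteen $\ZZ_3$-points) and $Y_9,Y_{10}$ (twelve $\ZZ_3$-points) have pairwise distinct order-multisets, the only repetitions being within the two pairs; since $Y_7\not\simeq_{\diff}Y_8$ and $Y_9\not\simeq_{\diff}Y_{10}$ are part of Theorem~\ref{theo:main}, this gives exactly nine diffeomorphism classes among the simply connected quotients, hence $15$ classes in all, with the stated identifications and no others. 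Finally, all invariants used here — $\pi_1$, the homeomorphism types of the singularity links, and the internal classifications of \cite{GK} and of Theorem~\ref{theo:main}, which are themselves homeomorphism classifications — are homeomorphism invariants, whereas the identifications we produced are genuine diffeomorphisms; therefore the homeomorphism and the diffeomorphism classifications coincide.
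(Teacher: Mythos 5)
Your proposal is correct and follows essentially the route the paper intends for this theorem: merge the $p_g=1$ classification of \cite{GK} with Theorem~\ref{theo:main}, use the biholomorphism invariance of $p_g$ to get $8+13=21$ biholomorphism classes, verify that $(z_1,z_2,z_3)\mapsto(z_1,z_2,-\overline{z_3})$ carries lattice, linear parts and translation parts (mod the lattice) of $Z_k$ to those of $Y_k$, and rule out all other coincidences by homeomorphism invariants. The only minor deviation is that you separate the simply connected quotients by the multiset of local fundamental groups of the singularity links, where the paper would invoke Proposition~\ref{prop:ConsBieb} (homeomorphic quotients have isomorphic Galois groups, and homeomorphism classes equal diffeomorphism classes); both arguments are valid and yield the same $15$ classes.
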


We want to mention that partial classification results were already obtained by the first author and Bauer in \cite{BG21}. Here, the authors made the assumption that the torus is a product of three elliptic curves and the action of the group on the product is diagonal and faithful on each factor. This allowed them to use product quotients techniques. They found precisely the examples $Y_3,Y_5,Y_7$ and $Y_8$ in our main Theorem \ref{theo:main} and the Calabi-Yau threefolds $Z_3$ and $Z_5$ in Table~\ref{tab:CalabiYau}.\\
We want to point out that all tori occurring in our classification are (abstractly) isomorphic to a pro\-duct of three elliptic curves: either three copies of Fermat's elliptic curve $E:=\CC/\ZZ[\ze_3]$ or three copies of $E_{u_7}:=\CC/\ZZ[\ze_7+\ze_7^2+\ze_7^4]$, but conjugating the actions with these isomorphisms lead to non-diagonal actions, which do not fit in the setup of \cite{BG21}.\\

Our method for the classification of the quotients relies heavily on the fact that the fundamental group of the regular locus of such a quotient is a crystallographic group. This allows us to use Bieberbach's structure theorems \cite{Bieber1}, \cite{Bieber2} and group cohomology, which are standard tools in the classification of compact flat manifolds and free torus quotients (cf.~\cite{charlap}, \cite{HalendaLutowski}, \cite{DG}) and were extended to the singular case in \cite{GK}.

\bigskip
The paper is organized as follows:
 in Section~\ref{sec:preliminaries}, we collect some preliminaries concerning torus quotients and introduce the notion of orbifold fundamental groups, which are crystallographic in this setup. Furthermore, we recall that the rigidity of a torus quotient is encoded in  the linear part of the action and explain why rigid torus quotients have vanishing irregularities $q_1$ and $q_2$. 
The list of possible Galois groups is determined in Section~\ref{sec:classgroup}. For this purpose, we first analyze and bound  the order of the stabilizers which turn out to be always cyclic (cf. Theorem~\ref{theo:groups}), which enables us to use Morrison's classification of isolated canonical cyclic quotient singularities \cite{Morrison}. 
Knowing the possible types of singularities,  the orbifold Riemann-Roch formula and methods from group and representation theory allow us to deduce the groups and the linear parts of the actions. Section \ref{sec:classQuot} is devoted to the fine classification of the quotients up to biholomorphism and diffeomeorphism applying  techniques as mentioned above.
In Section~\ref{sec:FundGroups}, we study the structure of the fundamental groups of torus quotients and their universal covers and compute them explicitly for our quotients. 
 In Section~\ref{sec:resolution}, we use methods from toric geometry locally to prove that our quotients admit rigid crepant terminalizations and rigid resolutions of their singularities.
 The proof of our main theorem~\ref{theo:main} is provided in 
Section~\ref{sec:proofMain}. Here, we finally put the results of the previous sections together.\\

During the classification of the groups as well as for the quotients, some computations were performed using the computer algebra system MAGMA (\cite{MAGMA}). The code can be found on the website:

\begin{center}
  \url{https://www.komplexe-analysis.uni-bayreuth.de/de/team/gleissner/index.html}  
\end{center}

% ********************************************

\section{Preliminaries}\label{sec:preliminaries}

Let $T = \CC^n/\Lam$ be a complex torus and $G$ a finite group acting faithfully and holomorphically on $T$ via 
\[\Phi\colon G\longhookrightarrow \Bihol(T).\]
Since holomorphic maps between complex tori are affine, we can decompose the action into its \textit{linear part} $\rho$ and its \textit{translation part} $\tau$, i.e., $\Phi(g)(z) = \rho(g)z + \tau(g)$ for all $g\in G$. The homomorphism
\begin{align*}
	\rho \colon G \longrightarrow \GL(\CC^n),\quad g\mapsto \rho(g),
\end{align*}
is called the \textit{analytic representation}. Since the quotient of a complex torus by a finite group of translations is again a complex torus, we can and will always assume that $G$ acts without translations, equivalently $\rho$ is faithful. In contrast to the linear part, the translation part $\tau\colon G\to T$ is not a homomorphism, but a 1-cocycle,
thus, it defines an element of  the first group cohomology
\[H^1(G,T)=\frac{\{\tau\mid \tau(gh)=\rho(g)\tau(h)+\tau(g)\}}{\{\tau\mid \exists\, d\in T\colon \tau(g)=\rho(g)d-d\}}. \]
Here,  we view $T$ as a $G$-module via the action of $\rho$. Up to conjugation by a translation, the action $\Phi$ is uniquely determined by $\rho$ and the cohomology class of $\tau$. Conversely, the choice of a cohomo\-lo\-gy class together with $\rho$ yields an action on $T$, which is well-defined up to conjugation by a translation.\\

The group of lifts
\[
    \Gamma=\pi_1^{orb}(T,G):=\{\gamma\colon \CC^n\to\CC^n\mid \exists \:g\in G \colon g\circ p =   p \circ \gamma\},
\]
where $p\colon \CC^n\to T$ is the quotient map, is called the \emph{orbifold fundamental group}. 
 If the action of $G$ on $T$ is free in codimension at least 1, then the orbifold fundamental group $\Gamma$ coincides with the fundamental group of the regular locus of the quotient $X=T/G$.
Since $G$ is finite, we can assume without loss of generality that the analytic representation $\rho$ is unitary. Hence, its decomplexification is orthogonal and $\Gamma$ can be considered as a cocompact and discrete subgroup of  the Euclidean group $\mathbb{E}(2n):=\RR^{2n}\rtimes \Ort(2n)$.

\begin{definition}
    A discrete cocompact subgroup of $\mathbb{E}(n)$ is called a \emph{crystallographic group}.
\end{definition}

Bieberbach's theorems describe the structure of crystallographic groups:

\begin{Theorem}[\cite{Bieber1}, \cite{Bieber2}]
    The  translation subgroup $\Lambda:=\Gamma \cap \mathbb R^n$ of a crystallographic group  $\Gamma \leq \mathbb{E}(n)$ is a 
	lattice  of rank $n$ and the quotient $\Gamma/\Lambda$ is finite.  All normal abelian subgroups of $\Gamma$ are contained in $\Lambda$. Furthermore, an isomorphism between two crystallographic groups is given by conjugation with an affine transformation.
\end{Theorem}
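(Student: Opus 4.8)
\emph{Sketch of the intended argument.} The statement packages together Bieberbach's three structure theorems, and the plan is to establish them in the listed order. Write $r\colon\mathbb{E}(n)\to\Ort(n)$ for the projection onto the rotational part, so that $\Lambda=\ker(r|_\Gamma)=\Gamma\cap\RR^n$ and the point group is $P:=r(\Gamma)\cong\Gamma/\Lambda$. Since $\Ort(n)$ is compact, to see that $P$ is finite it suffices to see that $P$ is discrete, i.e. that $\{\gamma\in\Gamma:\|r(\gamma)-\Id\|<\varepsilon\}\subseteq\Lambda$ for some $\varepsilon>0$. I would base this on the commutator estimate $\|[A,B]-\Id\|\le 2\,\|A-\Id\|\,\|B-\Id\|$ for $A,B\in\Ort(n)$, which, for a left-invariant metric $d$ on $\mathbb{E}(n)$, upgrades near the identity to $d([g,h],1)\le C\,d(g,1)\,d(h,1)$: commutators of elements with small rotational part again have rotational part, and indeed full displacement, quadratically smaller. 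If $\Gamma$ had elements whose rotational part were arbitrarily close to, but different from, $\Id$, one uses cocompactness (the $\Gamma$-orbits in $\RR^n$ are relatively dense, so $\RR^n/\Gamma$ has finite diameter) to replace such an element by a $\Gamma$-conjugate that moves a fixed base point only slightly, hence lies near $1$ in $\mathbb{E}(n)$, and then iterates commutators: one produces pairwise distinct nontrivial elements of $\Gamma$ converging to $1$, contradicting discreteness. Thus the threshold $\varepsilon$ exists, $P$ is finite, and $[\Gamma:\Lambda]<\infty$.

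Next I would treat the lattice and maximality claims. As a discrete subgroup of $\RR^n$, $\Lambda$ is free abelian of some rank $k\le n$ spanning a subspace $V$; since $[\Gamma:\Lambda]<\infty$, the covering $\Lambda\backslash\RR^n\to\Gamma\backslash\RR^n$ is finite, and the base is compact by cocompactness, so $\Lambda\backslash\RR^n\cong(V/\Lambda)\times\RR^{n-k}$ is compact, forcing $k=n$. For the maximality statement, let $N\trianglelefteq\Gamma$ be normal abelian and take $\nu=(a,A)\in N$. Conjugating $\nu$ by a translation $(v,\Id)\in\Lambda$ and multiplying by $\nu^{-1}$ produces the pure translation $\bigl((A^{-1}-\Id)v,\Id\bigr)$, which lies in $N$ (as $N\trianglelefteq\Gamma$), hence in $N\cap\Lambda$; since $N$ is abelian, $\nu$ commutes with each of these, which rearranges to $(A-\Id)^2v=0$ for all $v\in\Lambda$. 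As $\Lambda$ spans $\RR^n$ we get $(A-\Id)^2=0$, and since $A$ is orthogonal, hence semisimple, $A=\Id$; therefore $\nu\in\Gamma\cap\RR^n=\Lambda$, so $N\subseteq\Lambda$.

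Finally, for the rigidity statement, let $\varphi\colon\Gamma_1\to\Gamma_2$ be an isomorphism of crystallographic groups. By the previous paragraph the translation subgroup is the unique maximal normal abelian subgroup, so $\varphi(\Lambda_1)=\Lambda_2$, and comparing ranks of $\Lambda_i\cong\ZZ^{n_i}$ gives $n_1=n_2=:n$. Viewing $\RR^n$ as $\Lambda_i\otimes_\ZZ\RR$, the restriction $\varphi|_{\Lambda_1}$ extends to a linear automorphism $L\in\GL_n(\RR)$, and $\varphi$ descends to an isomorphism $\overline\varphi\colon P_1\to P_2$ of the finite point groups. Writing $\gamma=(a,A)$ and $\varphi(\gamma)=(a',A')$, conjugation by $\gamma$ (resp. $\varphi(\gamma)$) acts on the lattice as multiplication by $A$ (resp. $A'$), and $\varphi|_{\Lambda_1}=L$ intertwines these, so $LAL^{-1}$ and $A'$ agree on the spanning set $\Lambda_2$, whence $A'=LAL^{-1}$. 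It then remains to find $t\in\RR^n$ so that $\alpha(x):=Lx+t$ conjugates $\Gamma_1$ to $\Gamma_2$ elementwise; using $A'=LAL^{-1}$, this reduces to the requirement $a'-La=(\Id-A')t$ for all $\gamma\in\Gamma_1$. The assignment $c(\gamma):=a'-La$ is readily checked to be a $1$-cocycle of $\Gamma_1$ with values in $\RR^n$ (acted on through $\gamma\mapsto A'$) which vanishes on $\Lambda_1$, hence factors through the finite group $P_1$; since $H^1$ of a finite group with coefficients in a $\QQ$-vector space vanishes, $c$ is a coboundary, so $c(\gamma)=(\Id-A')t$ for a suitable $t$, and $\alpha(x)=Lx+t$ is the sought affine transformation.

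The only genuinely delicate point is the first step: extracting from bare discreteness and cocompactness a uniform bound below which every rotational part is already trivial. The commutator inequality handles the rotational parts transparently, but one must prevent the translational parts from running off while iterating, and it is precisely here that cocompactness — through the finite diameter of $\RR^n/\Gamma$, used to conjugate a ``bad'' element so that it also barely moves a base point — is indispensable. The remaining steps are then essentially formal, relying only on the facts that $\Lambda$ spans $\RR^n$, that orthogonal matrices are semisimple, and that finite groups have vanishing first cohomology with divisible coefficients. For the full details I would appeal to \cite{Bieber1}, \cite{Bieber2}.
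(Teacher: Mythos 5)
Your sketch is correct, but there is nothing in the paper to compare it against: the paper states this result purely as a citation to Bieberbach's original articles and supplies no proof of its own. What you outline is the standard modern treatment --- the commutator estimate $\|[A,B]-\Id\|\le 2\|A-\Id\|\|B-\Id\|$ for orthogonal matrices combined with cocompactness to show the point group is finite (the Frobenius--Bieberbach argument, in the form popularized by Buser), the normality/commutation trick forcing $(A-\Id)^2=0$ and hence $A=\Id$ for elements of a normal abelian subgroup, and the cohomological rigidity argument via $H^1(P,\RR^n)=0$ for $P$ finite, as in Charlap's book. All three steps check out; the only genuinely incomplete point is the iteration in the first step (keeping the translational parts under control while taking commutators), which you correctly identify as the delicate part and defer to the references --- a level of detail consistent with how the paper itself treats the theorem.
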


Since $G$ acts without translations on $T$, the lattice $\Lambda$ of the torus is precisely the subgroup of translations of $\Gamma$.  Thus, the sequence
\[
    0\longrightarrow \Lambda\longrightarrow \Gamma \longrightarrow G\longrightarrow 1
\]
is exact.

Every biholomorphism (or even homeomorphism) $f\colon X\to X'$ between two torus quotients obtained by actions free in codimension 1 induces a biholomorphism (or homeomorphism) between the regular loci of the quotients and therefore an isomorphism between the orbifold groups. Based on this observation, the following proposition can be derived:

\begin{prop}[cf. \cite{GK}*{Proposition~3.6}]\label{prop:ConsBieb}
	Let $\Phi\colon G\to \Bihol(T)$ and $\Phi'\colon G'\to\Bihol(T')$ be translation-free holomorphic actions of finite groups $G$ and $G'$ on $n$-dimensional complex tori $T$ and $T'$ where $n\geq 2$. If the actions are free in codimension 1 and the quotients $X=T/G$ and $X'=T'/G'$ are homeomorphic, then:
	\begin{enumerate}
		\item The groups $G$ and $G'$ are isomorphic.
		\item There exists an affine transformation $\alpha\in \AGL(2n,\RR)$ inducing diffeomorphisms $\widehat{\alpha}$ and $\widetilde{\alpha}$, such that the following diagram commutes:
	 \[
	 \begin{tikzcd}
	 	T \arrow{d}\arrow{r}{\widetilde{\alpha}} & T' \arrow{d}\\
	 	X\arrow{r}{\widehat{\alpha}} & X'.
	 \end{tikzcd}
	\]
 	\end{enumerate}
	Furthermore, any biholomorphism $f\colon X\to X'$ lifts to a biholomorphism of the tori, hence it is induced by an affine transformation $\alpha\in\AGL(n,\CC)$.
\end{prop}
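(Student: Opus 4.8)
The statement to prove is Proposition~\ref{prop:ConsBieb}: given translation-free holomorphic actions $\Phi\colon G\to\Bihol(T)$ and $\Phi'\colon G'\to\Bihol(T')$ on $n$-dimensional complex tori with $n\ge 2$, free in codimension $1$, such that $X=T/G$ and $X'=T'/G'$ are homeomorphic, one must show $G\cong G'$, produce an affine $\alpha\in\AGL(2n,\RR)$ fitting into the commuting square, and show any biholomorphism $f\colon X\to X'$ lifts to an affine map of the tori.

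The starting point is the observation already highlighted in the excerpt: because the actions are free in codimension $1$, the singular locus of $X$ has complex codimension $\ge 2$, hence real codimension $\ge 4$, so $X_{\mathrm{reg}}$ is connected and its complement does not disconnect or otherwise affect $\pi_1$; moreover a homeomorphism $h\colon X\to X'$ must carry singular points to singular points (local topology distinguishes them), hence restricts to a homeomorphism $X_{\mathrm{reg}}\to X'_{\mathrm{reg}}$. Thus $h$ induces an isomorphism $h_*\colon \pi_1(X_{\mathrm{reg}})\to\pi_1(X'_{\mathrm{reg}})$, and by the discussion preceding the Bieberbach theorem these groups are exactly the orbifold fundamental groups $\Gamma=\pi_1^{orb}(T,G)$ and $\Gamma'=\pi_1^{orb}(T',G')$, which sit in the crystallographic extensions $0\to\Lambda\to\Gamma\to G\to 1$ and $0\to\Lambda'\to\Gamma'\to G'\to 1$. (For part~(1) I would take $h$ to be merely a homeomorphism; for the lifting statement I would take $f$ a biholomorphism and remember additionally that $f$ is then orientation-preserving and, crucially, holomorphic on the regular locus.)

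Now apply Bieberbach: the translation lattice of a crystallographic group is the unique maximal normal abelian subgroup, hence it is \emph{characteristic}; therefore the isomorphism $h_*\colon\Gamma\xrightarrow{\sim}\Gamma'$ carries $\Lambda$ onto $\Lambda'$ and induces an isomorphism on quotients, which gives $G\cong G'$ --- this is part~(1). Next, by the rigidity part of Bieberbach's theorem the isomorphism $h_*$ is realized by conjugation with an affine transformation $\alpha\in\AGL(2n,\RR)$ of $\RR^{2n}$; that is, $\alpha\,\Gamma\,\alpha^{-1}=\Gamma'$ inside $\mathbb E(2n)$ once we have arranged $\Gamma,\Gamma'$ to be discrete cocompact subgroups of $\mathbb E(2n)$ via unitary (hence orthogonal decomplexified) analytic representations. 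Since $\alpha$ conjugates $\Lambda$ to $\Lambda'$, it descends to a diffeomorphism $\widetilde\alpha\colon T=\RR^{2n}/\Lambda\to T'=\RR^{2n}/\Lambda'$, and since it conjugates $\Gamma$ to $\Gamma'$ compatibly with the projections to $G\cong G'$, it is equivariant and descends further to $\widehat\alpha\colon X\to X'$, giving the commuting square of part~(2). The smoothness of $\widetilde\alpha$ and $\widehat\alpha$ (on $X_{\mathrm{reg}}$) is immediate since $\alpha$ is affine.

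For the final assertion, suppose $f\colon X\to X'$ is a biholomorphism. Run the argument above to get an affine $\alpha\in\AGL(2n,\RR)$ inducing $f$. It remains to upgrade $\alpha$ to a \emph{complex}-affine map, i.e. to show the linear part $A\in\GL(2n,\RR)$ of $\alpha$ is $\CC$-linear. The point is that $f$ is holomorphic on $X_{\mathrm{reg}}$, so its lift $\widetilde f=\widetilde\alpha$ is a holomorphic map $\CC^n\supset U\to\CC^n$ on a dense open set, yet it is also the affine map $z\mapsto Az+b$; an affine map that is holomorphic on any open set has $\CC$-linear linear part, so $A\in\GL(n,\CC)\subset\GL(2n,\RR)$ and $\alpha\in\AGL(n,\CC)$. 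I expect the main obstacle --- really the only nontrivial point --- to be the bookkeeping needed to pass rigorously from "homeomorphism of quotients" to "isomorphism of orbifold fundamental groups": one must know that $X_{\mathrm{reg}}$ has the right $\pi_1$ (handled by codimension $\ge 2$ of $\Sing X$, as noted in the excerpt) and that a homeomorphism preserves the singular stratum; everything after that is a direct application of Bieberbach's theorems together with the elementary fact about holomorphic affine maps. This is exactly the content of \cite{GK}*{Proposition~3.6}, to which we appeal.
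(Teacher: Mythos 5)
Your overall route is the same as the paper's: pass to the regular loci (using that a homeomorphism preserves the singular stratum and that $\Sing X$ has real codimension $\geq 4$), identify $\pi_1(X_{\mathrm{reg}})$ with the crystallographic group $\Gamma=\pi_1^{orb}(T,G)$, use that $\Lambda$ is the maximal normal abelian subgroup (hence characteristic) to get $G\cong G'$, and invoke Bieberbach's rigidity theorem to realize the isomorphism $\Gamma\to\Gamma'$ by conjugation with an affine $\alpha\in\AGL(2n,\RR)$, which descends to the commuting square. Parts (1) and (2) are fine.

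There is, however, a genuine gap in your last paragraph. The affine map $\alpha$ produced by Bieberbach realizes the isomorphism $f_*\colon\Gamma\to\Gamma'$, but this only guarantees that the induced map $\widehat{\alpha}\colon X\to X'$ has $\widehat{\alpha}_*=f_*$; it does \emph{not} give $\widehat{\alpha}=f$, so your identification $\widetilde f=\widetilde\alpha$ is unjustified (two homeomorphisms inducing the same map on $\pi_1(X_{\mathrm{reg}})$ need not coincide --- think of composing with a homeomorphism isotopic to the identity). The correct argument for the final assertion does not go through Bieberbach at all: since $f_*$ carries $\Lambda=\pi_1(T^{\circ})$ onto $\Lambda'$ (where $T^{\circ}\subset T$ is the complement of the preimage of $\Sing X$, and $\pi_1(T^{\circ})=\pi_1(T)$ because the removed set has real codimension $\geq 4$), the covering space lifting criterion lets you lift $f|_{X_{\mathrm{reg}}}$ to a biholomorphism $\widetilde f\colon T^{\circ}\to T'^{\circ}$; this extends over the removed analytic subsets of codimension $\geq 2$ by the Riemann extension theorem to a biholomorphism $T\to T'$, and a biholomorphism of complex tori is affine, giving $\alpha\in\AGL(n,\CC)$ inducing $f$ itself. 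Your observation that a real-affine map which is holomorphic on an open set has $\CC$-linear linear part is correct but is not needed once one argues this way.
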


Let $f\colon X\to X'$ be a homeomorphism induced by an affine transformation $\alpha(x)=Cx+d$. Then the commutativity of the diagram in Proposition~\ref{prop:ConsBieb} is equivalent to the existence of an isomorphism $\varphi\colon G\to G'$ such that
\[ (\mathrm{a})\: C\rho_\RR(g)C^{-1}=\rho'_\RR(\varphi(g)) \qquad \mathrm{and}\qquad (\mathrm{b})\: (\rho'_\RR(\varphi(g))-\id)d=C\tau(g)-\tau'(\varphi(g)) \]
hold for all $g\in G$, where the second item is an equation holding on $T'$. Note that $\varphi=:\varphi_C$ is uniquely determined by $C$.\\
If we consider $T$ and $T'$ as $G$ and $G'$-modules via $\rho_\RR$ and $\rho'_\RR$, then by item (a), the matrix $C$ induces  a twisted equivariant module isomorphism $C\colon T\to T'$. Item (b) tells us that the cocyles $\tau'$ and $$C\ast\tau:=C\cdot(\tau\circ\varphi_C^{-1})$$ differ by a coboundary.

\begin{notation}
	Let $T=\CC^n/\Lam, T'=\CC^n/\Lam'$ and $\Phi, \Phi'$ be holomorphic actions of a finite group $G$ on $T$ and $T'$, having linear parts $\rho,\rho'\colon G\to\GL(n,\CC)$ and translation parts $\tau$ and $\tau'$, respectively. Denote by $\rho_\RR$ and $\rho'_\RR$ the decomplexifications of $\rho$ and $\rho'$, respectively.\\
	We define
	\[
		\sN_\RR(\Lam,\Lam'):=\{C\in\GL(2n,\RR)\mid C \Lam=\Lam',\: C\cdot\im(\rho_\RR)=\im(\rho'_\RR)\cdot C\}
	\]
	and
	\[
		\sN_\CC(\Lam,\Lam'):=\sN_\RR(\Lam,\Lam')\cap \GL(n,\CC).
	\]
\end{notation}

In summary, we have:

\begin{prop}\label{prop:classes}\
    \begin{enumerate}
        \item Two quotients $X=T/G$ and $X'=T'/G$ are homeomorphic (biholomorphic) if and only if there exists a matrix $C\in \sN_\RR(\Lam,\Lam')$ ($C\in\sN_\CC(\Lam,\Lam'))$ such that $C\ast \tau$ and $\tau'$ belong to the same cohomology class in $H^1(G,T')$.
        \item If $X=T/G$ and $X'=T'/G$ are homeomorphic, then $T$ and $T'$ are isomorphic as $G$-modules up to an automorphism  of $G$.
    \end{enumerate}
\end{prop}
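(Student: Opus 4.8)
The plan is to establish Proposition~\ref{prop:classes} as a direct consequence of Proposition~\ref{prop:ConsBieb} together with the reformulation of the commutativity of the diagram in terms of conditions (a) and (b), which was carried out in the paragraphs immediately preceding the Notation.

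For item (1), I would argue both directions. For the ``only if'' direction, suppose $X = T/G$ and $X' = T'/G$ are homeomorphic (resp. biholomorphic). By Proposition~\ref{prop:ConsBieb}, the homeomorphism is induced by an affine transformation $\alpha(x) = Cx + d$ with $C \in \GL(2n,\RR)$ (resp. $C \in \GL(n,\CC)$, using the last sentence of Proposition~\ref{prop:ConsBieb}), and the commutativity of the diagram translates into conditions (a) and (b) for the \emph{same} fixed group $G = G'$; here one must observe that since the group is the same on both sides, the automorphism $\varphi_C$ of $G$ appearing in the general formulation simply becomes an automorphism of $G$, and by composing with a suitable relabeling one may absorb it — more precisely, condition (a) says exactly that $C \Lambda = \Lambda'$ (since $\alpha$ maps $T$ to $T'$, equivalently $C$ maps the lattice $\Lambda$ of $T$ to the lattice $\Lambda'$ of $T'$) and that $C \cdot \rho_\RR(g) = \rho'_\RR(\varphi_C(g)) \cdot C$, i.e. $C \cdot \im(\rho_\RR) = \im(\rho'_\RR) \cdot C$, so that $C \in \sN_\RR(\Lambda,\Lambda')$ (resp. $\sN_\CC(\Lambda,\Lambda')$). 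Condition (b) is precisely the statement that $C * \tau$ and $\tau'$ differ by a coboundary, i.e. represent the same class in $H^1(G,T')$, as was already noted after the Notation box. For the ``if'' direction, given such a $C$ and a $d \in T'$ witnessing that $C*\tau - \tau'$ is the coboundary $g \mapsto (\rho'_\RR(g) - \id)d$, one checks that $\alpha(x) = Cx + d$ descends to a well-defined homeomorphism (resp. biholomorphism) $X \to X'$: the fact that $C\Lambda = \Lambda'$ guarantees $\alpha$ induces a map $\wave\alpha\colon T \to T'$, and conditions (a), (b) guarantee that $\wave\alpha$ is equivariant with respect to $\Phi$ and $\Phi' \circ \varphi_C$, hence descends to the quotients.

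For item (2), I would simply extract the content of condition (a): if $X$ and $X'$ are homeomorphic, then by item (1) there is $C \in \sN_\RR(\Lambda,\Lambda')$, which by definition satisfies $C\Lambda = \Lambda'$ and $C \cdot \im(\rho_\RR) = \im(\rho'_\RR) \cdot C$. The first property says $C$ induces an isomorphism of the underlying real tori (indeed real Lie groups) $T \to T'$, and the second says this isomorphism intertwines the $G$-action on $T$ via $\rho_\RR$ with the $G$-action on $T'$ via $\rho'_\RR$ \emph{up to the automorphism} $\varphi_C$ of $G$; that is, $T \cong T'$ as $G$-modules after twisting by $\varphi_C \in \Aut(G)$.

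I do not anticipate a genuine obstacle here: the proposition is essentially a bookkeeping summary of Proposition~\ref{prop:ConsBieb} and the discussion of conditions (a) and (b). The only point requiring a little care is the treatment of the automorphism $\varphi_C$ — making sure that ``up to an automorphism of $G$'' in item (2) is stated correctly and that in item (1), where the literature convention is to fix the group, the hypotheses and the set $\sN_\RR(\Lambda,\Lambda')$ are set up so that the $\varphi_C$ is accounted for (in the definition of $C * \tau = C \cdot (\tau \circ \varphi_C^{-1})$ it already is). One should also double-check that ``free in codimension 1'' — a standing hypothesis needed to invoke Proposition~\ref{prop:ConsBieb} — either holds automatically in the situations of interest or is carried along as an assumption.
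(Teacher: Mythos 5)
Your proposal is correct and takes essentially the same route as the paper, which states this proposition with no separate proof beyond the phrase ``In summary, we have:''---i.e.\ it is exactly the bookkeeping consequence of Proposition~\ref{prop:ConsBieb} and the discussion of conditions (a) and (b) that you describe. Your added care about the automorphism $\varphi_C$, the descent of $\alpha(x)=Cx+d$ in the ``if'' direction, and the standing codimension-one freeness hypothesis matches the intent of the surrounding text.
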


\begin{rem}
    The cocycles $C\ast \tau$ and $\tau'$ belong to the same cohomology class in $H^1(G,T')$ if and only if there there exists an element $d\in T'$ such that for all $g\in G$, it holds $(C\ast\tau-\tau')(g)=\rho'(g)d-d$. Conjugation by the affinity $\alpha(x)=Cx+d$ induces isomorphisms:
    \[\begin{tikzcd}
	 	0 \arrow{r} & \Lambda \arrow{d}{C} \arrow{r} & \pi_1^{\orb}(T,G) \arrow{d}{\con_{\alpha}}\arrow{r} & G \arrow{r}\arrow{d}{\varphi_C} & 1\\
	 	0 \arrow{r} & \Lambda'\arrow{r} & \pi_1^{\orb}(T',G)\arrow{r} & G \arrow{r} & 1
	 \end{tikzcd}\]
    Conversely, every isomorphism of the orbifold fundamental groups is given by conjugation with an affinity yielding $C$ and $d$ as above.
\end{rem}

In the special case where $\rho=\rho'$  and  $T=T'$, the sets $\mathcal N_{\mathbb R}(\Lambda,\Lambda)$ and $\mathcal N_{\mathbb C}(\Lambda,\Lambda)$ are the normalizers of $\im(\rho_{\mathbb R})$ in  the group of linear diffeomorphism or biholomorphisms of $T$. For simplicity, we denote them by $\mathcal N_{\mathbb R}(\Lambda)$ and $\mathcal N_{\mathbb C}(\Lambda)$. 
They  act on $H^1(G,T)$ by $C \ast \tau$. The quotients corresponding to $\tau$ and $\tau'$ are homeomorphic (or biholomorphic) if and only if they belong to the same orbit under this action.\\

Next, we collect some notions and tools from deformation theory, which are necessary to study \emph{rigid} torus quotients, the main objects of this article.

\begin{definition}\label{rigidity}
	Let $X$ be a compact complex space.
	\begin{enumerate}
		\item A \textit{deformation} of $X$ consists of the following data:
		\begin{itemize}
			\item a flat and proper holomorphic map  $\pi \colon \mathfrak X \to B$ of connected complex spaces,
			\item a point $0 \in B$,
			\item an isomorphism $\pi^{-1}(\{0\}) \simeq X$. 
		\end{itemize}
		 \item We call $X$ \textit{(locally) rigid} if for every deformation  $\pi \colon \mathfrak X \to B$ of $X$, there is an open neighborhood $U \subset B$ of $0$ such that $X\simeq \pi^{-1}(t)$ for all $t\in U$.
		\item We call $X$ \textit{infinitesimally rigid} if $\Ext^1(\Omega^1_X,\sO_X) = 0$.
	\end{enumerate}
\end{definition}

\begin{rem}
	If $X$ has dimension at least 3 and only isolated quotient singularities, then the sheaf $\SheafExt^1(\Omega^1_X,\sO_X)$ is trivial due to a result of Schlessinger \cite{schlessinger}. The short-term exact sequence of the local-to-global $\Ext$ spectral sequence gives us therefore an isomorphism
	\[H^1(X,\Theta_X)\simeq \Ext^1(\Omega_X^1,\sO_X),\]
%	\[0\rightarrow H^1(X,\Theta_X)\rightarrow \Ext^1(\Omega^1_X,\sO_X)\to H^0(X,Ext^1(\Omega^1_X,\sO_X)),\]
	where $\Theta_X=\SheafHom(\Omega_X^1,\sO_X)$ denotes the holomorphic tangent sheaf. In particular, $X$ is infinitesimally rigid if and only if $H^1(X,\Theta_X)$ is trivial in analogy to compact complex manifolds.\\
	If $X=Y/G$ is a quotient of complex manifold $Y$ by an action of a finite group $G$ that is free in codimension 1, then $H^1(X,\Theta_X)=H^1(Y,\Theta_Y)^G$.
\end{rem}

\begin{definition}
	A holomorphic action of a finite group $G$ on a complex manifold $Y$ is called \textit{in\-fini\-tesi\-mally rigid} if
	\[H^1(Y,\Theta_Y)^G=0.\]
\end{definition}

It is known that every infinitesimally rigid compact complex space is (locally) rigid. The converse does not hold in general, even if we restrict to manifolds. However, in the situation of torus quotients, these two notions coincide:

\begin{prop}[cf. \cite{DG}*{Proposition~2.5, Corollary~2.6}]\label{prop:Rigid}
	Let $X=T/G$ be a torus quotient of dimension at least $3$ by an action with at most isolated fixed points. Then:
 \begin{enumerate}
     \item $X$ is rigid if and only if it is infinitesimally rigid.
     \item $X$ is infinitesimally rigid if and only if the analytic representation $\rho$ and its complex conjugate $\overline{\rho}$ don't have any subrepresentations in common.
 \end{enumerate}
\end{prop}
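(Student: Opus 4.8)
The plan is to prove the two items separately: item (2) by an explicit $G$-equivariant computation of $H^1(T,\Theta_T)$, and item (1) by combining this with the unobstructedness of torus deformations, arguing the nontrivial direction by contraposition.

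For item (2), I would start from the identification $H^1(X,\Theta_X)\cong H^1(T,\Theta_T)^G$ recorded above, valid since the action is free in codimension $1$. The holomorphic tangent bundle of a complex torus is trivial, $\Theta_T\cong\sO_T\otimes_\CC V$, where $V=\CC^n$ is the tangent space at the origin, on which $G$ acts through the analytic representation $\rho$. Hence
\[ H^1(T,\Theta_T)\cong H^1(T,\sO_T)\otimes_\CC V \]
as $G$-modules. By Hodge theory on the torus, $H^1(T,\sO_T)\cong\overline{H^0(T,\Omega^1_T)}$, and since $H^0(T,\Omega^1_T)\cong V^\vee$ carries the contragredient representation $\rho^\vee\cong\overline\rho$ (using that $\rho$ may be taken unitary), the space $H^1(T,\sO_T)$ carries $\overline{\overline\rho}\cong\rho$. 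Therefore $H^1(T,\Theta_T)$ carries the representation $\rho\otimes\rho$, and
\[ H^1(X,\Theta_X)\cong\bigl(\rho\otimes\rho\bigr)^G\cong\Hom_G(\overline\rho,\rho). \]
By Schur's lemma this is nonzero precisely when $\rho$ and $\overline\rho$ admit a common irreducible constituent, that is a common subrepresentation, which is item (2).

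For item (1), one implication is the general fact recalled in the text that every infinitesimally rigid compact complex space is locally rigid; it remains to show that a \emph{rigid} torus quotient is infinitesimally rigid. I would argue by contraposition, assuming $H^1(X,\Theta_X)\neq 0$ and producing a nontrivial deformation. Since $\dim X\geq 3$ with isolated quotient singularities, Schlessinger's rigidity gives $\SheafExt^1(\Omega^1_X,\sO_X)=0$, so all first-order deformations are global, parametrized by $\Ext^1(\Omega^1_X,\sO_X)\cong H^1(X,\Theta_X)=H^1(T,\Theta_T)^G$. These invariant classes are exactly the $G$-equivariant infinitesimal deformations of the complex structure of $T$. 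As complex tori have unobstructed deformations — the Kuranishi family being the full $H^1(T,\Theta_T)$, realized by varying the complex structure $J$ on the fixed real torus $(\RR^{2n},\Lambda)$ — the invariant part integrates to a smooth family of $G$-equivariant deformations of $T$ whose quotients form a deformation of $X$ with Kodaira--Spencer map surjecting onto $H^1(X,\Theta_X)$.

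The main obstacle is to show this family is genuinely nontrivial rather than isotrivial. By Proposition~\ref{prop:ConsBieb}, any biholomorphism between two quotients of this type lifts to an affine isomorphism $\alpha(x)=Cx+d$ of the underlying tori with $C$ preserving the lattice and normalizing the action; in an equivariant deformation, where $\Lambda$ and $\rho_\RR$ are fixed and only $J$ varies, such a $C$ lies in $\sN_\RR(\Lambda)$, which is discrete since $C\Lambda=\Lambda$ forces $C$ to be integral in a lattice basis. For a fixed such $C$ the intertwining relation $J'=CJ_0C^{-1}$ determines $J'$, so the set of deformation parameters whose quotient is biholomorphic to $X$ is the countable set $\{CJ_0C^{-1}\}$ and hence discrete. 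Since $H^1(X,\Theta_X)\neq 0$ the family is positive-dimensional, so a general small deformation yields a quotient not biholomorphic to $X$; therefore $X$ is not rigid, which completes the contrapositive. I expect this effectivity step — identifying equivariant isomorphisms with the discrete normalizer and concluding that nearby invariant complex structures give pairwise non-biholomorphic quotients — to be the delicate point, whereas the cohomology computation in item (2) and the unobstructedness of torus deformations are comparatively routine.
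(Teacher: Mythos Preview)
The paper does not prove this proposition; it is quoted from \cite{DG} without argument, so there is no in-paper proof to compare against. Your proposal is, however, a correct and self-contained proof.

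For item~(2), your identification $H^1(X,\Theta_X)\cong (H^1(T,\sO_T)\otimes V)^G\cong \Hom_G(\overline{\rho},\rho)$ is the standard computation and is carried out cleanly. For item~(1), the nontrivial direction (rigid $\Rightarrow$ infinitesimally rigid) is where the content lies, and your strategy---integrate an invariant first-order deformation using the unobstructedness of tori, then invoke Proposition~\ref{prop:ConsBieb} to see that the biholomorphism class of the quotient can repeat only along a countable (indeed discrete) subset of the positive-dimensional base---is exactly the kind of Bieberbach-type argument one expects in this context and is the approach taken in \cite{DG}. Two small points worth tightening when you write it out: (i) make explicit that the family $\mathcal{T}/G\to B$ is flat, which follows since $\sO_{\mathcal{T}/G}=(\sO_{\mathcal{T}})^G$ is a direct summand of a $B$-flat sheaf via the Reynolds operator; and (ii) when you say $C\in\sN_\RR(\Lambda)$, note that for a \emph{biholomorphism} the linear part $C$ is $\CC$-linear for both source and target complex structures, so $J'=CJ_0C^{-1}$ with $C\in\GL(\Lambda)$, and it is the countability of $\GL(\Lambda)\cong\GL(2n,\ZZ)$ (rather than discreteness per se) that forces the complement to be dense in a positive-dimensional analytic base. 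With these clarifications your argument is complete.
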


\begin{rem}
	Due to \cite{CD20}, any complex torus quotient has an algebraic approximation. In particular, rigid torus quotients are projective.
\end{rem}

\begin{rem}
	Let $f\colon \hat{X}\to X$ be a resolution of $X=T/G$. Since quotient singularities are rational, the irregularities
	\[q_i(\hat{X})=h^i(\hat{X},\sO_{\hat{X}})\qquad\mathrm{and}\qquad q_i(X)=h^i(X,\sO_X)\] 
	coincide. Let $\chi$ be the character of the analytic representation $\rho$. 
 As $H^0(\hat{X},\Omega^i_{\hat{X}})\simeq H^0(T,\Omega_T)^G$, we can compute the irregularities $q_i$ as follows:
	\[q_i(X)=q_i(\hat{X})=\dim_\CC(H^0(T,\Omega^i_T)^G)=\langle \wedge^i(\overline{\chi}),\chi_{triv}\rangle.\]
Using the formula  $\chi^2=\wedge^2(\chi)+\Sym^2(\chi)$ and $\langle \chi, \overline{\chi} \rangle =0$ from the rigidity of the action, we conclude that  $q_1=q_2=0$.

\end{rem}

% ************************************************

\section{Classification of the Groups}\label{sec:classgroup}

This section is devoted to the classification of all finite groups  $G$ acting holomorphically with isolated fixed points on a complex torus $T$ of dimension $3$ such that $X=T/G$ is rigid with canonical singularities. We assume that the action is translation free, i.e., it has a faithful linear part
\[
    \rho\colon G\longhookrightarrow\GL(3,\CC).
\]
During the classification process, we will frequently make use of two basic observations:
\begin{Remark} Given an action of a finite group $G$ as above, then:
    \begin{enumerate}
        \item The restriction to every subgroup $U$ shares the same properties apart from the rigidity of the quotient $T/U$.
        \item For all $g\in G$, it holds:
        \begin{itemize}
            \item If $g$ acts freely, then $1$ is an eigenvalue of $\rho(g)$.
            \item If $g$  has fixed points and order $d$, then all the eigenvalues of $\rho(g)$ must be primitive $d$-th roots of unity since otherwise, the fixed locus of some power of $g$ has positive dimension.
        \end{itemize}     
    \end{enumerate}
\end{Remark}

From now on, we fix a finite group $G$ and assume that it admits an action with the above properties. First, we determine the possible orders of the elements in such a group.

\begin{lemma}\label{le:orders}
	Let $g\in G$ be a non-trivial element.
	\begin{enumerate}
		\item If $g$ acts freely on $T$, then $\ord(g)\in \{ 2,\ 3,\ 4,\ 5,\ 6,\ 8,\ 10,\ 12\}$.
		\item If $g$ acts with fixed points, then $\ord(g)\in\{2,\ 3,\ 4,\ 6,\ 7,\ 9,\ 14,\ 18\}$.
	\end{enumerate}
	In particular, $\ord(g)\in\{2,\ldots,10,12,14,18\}$, and elements of order $7,9,14,18$ always have fixed points and elements of order $5,8,10,12$ always act freely.
\end{lemma}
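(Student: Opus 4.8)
The plan is to derive the bound using only the hypotheses that $T$ is a three-dimensional complex torus on which $G$ acts translation-freely with isolated fixed points; remarkably, neither the canonicity of $X$ nor the rigidity of the action enters here. Fix a non-trivial $g\in G$, put $d=\ord(g)$, and restrict everything to the cyclic group $\langle g\rangle\cong\ZZ_d$. The key object is the six-dimensional complex representation $W:=(\rho\oplus\overline\rho)\big|_{\langle g\rangle}$. Since $\rho\oplus\overline\rho$ is carried by the $G$-lattice $\Lambda$ — it is the complexification of the integral representation $G\to\GL(\Lambda)$ — it is defined over $\QQ$, and so is $W$; in particular the character of $W$ is $\QQ$-valued. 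Because $\Gal(\QQ(\zeta_d)/\QQ)$ permutes the $\varphi(e)$ characters of $\langle g\rangle$ of any fixed order $e\mid d$ transitively, all of them must occur in $W$ with one and the same multiplicity $m_e\ge 0$, and summing dimensions gives $\sum_{e\mid d}m_e\,\varphi(e)=\dim_\CC W=6$. On the other hand, the six constituents of $W$ (counted with multiplicity) are precisely the $\langle g\rangle$-characters afforded by the three eigenvalues of $\rho(g)$ together with their complex conjugates, whose orders are controlled by the Remark preceding the lemma.

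For part (2): if $g$ has fixed points, that Remark forces all three eigenvalues of $\rho(g)$ to be primitive $d$-th roots of unity, so every constituent of $W$ has order exactly $d$. Hence $m_e=0$ for $e<d$ and $m_d\,\varphi(d)=6$; since $3$ is not a value of Euler's totient, $\varphi(d)\in\{1,2,6\}$, i.e.\ $d\in\varphi^{-1}(\{1,2,6\})=\{1,2,3,4,6,7,9,14,18\}$, and $d\ge2$ because $g\ne1$. This is exactly the asserted list.

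For part (1): if $g$ acts freely, the Remark gives that $1$ is an eigenvalue, say $\rho(g)=\diag(1,\lambda_2,\lambda_3)$ with $e_i:=\ord(\lambda_i)$ and $d=\lcm(e_2,e_3)\ge2$. The trivial character occurs in $W$ with multiplicity at least $2$ (from the eigenvalue $1$ in $\rho$ and in $\overline\rho$), while the rational-irreducible summand of $W$ whose complexification contains $\chi_{\lambda_i}$ occurs in $W$ and has complex dimension $\varphi(e_i)$. Comparing dimensions with $\dim_\CC W=6$ gives $\varphi(e_2),\varphi(e_3)\le4$ in all cases, and $\varphi(e_2),\varphi(e_3)\le2$ whenever $e_2\ne e_3$. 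Thus, if $e_2=e_3$ then $d=e_2\in\varphi^{-1}(\{1,2,4\})=\{1,2,3,4,5,6,8,10,12\}$; and if $e_2\ne e_3$ then $e_2,e_3\in\varphi^{-1}(\{1,2\})=\{1,2,3,4,6\}$, so $d=\lcm(e_2,e_3)\in\{1,2,3,4,6,12\}$. Together with $d\ge2$, both cases leave $d\in\{2,3,4,5,6,8,10,12\}$. The final ``in particular'' clause is then immediate: it is the union of the two lists, and the orders $7,9,14,18$ are absent from (1), while $5,8,10,12$ are absent from (2).

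The step I expect to require the most care is the bookkeeping in part (1): one must keep track of whether $e_2=e_3$ and whether one of the $\lambda_i$ equals $1$, since these change how much of the six-dimensional space $W$ is occupied by the trivial character and by full Galois orbits. Everything else reduces to the elementary list of the small values of Euler's totient (no integer has totient $3$ or $5$). I would also take care to spell out the standard fact that $\rho\oplus\overline\rho$ is realized on the $G$-lattice $\Lambda$, hence is a rational representation that remains rational after restriction to any cyclic subgroup — this is precisely what legitimizes the Galois-transitivity argument and, consequently, the identity $\sum_{e\mid d}m_e\,\varphi(e)=6$.
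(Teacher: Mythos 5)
Your argument is correct and follows essentially the same route as the paper: eigenvalue $1$ in the free case versus all eigenvalues primitive in the fixed-point case, followed by totient bounds coming from the rationality of $\rho\oplus\overline{\rho}$ on the lattice. The only difference is that where the paper cites \cite{BGL}*{Proposition~3.1} (or \cite{Dem}*{Lemma~3.1.6}) for the bounds $\varphi(d)\mid 6$, $\varphi(d)\le 4$, and $\varphi(d_1)+\varphi(d_2)\le 4$, you reprove them directly via the Galois-orbit counting in the six-dimensional rational representation, which is exactly the content of the cited results.
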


\begin{proof}
    Let $d:=\ord(g)$. Assume first that $g$ acts freely. Then $\rho(g)$ has eigenvalue 1. If the other two eigenvalues have the same order, then $\varphi(d)\leq 4$, where $\varphi$ denotes the Euler totient function. Otherwise, the orders $d_1, d_2$ of the other two eigenvalues fulfill $\varphi(d_1)+\varphi(d_2)\leq 4$ (cf. \cite{BGL}*{Proposition~3.1} or \cite{Dem}*{Lemma~3.1.6}) and $d=\lcm(d_1,d_2)$. It is now easy to determine all possible values for $d$.\\
    If $g$ acts with fixed point, then all its eigenvalues are primitive $d$-th roots of unity. In this case, $\varphi(d)$ divides $6$ by the same proposition. This implies $d\in\{2,3,4,6,7,9,14,18\}$.
\end{proof}

\begin{lemma}\label{le:PGroups}
	Assume that $G$ contains an abelian subgroup $U$ such that every element in $U$ acts non-freely. Then $U$ is cyclic.
\end{lemma}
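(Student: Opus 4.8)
The plan is to exploit the simultaneous diagonalizability of an abelian group of matrices together with the eigenvalue constraint recorded in the basic Remark preceding Lemma~\ref{le:orders}. Since $U$ is finite and abelian, the restriction $\rho|_U$ decomposes as a direct sum of characters, $\rho|_U\cong\chi_1\oplus\chi_2\oplus\chi_3$ with $\chi_i\colon U\to\CC^\times$. The faithfulness of $\rho$ is not even needed here; what matters is the hypothesis on fixed points.

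First I would observe that for every non-trivial $u\in U$, the element $u$ acts with fixed points, so by the Remark all three eigenvalues $\chi_1(u),\chi_2(u),\chi_3(u)$ are \emph{primitive} $\ord(u)$-th roots of unity. In particular, since $\ord(u)>1$, none of these values equals $1$; that is, $\chi_i(u)\neq 1$. As this holds for all $u\neq 1$ and trivially $\chi_i(1)=1$, each character $\chi_i\colon U\to\CC^\times$ has trivial kernel, hence is injective.

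It then follows immediately that $U$ is isomorphic to the finite subgroup $\chi_1(U)$ of $\CC^\times$. Every finite subgroup of the multiplicative group of a field is cyclic, so $\chi_1(U)$, and therefore $U$, is cyclic.

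There is no serious obstacle in this argument; the only point requiring a little care is the logical scope of the hypothesis, namely that \emph{every} non-trivial element of $U$ (not merely a generating set) acts non-freely, which is exactly what is needed to conclude that each $\chi_i$ is injective on all of $U$. One should also note explicitly that the statement is vacuous, or rather trivial, when $U$ is itself trivial.
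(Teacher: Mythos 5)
Your proposal is correct and follows essentially the same route as the paper: decompose $\rho|_U$ into three characters and note that each must be injective because no non-trivial element of $U$ can have $1$ as an eigenvalue, whence $U$ embeds into $\CC^\times$ and is cyclic. The only difference is that you spell out the injectivity via the primitive-root-of-unity observation, which the paper leaves implicit.
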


\begin{proof}
	Since $U$ is abelian, we can assume that $\rho$ restricted to $U$ is the direct sum of three 1-dimensional representations. Each of them must be faithful because the identity is the only element having eigenvalue one. Hence, $U$ is cyclic.
\end{proof}

\begin{cor}\label{cor:7groupsCyclic}
	If $G$ has a $7$-Sylow subgroup $S_7$, then $S_7$ is cyclic of order $7$.
\end{cor}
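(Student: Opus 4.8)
The plan is to establish two things in turn: that $S_7$ is abelian, and then that it is cyclic of exponent $7$. For the first, I would restrict the faithful analytic representation $\rho\colon G\hookrightarrow\GL(3,\CC)$ to $S_7$. Being a $7$-group, $S_7$ has all of its complex irreducible representations of dimension a power of $7$; since $\dim\rho=3<7$, every irreducible constituent of $\rho|_{S_7}$ is one-dimensional. Therefore $\rho|_{S_7}$ is a sum of three characters, its image lies in the diagonal torus $(\CC^\ast)^3\subset\GL(3,\CC)$, and faithfulness of $\rho$ forces $S_7$ to be abelian.

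For the second step, I would invoke Lemma~\ref{le:orders}: the possible orders of elements of $G$ lie in $\{2,\ldots,10,12,14,18\}$, so $G$ has no element whose order is a power of $7$ exceeding $7$ itself. Hence $S_7$ has exponent $7$, i.e. every non-trivial element has order exactly $7$, and by Lemma~\ref{le:orders}(2) each such element acts on $T$ with fixed points, hence non-freely. Thus $S_7$ is an abelian subgroup of $G$ all of whose non-trivial elements act non-freely, so Lemma~\ref{le:PGroups} applies and shows that $S_7$ is cyclic; a cyclic $7$-group of exponent $7$ has order $7$, whence $S_7\cong\ZZ_7$ whenever it is non-trivial.

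This argument is genuinely short and I do not expect a real obstacle; the only point deserving a word of care is the hypothesis of Lemma~\ref{le:PGroups}, where one must verify that \emph{all non-trivial} elements of $S_7$ act non-freely, which is exactly what the combination of exponent $7$ and Lemma~\ref{le:orders}(2) delivers. As an alternative that avoids representation theory altogether, one may use that for the odd prime $7$ a non-cyclic $7$-group always contains a subgroup isomorphic to $\ZZ_7\times\ZZ_7$; applying Lemma~\ref{le:PGroups} to that abelian subgroup produces the same contradiction, and again forces $S_7\cong\ZZ_7$.
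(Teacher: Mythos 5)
Your proof is correct; both of your checks (that $S_7$ is abelian, and that every non-trivial element acts non-freely so that Lemma~\ref{le:PGroups} applies) are the right ones, and the order bound from Lemma~\ref{le:orders} correctly pins the order down to $7$. The route differs mildly from the paper's: the paper never proves that $S_7$ itself is abelian. Instead it observes that, by Sylow's theorem, it suffices to exclude a subgroup $U$ of order $7^2$; such a $U$ is automatically abelian (any group of order $p^2$ is), is not cyclic by the element-order bound of Lemma~\ref{le:orders}, hence is $\ZZ_7\times\ZZ_7$ with every non-trivial element acting with fixed points, contradicting Lemma~\ref{le:PGroups}. This is essentially the ``alternative'' you sketch at the end, except that the containment of $\ZZ_7\times\ZZ_7$ is obtained from Sylow's theorem rather than from the structure theory of non-cyclic $p$-groups for odd $p$. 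Your main route instead deduces abelianness of the full Sylow subgroup from the fact that irreducible representations of a $7$-group have dimension a power of $7$, so the faithful $3$-dimensional $\rho|_{S_7}$ splits into characters. What your version buys is a statement about $S_7$ directly without passing to a subgroup; what the paper's version buys is the avoidance of any representation theory beyond the already-established Lemma~\ref{le:PGroups}, at the cost of one extra appeal to Sylow. Either is acceptable.
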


\begin{proof}
	By Sylow's theorem, it suffices to exclude that $G$ has a subgroup $U$ of order $7^2$. Lemma~\ref{le:orders} ensures that $U$ is not cyclic and every element acts with fixed points. Thus, $U\simeq\ZZ_7^2$, which contradicts Lemma~\ref{le:PGroups}.
\end{proof}

Now, we are ready to determine the possible non-trivial stabilizer groups and the corresponding singularities of the quotient. It turns out that all of them are cyclic. 

\begin{theo}\label{theo:Singularities}
	For all $p\in T$, the stabilizer group $\Stab(p)$ is cyclic of order $1,2,3,4,6,7,9$ or $14$.\\
	In particular, the quotient $X$ has only isolated cyclic quotient singularities. The possible types are $\tfrac{1}{d}(1,1,d-1)$, where $d=2,3,4,6$, and $\tfrac{1}{3}(1,1,1)$, $\tfrac{1}{7}(1,2,4)$, $\tfrac{1}{9}(1,4,7)$ and $\tfrac{1}{14}(1,9,11)$.	
\end{theo}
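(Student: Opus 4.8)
The plan is to analyze stabilizer groups $H=\Stab(p)$ one at a time, exploiting that $H$ acts on the tangent space $T_pT\cong\CC^3$ via a faithful unitary representation $\rho|_H$, with the additional constraint from the previous Remark that every non-trivial $h\in H$ has all eigenvalues primitive $\ord(h)$-th roots of unity (otherwise some power of $h$ would fix a positive-dimensional subtorus, contradicting the isolated fixed locus assumption). First I would observe that $H$ must be abelian: if $h_1,h_2\in H$ are non-trivial, then each $\rho(h_i)$ is, up to a common unitary change of basis, a direct sum of three faithful one-dimensional characters (by the eigenvalue condition), but two commuting diagonalizable matrices are simultaneously diagonalizable; the genuine content is that a non-abelian $H$ would have a non-central element whose eigenvalues cannot all be primitive of the right order — so I would instead argue directly via Lemma~\ref{le:PGroups}: every element of $H$ acts non-freely (it fixes $p$), hence any abelian subgroup of $H$ is cyclic, and then one shows $H$ itself is abelian because a minimal non-abelian group contains an abelian subgroup that is non-cyclic or forces a bad eigenvalue configuration. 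Once $H$ is abelian, Lemma~\ref{le:PGroups} immediately gives that $H$ is cyclic.

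Next I would pin down $\ord(H)=d$. Since $H$ is cyclic and acts with fixed points, by Lemma~\ref{le:orders}(2) we have $d\in\{2,3,4,6,7,9,14,18\}$, so the only thing left is to rule out $d=18$. For this I would take a generator $h$ with $\rho(h)=\diag(\ze_{18}^{a_1},\ze_{18}^{a_2},\ze_{18}^{a_3})$ where each $a_i$ is coprime to $18$; the set of residues coprime to $18$ is $\{1,5,7,11,13,17\}$, i.e.\ $\{\pm1,\pm5,\pm7\}$. The condition that $X$ have canonical (here: isolated canonical cyclic quotient) singularities at the image of $p$ is a strong numerical restriction on the triple $(a_1,a_2,a_3)$ mod $18$ — this is exactly where Morrison's classification (invoked in the introduction) or, more elementarily, the Reid–Tai criterion enters: for all $0<k<18$ one needs $\sum_i \{ka_i/18\}\ge 1$. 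I expect a short case check over the (few, up to the obvious symmetries $a_i\mapsto -a_i$ simultaneously and permutation) triples to show no $d=18$ triple satisfies Reid–Tai, excluding order $18$. This is the step I expect to be the main obstacle: it is not deep but requires care to organize the finitely many eigenvalue triples and verify the Reid–Tai inequalities without error; the paper's reliance on MAGMA suggests the authors also found it worth a machine check.

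Finally, having reduced to $d\in\{2,3,4,6,7,9,14\}$, I would for each such $d$ enumerate the admissible eigenvalue triples (primitive $d$-th roots, satisfying Reid–Tai for canonicity) up to coordinate permutation and simultaneous inversion, and read off the singularity types. For $d\in\{2,3,4,6\}$ the Reid–Tai/Morrison constraint in dimension $3$ with all eigenvalues primitive forces, up to permutation, the type $\tfrac1d(1,1,d-1)$ (together with the extra isolated type $\tfrac13(1,1,1)$ when $d=3$, where all exponents equal); for $d=7$ one gets $\tfrac17(1,2,4)$, for $d=9$ one gets $\tfrac19(1,4,7)$, and for $d=14$ one gets $\tfrac1{14}(1,9,11)$ — in each of these last three cases the canonical/isolated condition is rigid enough to leave a single triple up to the symmetries. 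This yields exactly the list in the statement. I would close by noting that since all stabilizers are cyclic and all fixed points are isolated, $X$ has only isolated cyclic quotient singularities of the asserted types, which is the "in particular" clause.
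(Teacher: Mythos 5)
Your overall skeleton matches the paper's: show the stabilizer $H$ is abelian, deduce cyclicity from Lemma~\ref{le:PGroups} (every element of $H$ fixes $p$, hence acts non-freely), bound the order via Lemma~\ref{le:orders}, and then read off the singularity types from a classification of isolated canonical cyclic quotient singularities. However, there are two genuine gaps. The first is the abelianness of $H$, which is where the paper does most of its work and which you dispatch with the assertion that ``a minimal non-abelian group contains an abelian subgroup that is non-cyclic or forces a bad eigenvalue configuration.'' The first horn of this dichotomy is not available in general: there are non-abelian groups all of whose abelian subgroups are cyclic, and one of them, $\ZZ_7\rtimes\ZZ_3$, satisfies every order constraint from Lemma~\ref{le:orders} and so must be excluded by the second horn. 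That exclusion is a genuine representation-theoretic computation (the paper writes down the unique faithful irreducible $3$-dimensional representation and observes that the order-$3$ generator, acting as a cyclic permutation of the three eigenlines, has eigenvalue $1$, contradicting isolatedness), and similar explicit arguments are needed for the order-$p^3$ groups ($\He(3)$, $\ZZ_9\rtimes\ZZ_3$, the non-abelian $2$-groups) and for $\sA_4$-type groups. None of this is carried out in your proposal, and it is the main content of the proof --- not the exclusion of order $18$, which you flag as the main obstacle but which the paper handles in one line.

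The second gap is that canonicity (Reid--Tai/Morrison) together with isolatedness and primitivity of the eigenvalues does \emph{not} suffice to pin down the types, nor to exclude $d=18$. For example, $\tfrac{1}{7}(1,2,5)$, $\tfrac{1}{9}(1,4,5)$ and $\tfrac{1}{18}(1,1,17)$ are all \emph{terminal} (hence canonical) isolated cyclic quotient singularities whose weights are coprime to the order, so they pass every test you impose; in particular your claimed conclusion that ``no $d=18$ triple satisfies Reid--Tai'' is false for $\tfrac{1}{18}(1,1,17)$, and for $d=7,9,14$ the canonical condition is not ``rigid enough to leave a single triple.'' What actually excludes these is an arithmetic constraint specific to torus automorphisms: $\rho(g)$ acts on the rank-$6$ lattice $\Lambda$, so the characteristic polynomial of $\rho(g)\oplus\overline{\rho}(g)$ has integer coefficients, which forces the function $\zeta\mapsto\mult(\zeta)+\mult(\overline{\zeta})$ to be constant on primitive $d$-th roots of unity. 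This is the ingredient the paper imports from \cite{Dem}*{Lemma~3.1.6}: it kills $d=18$ (a terminal triple $\tfrac{1}{m}(1,a,m-a)$ contains a conjugate pair, forcing $\varphi(m)\le 3$, while $\varphi(18)=6$), restricts the terminal types to $d\in\{2,3,4,6\}$, and singles out $\tfrac{1}{7}(1,2,4)$, $\tfrac{1}{9}(1,4,7)$ and $\tfrac{1}{14}(1,9,11)$ among the canonical candidates of those orders. Without it your final enumeration would produce extra types that do not appear in the statement.
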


\begin{proof}
	Let $p\in T$ be a point with non-trivial stabilizer $H:=\Stab(p)$. Moving the origin of $T$, we may assume that  $H$ acts linearly. 
    %Note that the Lemmata~\ref{le:orders} and \ref{le:PGroups} and Corollary~\ref{cor:7groupsCyclic} also hold for all subgroups of $G$.
    In particular, every element of $H$ has $0\in T$ as fixed point. First, we prove that $H$ is cyclic. By Lemma~\ref{le:PGroups}, it is enough to show that $H$ is abelian.\\ 
    We start with summing some relevant properties of $H$ and its elements:
	\begin{enumerate}[(1)]
		\item For every non-trivial element $g\in H$, the matrix $\rho(g)$ has $1$ not as eigenvalue since the fixed points are assumed to be isolated.
		\item By Lemma~\ref{le:orders}, for all $g\in G$, it holds $\ord(g)\in\{1,\ 2,\ 3,\ 4,\ 6,\ 7,\ 9,\ 14, \ 18 \}$. In particular, $\lvert H\rvert = 2^a\cdot 3^b\cdot 7^c$.
		\item Let $g\in H$ be an element of order $2$. Then $\rho(g)=-\id$.
            In particular, $H$ contains at most one element of order $2$ since $\rho$ is faithful.
		\item By Corollary~\ref{cor:7groupsCyclic}, we have $c\in \{0,1\}$.
	\end{enumerate}
	Next, we analyze the $2$- and $3$-Sylow subgroups of $H$. We claim that they are cyclic of order $2$ or $4$, and $3$ or $9$, respectively (if existend). By Sylow's theorem and Lemma~\ref{le:PGroups}, it is enough to show that $H$ has no subgroups of order $p^3$ for $p=2,3$. Such a subgroup $U$ cannot be cyclic by item (2) and hence not abelian by Lemma~\ref{le:PGroups}. If $p=2$, then the three-dimensional representation $\rho$ restricted to this subgroup has a 1-dimensional subrepresentation, which has to be faithful by item~(1) -- a contradiction. If $p=3$, then $U$ is either $\He(3)$ or $\ZZ_9\rtimes \ZZ_3$. Both of them contain $\ZZ_3^2$ as a subgroup contradicting Lemma~\ref{le:PGroups}. In particular, $a,b\in\{0,1,2\}$.\\
	  Finally, we show that there is no non-abelian group fulfilling all these conditions. Note that if $H$ is not abelian,  the representation $\rho$ need to be irreducible. In particular, $3=\chi_\rho(1)$ has to divide the group order, so $b\neq 0$.
	\begin{itemize}[itemsep=6pt]
		\item \underline{$c=0$}: If $a=0$, then $H$ is  abelian.\\
        If $a=1$, then the $3$-Sylow subgroup is normal due to Sylow's theorems. By item (3), $H$ has only one $2$-Sylow subgroup $\ZZ_2$, which is normal (its generator acts with $-\id$), too.  Hence, $H$ is  abelian.\\
		If $a=2$, then the only groups admitting an irreducible representation of dimension $3$ are $\sA_4$, $\sA_4\times\ZZ_3$ and $\ZZ_2^2\rtimes\ZZ_9$, all of which have more than one element of order $2$.
		\item \underline{$c=1$}: The only groups having at most one element of order $2$ and admitting an irreducible representation of dimension $3$ contain $\ZZ_7\rtimes\ZZ_3$  or $\ZZ_7\rtimes \ZZ_9$ as a subgroup. Thus, we only have to exclude these groups. Up to complex conjugation and equivalence of representations, the only irreducible $3$-dimensional representation of $\ZZ_7\rtimes\ZZ_3=\langle t,s\mid t^7=s^3=1,\:sts^{-1}=t^4\rangle$ is given by
		\[
		s\mapsto \begin{pmatrix}
			0 & 1 & 0 \\ 0 & 0 &1 \\ 1 & 0 &0
		\end{pmatrix},\qquad t\mapsto \begin{pmatrix}
			\ze_7^4 & 0 & 0\\ 0&\ze_7^2 & 0 \\ 0 & 0 & \ze_7
		\end{pmatrix}.
		\]
		But then the matrix of $s$ has eigenvalue $1$. The group $\ZZ_7\rtimes\ZZ_9$ has an element of order $21$.
	\end{itemize}
    Thus, $H$ is cyclic and by Lemma~\ref{le:orders}, its order $m$ belongs to $\{2,3,4,6,7,9,14,18\}$.  By Morrison's classification (cf. \cite{Morrison}), each isolated cyclic canonical quotient singularity is isomorphic to precisely one of the following:
    \begin{itemize}[itemsep=6pt]
        \item $\tfrac{1}{m}(1,a,m-a)$, where $\gcd(m,a)=1$ (terminal)
        \item $\tfrac{1}{m}(1,a,m-a-1)$, where $\gcd(m,a)=\gcd(m,a+1)=1$ (Gorenstein)
        \item $\tfrac{1}{9}(1,4,7)$ or $\tfrac{1}{14}(1,9,11)$.
    \end{itemize}
    The condition $\gcd(m,a)=\gcd(m,a+1)=1$ in the Gorenstein case implies that $m$ is odd. 
    Note that for a linear automorphism $\alpha\in\Aut(T)$ of order $m$ with only primitive $m$-th roots of unity as eigenvalues, the function
    \[\mu_m^\ast\longrightarrow\ZZ,\quad\zeta \mapsto \mult(\zeta)+\mult(\overline{\zeta}),\]
	is constant, where $\mult(\zeta)$ denotes the multiplicity of $\zeta$ as eigenvalue of $\alpha$ and $\mu_m^\ast$ denotes the set of primitive $m$-th roots of unity (cf. \cite{Dem}*{Lemma~3.1.6}).\\
    In the terminal case, each generator of the stabilizer has two eigenvalues that are complex conjugate to each other. Thus, $\varphi(m)\leq 2$ or equivalently $m\in\{2,3,4,6\}$.\\
    Analyzing the remaining cases yields the singularities in the theorem. Note that there is no singularity of order $18$ since $18$ is even and $\varphi(18)=6$.
 \end{proof}

The main result of this section is:

\begin{theo}\label{theo:groups}
    Let $G$ be a finite group acting holomorphically, without translations and isolated fixed points on a complex torus $T$ of dimension $3$ such that $X=T/G$ is rigid with canonical singularities.
	\begin{enumerate}
		\item If $p_g(X)=1$, then $G\simeq \ZZ_7,\:\ZZ_3,\:\ZZ_3^2$ or $\He(3)$.
		\item If $p_g(X)=0$, then $G\simeq \ZZ_9,\:\ZZ_{14},\:\ZZ_3^2,\:\ZZ_3^3$ or $\ZZ_9\rtimes\ZZ_3$.
	\end{enumerate}
\end{theo}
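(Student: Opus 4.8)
The overall strategy is to squeeze $G$ first arithmetically out of the local data of Theorem~\ref{theo:Singularities}, then to cut the candidates down by fixed-point counting and the orbifold Riemann--Roch formula, and finally to decide in each surviving case whether a compatible linear part $\rho$ and translation part exist. We begin with the basic dichotomy. Since $H^0(T,\Omega^3_T)$ is one-dimensional, $p_g(X)\in\{0,1\}$, and $p_g(X)=1$ precisely when $\det\rho$ is the trivial character. For an element $h$ with fixed points, of order $d$ and singularity type $\tfrac1d(a_1,a_2,a_3)$, one has $\det\rho(h)=\zeta_d^{a_1+a_2+a_3}$, which equals $1$ exactly when the singularity is Gorenstein; among the types listed in Theorem~\ref{theo:Singularities} only $\tfrac13(1,1,1)$ and $\tfrac17(1,2,4)$ are Gorenstein. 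Hence if $p_g(X)=1$ every non-trivial stabilizer is $\ZZ_3$ acting by $\zeta_3\cdot\id$ or $\ZZ_7$ acting with eigenvalues $\zeta_7,\zeta_7^2,\zeta_7^4$, while if $p_g(X)=0$ any of the eight types may occur. In both cases the fixed locus is non-empty, since a free action would make $X$ a smooth generalized hyperelliptic threefold, which is never rigid by \cite{DG}*{Theorem~1.1}.

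\emph{Restricting the primes and the Sylow subgroups.} By Lemma~\ref{le:orders} every element order lies in $\{1,2,\dots,10,12,14\}$, so there are no elements of order $11$ or $13$ and $|G|=2^{a}3^{b}5^{c}7^{d}$. Corollary~\ref{cor:7groupsCyclic} gives $d\le1$ with the $7$-Sylow cyclic. A $5$-Sylow subgroup has exponent $5$ (no element of order $25$) and is therefore elementary abelian, and it cannot contain $\ZZ_5^2$: any such subgroup has an element whose $\rho$-image has eigenvalues $\zeta_5,\zeta_5^2,\zeta_5^3$ and thus no eigenvalue $1$, forcing an isolated fixed point with cyclic stabilizer of order $5$, which Theorem~\ref{theo:Singularities} forbids. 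One then excludes $c=1$ altogether, using that a $5$-Sylow $\ZZ_5=\langle s\rangle$ has $\rho(s)=\diag(1,\zeta_5^{a},\zeta_5^{b})$ together with the rigidity of $\rho$ and the counting below (the orders of all singularities being coprime to $5$). Similarly the $2$-Sylow has a unique involution --- an order-$2$ element with fixed points acts by $-\id$ and $\rho$ is faithful --- and larger $2$-groups are incompatible with a rigid canonical quotient, so it is $\ZZ_2$ or trivial; and, exactly as in the proof of Theorem~\ref{theo:Singularities}, a $3$-group admitting a faithful three-dimensional representation of exponent $\le9$ is one of $\ZZ_3,\ZZ_9,\ZZ_3^2,\ZZ_3^3,\He(3),\ZZ_9\rtimes\ZZ_3$, the groups of order $81$ being killed by the eigenvalue and stabilizer constraints. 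Thus the primes dividing $|G|$ lie in $\{2,3,7\}$ with tightly controlled Sylow subgroups.

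\emph{Fixed-point counting, Riemann--Roch and the translation part.} The remaining candidates are pinned down by combining three facts. First, for every $g$ with isolated fixed points, $|\Fix(g)|=|\det(I-\rho(g))|^2$, and $G$ permutes $\Fix(g)$ compatibly with the stabilizer structure --- in particular any element with eigenvalue $1$ acts freely, so its orbits on such a fixed set have size $>1$; for example a normal $7$-Sylow $\langle t\rangle$ has $|\Fix(t)|=7$, on which an order-$3$ element of $G/\langle t\rangle$ would act without fixed points (a fixed point would produce a stabilizer of order $21$), hence with orbits of size $3$ --- impossible --- so that $7\mid|G|$ forces $G\in\{\ZZ_7,\ZZ_{14}\}$, and the same type of argument eliminates the groups of mixed $3$- and $7$-order and those with a non-normal $7$-Sylow. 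Second, since $\omega_X^{[r]}\cong\sO_X$ for $r=\ord(\det\rho)$, the canonical divisor $K_X$ is numerically trivial, so the orbifold Riemann--Roch formula reads
\[
    1-p_g(X)=\chi(\sO_X)=\frac{1}{|G|}\sum_{g\in G}\ \prod_{i=1}^{3}\bigl(1-\lambda_i(g)^{-1}\bigr),
\]
with $\lambda_i(g)$ the eigenvalues of $\rho(g)$; grouping the terms by singularity type and by the pairing $g\leftrightarrow g^{-1}$ turns this into a divisibility constraint on $|G|$ and on the number of singular points of each type. Third --- and this is what rules out borderline groups such as $\ZZ_2\times\ZZ_3^2$ that survive the previous tests --- one must check that there is a cohomology class in $H^1(G,T)$ whose translation part makes every eigenvalue-$1$ element act freely: the cocycle conditions together with the commutation relations of $G$ impose congruences on $\tau$ (for instance, a would-be free involution with $\rho(g)=\diag(1,-1,-1)$ needs a $2$-torsion value in its $1$-eigenspace, which is incompatible with commuting with an order-$3$ element acting non-trivially on that eigenspace), and for several candidate groups no admissible class exists. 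Running through the finitely many prime-power and mixed cases leaves exactly $\ZZ_7,\ZZ_3,\ZZ_3^2,\He(3)$ when $p_g(X)=1$ and $\ZZ_9,\ZZ_{14},\ZZ_3^2,\ZZ_3^3,\ZZ_9\rtimes\ZZ_3$ when $p_g(X)=0$; in each case rigidity and the requirement that the decomplexification of $\rho$ preserve a lattice (equivalently that $\rho\oplus\overline\rho$ has rational character) determine $\rho$ up to equivalence and an automorphism of $G$.

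\emph{The main obstacle.} The substance of the proof is the finite but intricate bookkeeping in the last two steps: excluding the borderline $3$-groups of order $81$, the $\{2,3,7\}$-groups with non-normal $7$-Sylow, and the groups admitting a faithful rigid representation but no compatible translation part, and checking that the eigenvalue, Riemann--Roch, lattice and cohomology conditions are jointly satisfiable only for the stated groups. For the last handful of candidates this is carried out with the MAGMA code referenced after Theorem~\ref{theo:main}. Splitting the outcome according to the value of $p_g(X)$ then gives assertions~(1) and~(2).
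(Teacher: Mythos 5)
Your overall architecture matches the paper's: local analysis of stabilizers, arithmetic constraints on $|G|$, a Riemann--Roch/Lefschetz count, and a finite (partly computer-aided) case check. Your holomorphic-Lefschetz identity $\chi(\sO_X)=\tfrac{1}{|G|}\sum_{g}\prod_{i}\bigl(1-\lambda_i(g)^{-1}\bigr)$ is a legitimate substitute for the paper's orbifold Riemann--Roch with contributions from the terminal baskets, and your orbit-counting exclusion of a common fixed point of an order-$3$ and an order-$7$ element is a nice direct alternative to the paper's element count. (For $p_g=1$ the paper simply quotes Oguiso--Sakurai; your uniform treatment of both cases is fine in principle.)

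There is, however, a genuine gap in your treatment of the $2$-part of $G$. You assert that ``the $2$-Sylow has a unique involution'' because ``an order-$2$ element with fixed points acts by $-\id$'', and conclude that the $2$-Sylow is $\ZZ_2$ or trivial. The premise only controls involutions \emph{with fixed points}; an involution may act freely, with linear part conjugate to $\diag(1,-1,-1)$, so faithfulness of $\rho$ does not force uniqueness of the involution. Moreover, even a unique involution is compatible with cyclic $2$-groups of order $4$ or $8$ (these element orders are allowed for freely acting elements by Lemma~\ref{le:orders}) and with generalized quaternion groups. The phrase ``larger $2$-groups are incompatible with a rigid canonical quotient'' is precisely what has to be proved: the paper must allow $2$-Sylow subgroups of order up to $2^5$ (Lemma~\ref{le:SylowsCaseIII}) and work through candidate groups of order $96$, $56$, $32$, $24$, $16$, $8$ in Proposition~\ref{prop:-id}, using the explicit basket list of Corollary~\ref{cor:BasketsSing} and the two counting formulas of Lemma~\ref{le:Lefschetz} to pin down $|G|$ before the case-by-case exclusion. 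Relatedly, your elimination of the prime $5$ and your passage from the Lefschetz identity to ``a divisibility constraint on $|G|$'' are asserted rather than derived; without the explicit finite list of baskets $[N_2,N_3,N_4,N_6,N_9,N_{14}]$ and the resulting group orders, the ``finitely many cases'' you hand to MAGMA are not actually enumerated, so the reduction to a finite verification is incomplete as written.
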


\begin{Remark}
    We point out that the classification of the groups and the analytic representations in the case $p_g(X)=1$ was already achieved by Oguiso and Sakurai (\cite{OguisoQuotientType}*{Theorem~3.4}). Instead of the rigidity, they assumed  that the action has non-empty (isolated) fixed locus and that the quotients have vanishing irregularities $q_1$ and $q_2$. From their description of the analytic representations, the rigidity follows immediately from Proposition~\ref{prop:Rigid}. Conversely, any rigid action on a 3-dimensional torus has fixed points by \cite{DG}*{Theorem~1.1}.  In the rest of the section, we therefore only need to consider the case $p_g(X)=0$.
\end{Remark}

A first step towards the classification of the groups in the case $p_g(X)=0$ is to analyze the possible baskets of singularities. For this purpose, we make use of a relative version of the orbifold Riemann-Roch formula (cf. \cite{Reid87}), adapted to our setup (cf. \cite{Gle16}*{Section~4.3} for a similar situation). 

\begin{Proposition}\label{prop:CondNumberSing}
	If $p_g(X)=0$, then
	\[
		1=\tfrac{1}{16}N_2+\tfrac{1}{9}N_3+\tfrac{5}{32}N_4+\tfrac{35}{144}N_6+\tfrac{1}{3}N_9+\tfrac{7}{16}N_{14},
	\]
    where $N_d$ denotes the number of singularities of type $\tfrac{1}{d}(1,1,d-1)$ for $d=2,3,4,6$ and $N_9$ and $N_{14}$ the number of singularities of type $\tfrac{1}{9}(1,4,7)$ and $\tfrac{1}{14}(1,9,11)$, respectively.
\end{Proposition}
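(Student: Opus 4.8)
The strategy is to compute the holomorphic Euler characteristic $\chi(\sO_X)=\sum_{i=0}^{3}(-1)^i h^i(X,\sO_X)$ in two different ways. On the one hand, $\chi(\sO_X)=1$: choosing a resolution $f\colon\hat X\to X$ and using that quotient singularities are rational, $h^i(X,\sO_X)=h^i(\hat X,\sO_{\hat X})$ for all $i$; here $h^0=1$ since $X$ is connected and compact, $h^1=q_1=0$ and $h^2=q_2=0$ by the rigidity of the action (cf.\ the Remark following Proposition~\ref{prop:Rigid}), and, by Serre duality on the smooth projective threefold $\hat X$ together with $H^0(\hat X,\Omega^i_{\hat X})\cong H^0(T,\Omega^i_T)^G$,
\[
h^3(\hat X,\sO_{\hat X})=h^0(\hat X,\omega_{\hat X})=\dim_\CC H^0(T,\Omega^3_T)^G=p_g(X)=0
\]
by hypothesis.

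On the other hand, I would compute $\chi(\sO_X)$ by a relative orbifold Riemann--Roch argument (cf.\ \cite{Reid87}), equivalently the Atiyah--Bott holomorphic Lefschetz fixed point formula for the $G$-action on $T$. Since the quotient map $\pi\colon T\to X$ is finite and the order of $G$ is invertible in $\CC$, one has $H^i(X,\sO_X)=H^i(T,\sO_T)^G$, so averaging over $G$ gives
\[
\chi(\sO_X)=\frac1{|G|}\sum_{g\in G}L(g),\qquad L(g):=\sum_i(-1)^i\tr\bigl(g\mid H^i(T,\sO_T)\bigr),
\]
the holomorphic Lefschetz number of $g$ on $T$. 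By the dichotomy recalled at the start of Section~\ref{sec:classgroup}, a non-trivial $g$ either acts freely --- so $\Fix(g)=\emptyset$ and $L(g)=0$ --- or has finite fixed locus with $\rho(g)$ having no eigenvalue $1$, in which case $L(g)=|\Fix(g)|\cdot\det(\Id-\rho(g))^{-1}$; moreover $L(\id)=\chi(\sO_T)=0$. Regrouping the pairs $(p,g)$ with $g\neq\id$ and $p\in\Fix(g)$ by the associated singular point $Q\in\Sing(X)$ --- whose stabilizer is, by Theorem~\ref{theo:Singularities}, cyclic of some order $d$, with a generator acting as $\diag(\zeta^{a_1},\zeta^{a_2},\zeta^{a_3})$ for a primitive $d$-th root of unity $\zeta$ and singularity type $\tfrac1d(a_1,a_2,a_3)$, and whose $G$-orbit has $|G|/d$ points --- one arrives at
\[
\chi(\sO_X)=\sum_{Q\in\Sing(X)}c_Q,\qquad c_Q=\frac1d\sum_{k=1}^{d-1}\frac1{(1-\zeta^{ka_1})(1-\zeta^{ka_2})(1-\zeta^{ka_3})}\in\QQ,
\]
a rational number depending only on the type of $Q$ (it is real, by pairing $k$ with $d-k$, and $\Gal(\QQ(\zeta)/\QQ)$-invariant; whether one uses $\det(\Id-\rho(g))$ or $\det(\Id-\rho(g^{-1}))$ in the fixed point formula is immaterial for the same reason).

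It then remains to evaluate $c_Q$ for the eight types of Theorem~\ref{theo:Singularities} and to substitute $\chi(\sO_X)=1$. For the terminal types $\tfrac1d(1,1,d-1)$ a short direct computation gives $c_Q=\tfrac1{16},\ \tfrac19,\ \tfrac5{32},\ \tfrac{35}{144}$ for $d=2,3,4,6$. For $\tfrac13(1,1,1)$ one gets $c_Q=0$ because $(1-\zeta_3)^3$ is purely imaginary, and for $\tfrac17(1,2,4)$ one gets $c_Q=0$ because, after summing separately over quadratic residues and non-residues modulo $7$, $c_Q$ becomes a multiple of the real part of $(1-\zeta_7)(1-\zeta_7^2)(1-\zeta_7^4)=\pm\sqrt{-7}$. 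For the two sporadic types $\tfrac19(1,4,7)$ and $\tfrac1{14}(1,9,11)$, I would split the defining sum according to the index-$2$ subgroup $\langle4\rangle\le(\ZZ/9)^{\times}$, resp.\ $\langle9\rangle\le(\ZZ/14)^{\times}$: the summands indexed by $k$ with $\gcd(k,d)>1$ reduce to the already-computed contributions of the types $\tfrac13(1,1,1)$, $\tfrac17(1,2,4)$, and $\tfrac12(1,1,1)$, while those with $\gcd(k,d)=1$ collapse into a single cyclotomic term evaluated by elementary Gaussian-period identities (such as $\sum_{j\in(\ZZ/9)^{\times}}\zeta_9^{\,j}=\mu(9)=0$ and $\Phi_{14}(1)=1$); this yields $c_Q=\tfrac13$ and $c_Q=\tfrac7{16}$ respectively. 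Reading off the multiplicities $N_d$, the equality $1=\chi(\sO_X)=\sum_Q c_Q$ is exactly the asserted identity. I expect the main obstacle to be this final step --- above all the evaluation of the two sporadic contributions, and the vanishing $c_Q=0$ for $\tfrac13(1,1,1)$ and $\tfrac17(1,2,4)$, which is precisely why these two types do not occur in the formula; the underlying cyclotomic sums are elementary but a little delicate, and can also be confirmed by machine (cf.\ \cite{MAGMA}).
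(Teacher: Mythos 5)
Your proposal is correct, but it reaches the identity by a genuinely different route than the paper. The paper's proof invokes Reid's orbifold Riemann--Roch formula $\chi(\sO_X)=\tfrac{1}{24}\left(-K_X\cdot c_2(X)+\sum_{x}\tfrac{m_x^2-1}{m_x}\right)$: the intersection term dies because $K_X$ is torsion, the Gorenstein points $\tfrac13(1,1,1)$ and $\tfrac17(1,2,4)$ are discarded because they admit crepant resolutions, the two sporadic types are replaced by their crepant terminalizations ($3\times\tfrac13(1,1,2)$, resp.\ $7\times\tfrac12(1,1,1)$, constructed in Section~\ref{sec:resolution}), and the coefficients are read off as $\tfrac{m^2-1}{24m}$, giving $\tfrac1{16},\tfrac19,\tfrac5{32},\tfrac{35}{144}$ for $m=2,3,4,6$ and hence $3\cdot\tfrac19=\tfrac13$ and $7\cdot\tfrac1{16}=\tfrac7{16}$ for the sporadic types. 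You instead localize on the covering torus, averaging holomorphic Lefschetz numbers over $G$ and regrouping fixed points by their image in $\Sing(X)$, which reduces everything to the cyclotomic sums $c_Q$; I have checked that your stated values of $c_Q$ (including the vanishing for the two Gorenstein types and the values $\tfrac13$ and $\tfrac7{16}$ coming from the identities $(1-\zeta_9)(1-\zeta_9^4)(1-\zeta_9^7)=1-\zeta_3$ and $(1-\zeta_{14})(1-\zeta_{14}^9)(1-\zeta_{14}^{11})=1$) all agree with the paper's coefficients, as they must. What each approach buys: yours is essentially self-contained --- on a torus one even has $L(g)=\overline{\det(\Id-\rho(g))}$ directly from $H^q(T,\sO_T)\cong\wedge^q\overline{V}^{\ast}$, so the full Atiyah--Bott theorem is not strictly needed --- it requires neither $K_X\cdot c_2(X)=0$ nor any terminalization data, and it explains intrinsically why the Gorenstein types are absent from the formula (their local Lefschetz contributions vanish) rather than appealing to the existence of crepant resolutions; the paper's argument, on the other hand, avoids all cyclotomic computation and reuses the explicit terminalizations of $\tfrac19(1,4,7)$ and $\tfrac1{14}(1,9,11)$ that it must construct anyway for Proposition~\ref{prop:Resolutions}.
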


\begin{proof}
    The orbifold Riemann-Roch formula (cf. \cite{Reid87}) reads:
    \[
        \chi(\mathcal{O}_X)=\frac{1}{24}\left(-K_X\cdot c_2(X)+\sum_{x\, ter}\frac{m_x^2-1}{m_x}\right),
    \]
    where the sum runs over all terminal singularities $\tfrac{1}{m_x}(1,a_x,m_x-a_x)$ of a crepant terminalization of $X$, which we obtain by looking locally at each isolated singular point. The Gorenstein singularities have a crepant resolution, so they don't contribute. 
    The crepant terminalization of $\tfrac{1}{9}(1,4,7)$ consists of  three copies of $\tfrac{1}{3}(1,1,2)$ and the one of $\tfrac{1}{14}(1,9,11)$ of seven nodes $\tfrac{1}{2}(1,1,1)$ (cf. Section~\ref{sec:resolution}). Since the remaining singularities are all terminal (cf. Theorem~\ref{theo:Singularities}), we don't need to modify them.
    
    By the rigidity of the action, we have $q_1(X)=q_2(X)=0$, thus, $\chi(\sO_X)=1$. Moreover, the intersection product $K_X\cdot c_2(X)$ is 0  since $\lvert G\rvert\cdot K_X\sim_{lin} 0$.  Hence, the claim follows.
\end{proof}

\begin{Corollary}\label{cor:BasketsSing}
    The candidates for the values of $[N_2,N_3,N_4,N_6,N_9,N_{14}]$ are
    \begin{center}
        \begin{minipage}{0.31\textwidth}
            \begin{tabular}{c|c}
                 $k$ & $[N_2,N_3,N_4,N_6,N_9,N_{14}]$ \\ \hline
                 $1$ & $[0,1,2,1,1,0]$\\
                 $2$ & $[0,4,2,1,0,0]$\\
                 $3$ & $[1,0,6,0,0,0]$ \\
                 $4$ & $[2,0,0,0,0,2]$ \\
                 $5$ & $[4,0,2,0,0,1] $
            \end{tabular}
        \end{minipage}
        \begin{minipage}{0.31\textwidth}
            \begin{tabular}{c|c}
                 $k$ & $[N_2,N_3,N_4,N_6,N_9,N_{14}]$ \\ \hline
                 $6$ & $[5,1,0,1,1,0]$\\
                 $7$ & $[5,4,0,1,0,0] $\\
                 $8$ & $[6,0,4,0,0,0]$ \\
                 $9$ & $[9,0,0,0,0,1]$ \\
                 $10$ & $[11,0,2,0,0,0]$
            \end{tabular}
        \end{minipage}
        \begin{minipage}{0.31\textwidth}
            \begin{tabular}{c|c}
                 $k$ & $[N_2,N_3,N_4,N_6,N_9,N_{14}]$ \\ \hline
                 $11$ & $[16,0,0,0,0,0]$\\
                 $12$ & $ [0,0,0,0,3,0]$\\
                 $13$ & $[0,3,0,0,2,0]$\\
                 $14$ & $[0,6,0,0,1,0]$\\
                 $15$ & $[0,9,0,0,0,0]$
            \end{tabular}
        \end{minipage}
    \end{center}
\end{Corollary}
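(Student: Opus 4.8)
The assertion is purely arithmetic: by Proposition~\ref{prop:CondNumberSing} one must enumerate all tuples of non-negative integers $N_2,N_3,N_4,N_6,N_9,N_{14}$ satisfying the stated rational identity. The plan is to clear denominators, bound each variable, and run a finite enumeration, pruning the case list with a few elementary congruences.

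First I would multiply the identity of Proposition~\ref{prop:CondNumberSing} by $\lcm(16,9,32,144,3,16)=288$, turning it into the equivalent linear Diophantine equation
\[
    18N_2+32N_3+45N_4+70N_6+96N_9+126N_{14}=288.
\]
Since all coefficients are positive, each variable is bounded: $N_{14}\le 2$, $N_9\le 3$, $N_6\le 4$, $N_4\le 6$, $N_3\le 9$, $N_2\le 16$, so the solution set is already finite. To shrink the search I would reduce modulo small integers: modulo $2$ the equation forces $N_4$ to be even, so $N_4\in\{0,2,4,6\}$; modulo $3$ it gives $2N_3+N_6\equiv 0\pmod 3$, i.e.\ $N_6\equiv N_3\pmod 3$; and modulo $9$ it gives $5N_3+7N_6+6N_9\equiv 0\pmod 9$.

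Then I would iterate over the few admissible tuples $(N_{14},N_9,N_6,N_4)$. For each such choice the equation reduces to $18N_2+32N_3=R$ with $R:=288-126N_{14}-96N_9-70N_6-45N_4\ge 0$. As $\gcd(18,32)=2$, there is no solution unless $2\mid R$, and when $2\mid R$ the residue of $N_3$ modulo $9$ is determined; combined with $0\le 32N_3\le R$ and $N_3\le 9$ this leaves at most two candidate values of $N_3$, each of which fixes $N_2=(R-32N_3)/18$, whose integrality and non-negativity one checks directly. Carrying this out — and cross-checking with MAGMA (\cite{MAGMA}), as mentioned in the introduction — produces exactly the $15$ tuples in the table and nothing more. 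There is no conceptual difficulty here; the only real work is organizing the bookkeeping transparently, and the one point to watch is not to confuse pruning with discarding: for every surviving $(N_{14},N_9,N_6,N_4)$ one must record the complete (one- or two-element) solution set of $18N_2+32N_3=R$, not merely the first solution encountered.
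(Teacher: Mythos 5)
Your proposal is correct and matches the paper's (implicit) argument: the paper states the corollary without proof as a direct enumeration of the non-negative integer solutions of the equation in Proposition~\ref{prop:CondNumberSing}, relying on a computer check, and your clearing of denominators to $18N_2+32N_3+45N_4+70N_6+96N_9+126N_{14}=288$, the resulting bounds, and the congruence pruning (including the mod-$9$ determination of $N_3$ and the warning to keep both solutions when $R=288$, which produces rows $11$ and $15$) reproduce exactly the $15$ listed tuples.
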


Next, we want to derive a formula which allows us to compute the order of the group $G$ in terms of the $N_i$.
If the image of the analytic representation $\rho$ contains non-trivial scalar matrices, we can derive such a formula from the Lefschetz fixed-point formula, which has a particularly simple shape on complex tori:

\begin{lemma}[\cite{BL}, Corollary~13.2.4, Proposition~13.2.5]\label{le:Fix}
	 Let $T$ be a complex torus of dimension $n$ and $\alpha\in\Aut(T)$ an automorphism of order $d$ such that $\langle \alpha\rangle$ acts with isolated fixed points. Then:
    \[
        \#\Fix(\alpha)=
            \begin{cases}
                p^{2n/\varphi(d)}, & \makebox{if}\: d=p^s \, \makebox{for some prime}\: p,\\
                1, & \makebox{else}
            \end{cases}.
    \]
\end{lemma}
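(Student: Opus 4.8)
The final statement to prove is Lemma~\ref{le:Fix}, the Lefschetz-type fixed-point count for an order-$d$ automorphism $\alpha$ of an $n$-dimensional complex torus $T$ whose cyclic group acts with isolated fixed points. The plan is to apply the holomorphic Lefschetz fixed-point formula, or more simply the topological Lefschetz formula, since the fixed points are isolated. The number of fixed points equals $L(\alpha)=\sum_{i}(-1)^i\tr\big(\alpha^*\mid H^i(T,\CC)\big)$ up to sign, and because the action has isolated fixed points and is holomorphic, each fixed point contributes $+1$, so $\#\Fix(\alpha)=L(\alpha)$. On a complex torus one has $H^*(T,\CC)=\wedge^* H^1(T,\CC)$ with $H^1(T,\CC)\cong\CC^{2n}$ and $\alpha^*$ acting by a matrix $A\in\GL(2n,\CC)$; hence $L(\alpha)=\det(\id-A)$ by the standard identity $\sum_i(-1)^i\tr(\wedge^i A)=\det(\id-A)$. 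So the whole computation reduces to evaluating $\det(\id-A)$ where $A$ is the decomplexification-type action of $\alpha$ on $H^1$.

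Next I would identify the eigenvalues of $A$. Since $\langle\alpha\rangle$ acts with isolated fixed points, no nontrivial power of $\alpha$ can have $1$ as an eigenvalue on the analytic representation (otherwise the fixed locus of that power would be positive-dimensional), so every eigenvalue of the analytic representation $\rho(\alpha)$ is a primitive $d$-th root of unity. The action on $H^1(T,\CC)$ decomposes as $\rho(\alpha)\oplus\overline{\rho(\alpha)}$, so the eigenvalues of $A$ are exactly the primitive $d$-th roots of unity, each occurring with total multiplicity $2n/\varphi(d)$ (this uses precisely the constancy of $\zeta\mapsto\mult(\zeta)+\mult(\overline\zeta)$ on $\mu_d^*$ quoted from \cite{Dem}*{Lemma~3.1.6}, together with the fact that there are $\varphi(d)$ primitive $d$-th roots of unity and $2n$ eigenvalues in all). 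Therefore
\[
\#\Fix(\alpha)=\det(\id-A)=\prod_{\zeta\in\mu_d^*}(1-\zeta)^{2n/\varphi(d)}=\Big(\prod_{\zeta\in\mu_d^*}(1-\zeta)\Big)^{2n/\varphi(d)}=\Phi_d(1)^{2n/\varphi(d)},
\]
where $\Phi_d$ is the $d$-th cyclotomic polynomial.

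The final step is the elementary number-theoretic evaluation of $\Phi_d(1)$: it is a classical fact that $\Phi_d(1)=p$ if $d=p^s$ is a prime power, and $\Phi_d(1)=1$ otherwise. This follows by taking $x\to1$ in $x^d-1=\prod_{e\mid d}\Phi_e(x)$ and inducting, or directly from $\Phi_{p^s}(x)=1+x^{p^{s-1}}+\cdots+x^{(p-1)p^{s-1}}$ and the multiplicativity relations for composite $d$. Substituting gives exactly the two cases in the statement, since $2n/\varphi(p^s)$ is the exponent and $p^{2n/\varphi(d)}$ the value in the prime-power case, and $1^{2n/\varphi(d)}=1$ otherwise.

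The only genuine subtlety — and the step I would be most careful about — is justifying that each isolated fixed point contributes exactly $+1$ to the Lefschetz number and that the eigenvalues on $H^1$ are genuinely the full set of primitive $d$-th roots of unity with equal multiplicities; both are standard for torus automorphisms (the former because a holomorphic self-map near an isolated fixed point with $\id-d\alpha$ invertible has local index $+1$, the latter because the analytic representation is defined over $\ZZ$ on the lattice, so its characteristic polynomial is a product of cyclotomic polynomials and, all eigenvalues being primitive $d$-th roots of unity, it must be a power of $\Phi_d$). Everything else is the bookkeeping of $\det(\id-A)$ and the value of $\Phi_d(1)$, which I would not spell out in detail. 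This matches the reference \cite{BL}*{Corollary~13.2.4, Proposition~13.2.5}, so I would simply cite it for the precise argument.
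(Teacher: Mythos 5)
Your argument is correct: the reduction of $\#\Fix(\alpha)$ to $\det(\id-\alpha^*|_{H^1})$ via the Lefschetz formula (with each nondegenerate fixed point of a holomorphic map contributing $+1$), the observation that the integrality of the rational representation forces its characteristic polynomial to be $\Phi_d^{2n/\varphi(d)}$, and the evaluation $\Phi_d(1)=p$ or $1$ together give exactly the stated formula. The paper offers no proof of its own here — it cites \cite{BL}*{Corollary~13.2.4, Proposition~13.2.5} — and your argument is essentially the standard one found in that reference, so there is nothing to add.
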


\begin{Lemma}\label{le:Lefschetz}\
    \begin{enumerate}
        \item If $-\id\in\im(\rho)$, then $$2^6 = \lvert G\rvert \cdot (\tfrac{1}{2} N_2 + \tfrac{1}{4} N_4 + \tfrac{1}{6} N_6 + \tfrac{1}{14} N_{14}).$$
        \item If $\ze_3\cdot\id\in\im(\rho)$, then $$3^3=\lvert G\rvert\cdot (\tfrac{1}{3}N_{3,gor}+\tfrac{1}{9}N_9),$$ where $N_{3,gor}$ denotes the number of singularities of type $\tfrac{1}{3}(1,1,1)$.
    \end{enumerate}
\end{Lemma}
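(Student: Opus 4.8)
The plan is to apply the Lefschetz-type fixed-point count on complex tori (Lemma~\ref{le:Fix}) to a well-chosen \emph{central} element of $G$, and then to compute the same number of fixed points a second time by sweeping over the isolated singular points of $X$ via orbit--stabilizer.

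For part (1) I would take the unique element $g_0\in G$ with $\rho(g_0)=-\id$; it exists by hypothesis and is unique, hence central, because $\rho$ is faithful and $-\id$ is central in $\GL(3,\CC)$. Since $\rho(g_0)-\id=-2\,\id$ is invertible, $\langle g_0\rangle$ acts with isolated fixed points and Lemma~\ref{le:Fix} gives $\#\Fix(\Phi(g_0))=2^{2\cdot 3/\varphi(2)}=2^6$. On the other hand, $g_0$ being central makes $\Fix(\Phi(g_0))$ a $G$-stable finite set, hence a union of $G$-orbits, and a point $p$ lies in it exactly when $g_0\in\Stab(p)$. By Theorem~\ref{theo:Singularities} the non-trivial stabilizers are cyclic of order $m\in\{2,3,4,6,7,9,14\}$; such a group has an involution precisely when $m$ is even, and the point I would check case by case — using the explicit weights from Morrison's list in Theorem~\ref{theo:Singularities} — is that for $m\in\{2,4,6,14\}$ this involution is $g_0$ itself, i.e. acts as $-\id$ (for instance, for $\tfrac1{14}(1,9,11)$ one has $h^7=\diag(\ze_{14}^7,\ze_{14}^{63},\ze_{14}^{77})=-\id$). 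Then a singular point of order $m$ contributes $|G|/m$ points to $\Fix(\Phi(g_0))$ when $m\in\{2,4,6,14\}$ and none otherwise; summing over all singular points gives $2^6=|G|\bigl(\tfrac12N_2+\tfrac14N_4+\tfrac16N_6+\tfrac1{14}N_{14}\bigr)$.

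Part (2) has exactly the same shape with $g_0$ replaced by the unique (hence central) $g_1\in G$ with $\rho(g_1)=\ze_3\cdot\id$. Again $\rho(g_1)-\id$ is invertible, so $\langle g_1\rangle$ acts with isolated fixed points and Lemma~\ref{le:Fix} gives $\#\Fix(\Phi(g_1))=3^{2\cdot 3/\varphi(3)}=3^3$. Now $p\in\Fix(\Phi(g_1))$ iff the cyclic group $\Stab(p)=\langle h\rangle$ has a power equal to the scalar $\ze_3\cdot\id$; running through Theorem~\ref{theo:Singularities} I would verify that this happens only for the Gorenstein type $\tfrac13(1,1,1)$ (where one of $\rho(h),\rho(h^2)$ is $\ze_3\cdot\id$) and for $\tfrac19(1,4,7)$ (where $\rho(h^3)=\diag(\ze_9^3,\ze_9^{12},\ze_9^{21})=\ze_3\cdot\id$), and for no other type (in particular not for $\tfrac13(1,1,2)$ or $\tfrac16(1,1,5)$). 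Orbit--stabilizer then yields $3^3=|G|\bigl(\tfrac13N_{3,gor}+\tfrac19N_9\bigr)$, where $N_{3,gor}$ counts precisely the Gorenstein $\tfrac13(1,1,1)$-points that do not enter the formula of Proposition~\ref{prop:CondNumberSing}.

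The only genuinely delicate step is this per-type bookkeeping: one must confirm that the appropriate power of the stabilizer generator equals the scalar in question for the contributing types, and that no power does so for the remaining types, which is where the actual weights from Morrison's classification are needed rather than just the order of the stabilizer. Everything else — faithfulness forcing centrality, invertibility of $\rho(g)-\id$, and the orbit--stabilizer counting — is routine.
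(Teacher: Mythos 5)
Your proposal is correct and follows essentially the same route as the paper: apply the fixed-point count of Lemma~\ref{le:Fix} to the unique (central) element with scalar linear part, identify its fixed locus with the set of points whose cyclic stabilizer contains that element, and count those points fiberwise over the singular locus via orbit--stabilizer. Your per-type bookkeeping (checking which stabilizer types contain $-\id$ resp. $\ze_3\cdot\id$ as a power of the generator) is exactly the verification the paper compresses into the statement that $\Fix(g)$ consists of the points with even-order stabilizer.
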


\begin{proof}
    We only give a proof for the first statement. The reasoning for the second is similar. Let $g\in G$ be the unique element with $\rho(g)=-\id$. The fixed points of $g$ are precisely the elements in $T$ with stabilizer of even order. Thus:
	\[
		\Fix(g)=\bigsqcup_{j\in\{2,4,6,14\}}\{y\in T \mid \Stab(y) \simeq \ZZ_j\}.
	\]
	By Lemma~\ref{le:Fix}, $g$ has $2^6$ fixed points. If $x$ is a singularity of order $j$, then the fiber of $x$ under the projection map $\pi\colon T\to X$ contains $\lvert G\rvert/j$ elements, all having a stabilizer group isomorphic to $\ZZ_j$.
\end{proof}

\begin{Remark}\label{rem:centralElements}
    If there exists an even $j$ such that $N_j\neq 0$, then $G$ has an element $g$ of order $2$ with fixed points. Thus, $\rho(g)=-\id$.\\ 
    Similarly, if $N_9\neq 0$, there is an element $h\in G$ of order $9$ such that $\rho(h)$ is similar to $\diag(\ze_9,\ze_9^4,\ze_9^7)$. Hence, $\rho(h^3)=\ze_3\cdot\id$.\\
    In these cases, we can compute the possible orders of $G$ from the list in Corollary~\ref{cor:BasketsSing} and the above lemma, which results in finitely many possible groups. Not all of them allow a rigid action on $T$ with the properties of Theorem~\ref{theo:groups}, which we recall below:
\end{Remark}

\begin{Notation}\label{not:Standard}
    In the following, we shall say that a group $G$ enjoys the \emph{standard conditions}, if and only if, for all $g\in G$, it holds
	\[
		\ord(g)\in\{1,2,3,4,5,6,7,8,9,10,12,14\},
	\]
	and there is a 3-dimensional representation $\rho\colon G\to\GL(3,\CC)$ such that:
	\begin{itemize}
		\item $\rho$ is faithful (the action contains no translations),
		\item its character $\chi$ contains no complex conjugated irreducible characters (the action is rigid),
		\item for each $g\in G$, the characteristic polynomial of $\rho(g)\oplus\overline{\rho}(g)$ has integer coefficients ($\rho(g)$ maps the lattice of the torus to itself),
		\item if $\ord(g)\in\{5,8,10,12\}$, then $1\in\Eig(\rho(g))$ (these elements have to act freely), and
        \item if $\ord(g)\in\{7,9,14\}$, then $1\notin\Eig(\rho(g))$ (the fixed points are isolated).
	\end{itemize} 
\end{Notation}

In the sequel, we will frequently use the following version of Burnside's-Lemma counting singularities in two different ways. 

\begin{Lemma}\label{le:CountingFixedPoints}
    Let $T$ be a complex torus, $G$ a finite group acting holomorphically on $T$ such that all stabilizer groups are cyclic. Let $m$ be a divisor of $\lvert G\rvert$.
    Assume that the order of each non-trivial element of $G$ having fixed points is not a proper multiple of $m$. Let $s_m$ be the number of elements of $G$ of order $m$ acting with fixed points, and $\ell$ the number of fixed points of such an element.
   Then
    \[
        \#\{[x]\in T/G\mid \Stab(x)\simeq\ZZ_m\}\cdot\frac{\lvert G\rvert}{m}=\ell\cdot \frac{s_m}{\varphi(m)}
    \]
\end{Lemma}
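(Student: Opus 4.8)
The plan is to count, in two ways, the cardinality of the set
$S := \{(g,x) \in G \times T \mid g \neq 1,\ \ord(g) = m,\ g\cdot x = x\}$
of pairs consisting of a nontrivial element of order exactly $m$ together with one of its fixed points. Summing over the first coordinate gives immediately $\#S = \ell \cdot s_m$, since by hypothesis every element $g$ of order $m$ that has fixed points has exactly $\ell$ of them (this uses Lemma~\ref{le:Fix}, which guarantees that the number of fixed points of such an automorphism depends only on its order, together with the fact that elements of order $m$ which act freely contribute nothing to $S$).

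For the other direction I would sum over the second coordinate, i.e.\ over points $x \in T$. Since all stabilizers are cyclic, for a fixed $x$ the number of $g$ with $g\cdot x = x$ and $\ord(g) = m$ equals the number of elements of order $m$ in the cyclic group $\Stab(x)$. Here is where the divisibility hypothesis enters: we are told that no nontrivial element of $G$ with fixed points has order a proper multiple of $m$. Hence if $m \mid \lvert \Stab(x)\rvert$ and $\Stab(x)$ is a nontrivial cyclic group all of whose nontrivial elements have fixed points, then necessarily $\Stab(x) \simeq \ZZ_m$ (a larger cyclic group of order a multiple of $m$ would contain an element of order a proper multiple of $m$ with fixed points). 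A cyclic group of order $m$ has exactly $\varphi(m)$ elements of order $m$, while a cyclic group whose order is not divisible by $m$ has none. Therefore the points $x$ contributing to $S$ are exactly those with $\Stab(x) \simeq \ZZ_m$, each contributing $\varphi(m)$, and the $G$-orbit of such a point has size $\lvert G\rvert / m$. This yields
\[
    \#S = \#\{x \in T \mid \Stab(x)\simeq\ZZ_m\}\cdot \varphi(m) = \#\{[x]\in T/G \mid \Stab(x)\simeq\ZZ_m\}\cdot \frac{\lvert G\rvert}{m}\cdot \varphi(m).
\]
Equating the two expressions for $\#S$ and dividing by $\varphi(m)$ gives the claimed identity.

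I do not expect a serious obstacle here; the only subtlety to state carefully is the step identifying the contributing stabilizers as exactly $\ZZ_m$, which rests squarely on the two standing hypotheses (all stabilizers cyclic, and no element with fixed points has order a proper multiple of $m$) and on Remark~2.x's observation that a nontrivial element of a stabilizer of a point $p$ fixes $p$, so that every element of $\Stab(x)$ either is trivial or has fixed points. One should also note that elements of order $m$ \emph{not} contained in any stabilizer — i.e.\ those acting freely — are automatically excluded from $S$ by definition, which is why only the number $s_m$ of fixed-point-having order-$m$ elements appears on the right-hand side.
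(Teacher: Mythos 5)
Your proposal is correct and is essentially the paper's argument: both are a double count of the incidences between order-$m$ elements with fixed points and the points they fix, resting on the same two observations (the maximality hypothesis forces $\Stab(x)\simeq\ZZ_m$ for any such fixed point $x$, and the $\varphi(m)$ generators of a given stabilizer share the same fixed-point set). The paper merely phrases it as grouping the $s_m$ elements into $s_m/\varphi(m)$ stabilizer subgroups with pairwise disjoint fixed loci rather than summing over pairs, which is the same computation.
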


\begin{proof}
     The left hand side of the equation counts the number of points in $T$ with stabilizer isomorphic to $\ZZ_m$. Each stabilizer contains $\varphi(m)$ elements of order $m$, all of them having the same fixed points. Moreover, generators of different stabilizer groups have disjoint sets of fixed points by the \qq{maximality} of $m$. Thus, the claim follows.
\end{proof}

\begin{Proposition}\label{prop:-id}
    If $-\id\in\im(\rho)$, then $k=9$ in Corollary~\ref{cor:BasketsSing} and $G\simeq \ZZ_{14}$. In particular, the cases $k=1,\ldots,8,10,11$ can not occur.
\end{Proposition}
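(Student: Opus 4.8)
The plan is to use the constraint that $-\id \in \im(\rho)$ forces an element $g$ of order $2$ with fixed points, hence by Remark~\ref{rem:centralElements} part (1) of Lemma~\ref{le:Lefschetz} applies: $2^6 = |G| \cdot (\tfrac{1}{2}N_2 + \tfrac{1}{4}N_4 + \tfrac{1}{6}N_6 + \tfrac{1}{14}N_{14})$. First I would go through the list in Corollary~\ref{cor:BasketsSing}. Since $-\id \in \im(\rho)$ means there is an even-order stabilizer, the baskets with $N_2 = N_4 = N_6 = N_{14} = 0$, namely $k = 1, 2, 12, 13, 14, 15$, are immediately excluded (the quantity in parentheses would be $0$, impossible). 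For each remaining $k \in \{3,\ldots,11\}$, I compute $q_k := \tfrac{1}{2}N_2 + \tfrac{1}{4}N_4 + \tfrac{1}{6}N_6 + \tfrac{1}{14}N_{14}$ from the table; then $|G| = 2^6 / q_k$ must be a positive integer. This rules out most cases on divisibility grounds alone: e.g. for $k = 3$, $q_3 = \tfrac{1}{2} + \tfrac{6}{4} = 2$, giving $|G| = 32$; for $k = 9$, $q_9 = \tfrac{9}{2} + \tfrac{1}{14} = \tfrac{64}{14}$, giving $|G| = 14$; and so on — several $q_k$ are not of the form $2^6/\text{integer}$.

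Next, for the handful of $k$ surviving the integrality test, I would eliminate them by combining the computed $|G|$ with the structural restrictions: the order of $G$ must be a product of the allowed prime powers (from Lemma~\ref{le:orders} and Theorem~\ref{theo:Singularities}, only primes $2, 3, 7$ occur, with $2$- and $3$-parts of exponent at most $2$ and $7$-part at most $1$, so $|G| \mid 4 \cdot 9 \cdot 7 = 252$), and $G$ must enjoy the standard conditions of Notation~\ref{not:Standard}. In particular, whenever $N_9 \neq 0$ or $N_{14} \neq 0$ one gets further central elements ($\ze_3 \cdot \id$ via Remark~\ref{rem:centralElements}, respectively a cyclic subgroup $\ZZ_{14}$ from a point with stabilizer $\ZZ_{14}$), and Lemma~\ref{le:CountingFixedPoints} together with Lemma~\ref{le:Fix} pins down how many singular points of each type a single group of the given order can produce — which must match the basket exactly. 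I expect that for $k = 9$ the count goes through cleanly: $|G| = 14$, the unique singularity $\tfrac{1}{14}(1,9,11)$ forces $\ZZ_{14} \le G$, hence $G = \ZZ_{14}$, and then Lemma~\ref{le:CountingFixedPoints} applied to the orders $2$, $7$, $14$ reproduces exactly $[N_2,N_4,N_{14}] = [9,0,1]$ with $N_3 = N_6 = 0$; and that every other surviving $k$ fails either the integrality of $|G|$, the divisibility $|G| \mid 252$ with the $2$- and $3$-exponent bounds, or the fixed-point count.

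The main obstacle will be the cases where $|G|$ comes out to be an allowed integer but moderately large (e.g. the $k$ with $|G| = 32$ or similar), since there one cannot conclude purely numerically and must argue that no group of that order admits a faithful rigid $3$-dimensional representation with $-\id$ in the image and the prescribed fixed-point behaviour — this is where the $2$-Sylow analysis from the proof of Theorem~\ref{theo:Singularities} (a $2$-group acting with isolated fixed points and containing $-\id$ is cyclic of order $\le 4$, since a faithful $1$-dimensional subrepresentation is forced by reducibility) becomes decisive, capping the $2$-part of $|G|$ at $4$ and thus excluding $|G| = 32$ outright. Assembling these observations, the only consistent possibility is $k = 9$ with $G \simeq \ZZ_{14}$, and I would close by noting that the representation $\rho(1) = \diag(\ze_{14}, \ze_{14}^9, \ze_{14}^{11})$ indeed satisfies all the standard conditions (it is faithful, rigid since no eigenvalue equals the conjugate of another, has integer characteristic polynomial over $\rho \oplus \overline\rho$, and $1 \notin \Eig$), realizing this case and thereby completing the proof.
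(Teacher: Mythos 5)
Your overall strategy is the same as the paper's: use Lemma~\ref{le:Lefschetz}(1) to read off $\lvert G\rvert = 2^6/q_k$ for each basket and then eliminate cases. However, there are two genuine gaps. First, you have misread Corollary~\ref{cor:BasketsSing}: the baskets $k=1$ and $k=2$ are $[0,1,2,1,1,0]$ and $[0,4,2,1,0,0]$ in the order $[N_2,N_3,N_4,N_6,N_9,N_{14}]$, so they have $N_4=2$ and $N_6=1$ and are \emph{not} excluded by your opening observation (only $k=12,\ldots,15$ have all even-order counts zero, which is exactly why the paper states that $-\id\in\im(\rho)$ iff $k\le 11$). Both give $\lvert G\rvert=96$ and require substantive arguments: the paper kills $k=1$ because $9\nmid 96$ while $N_9=1$ forces an element of order $9$, and kills $k=2$ by counting the $2\lvert G\rvert/6$ elements of order $6$ with eigenvalues $\{\ze_6,\ze_6^5\}$ forced by $N_6=1$ via Lemma~\ref{le:CountingFixedPoints} and then checking (by machine) that no group of order $96$ satisfies the standard conditions plus this constraint. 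Your proposal never engages with these two cases. Also, the integrality test only eliminates $k=5$ and $k=10$; the seven other surviving values all yield integer orders ($96, 96, 32, 56, 24, 24, 16, 14, 8$), so most of the work remains after that step.

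Second, your ``decisive'' claim that the $2$-part of $\lvert G\rvert$ is capped at $4$ (hence $\lvert G\rvert \mid 252$) is not justified. The Sylow analysis in the proof of Theorem~\ref{theo:Singularities} applies to \emph{stabilizer} subgroups, where every non-trivial element acts with fixed points and therefore has no eigenvalue $1$; that is what forces a faithful one-dimensional subrepresentation and collapses the group. A $2$-Sylow subgroup of $G$ itself may contain many elements acting freely (with eigenvalue $1$), so no such bound is available a priori --- and indeed the candidate orders $96$, $56$, $32$, $16$, $8$ all have $2$-part exceeding $4$ and must be excluded by other means. The paper handles $k=3,8,11$ by checking the standard conditions (with the extra constraints that order-$4$ elements either act freely or have eigenvalues $\{i,-i\}$) against all groups of order $8$, $16$, $32$, and handles $k=4$ by the counting argument that $N_{14}=2$ would force $48$ elements of order $14$ in a group of order $56$ that can contain at most $42$. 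Without these case-by-case eliminations your argument does not close, even though the endpoint ($k=9$, $G\simeq\ZZ_{14}$, which you verify correctly) is right.
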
\label{prop:Case1}

\begin{proof}
    By Remark~\ref{rem:centralElements}, $-\id\in\im(\rho)$ holds if and only if $k\in\{1,\ldots,11\}$. By Lemma~\ref{le:Lefschetz}, the number of singularities of even order determine uniquely the group order. They are displayed in the following table:
	\begin{center}
		\bgroup\def\arraystretch{1.3}\begin{tabular}{c||c|c|c|c|c|c|c|c|c|c|c}
			$k$ & $1$ & $2$ & $3$ & $4$ & $5$ & $6$ & $7$ & $8$ & $9$ & $10$ & $11$\\ \hline
			$\lvert G\rvert$ & $96$ & $96$ & $32$ & $56$ & $\tfrac{224}{9}$ & $24$ & $24$ & $16$ & $14$ & $\tfrac{32}{3}$ & $8$
		\end{tabular}\egroup
	\end{center}
    Obviously, the cases $k=5$ and $k=10$ are not possible.
	
	If $k=1$ or $k=6$, then the group order is not divisible by $9$. Hence, the group $G$ doesn't contain any element of order $9$ -- a contradiction to $N_9\neq 0$.\\

	If $k=3,\ 8, \ 11$, then $G$ is a $2$-group of order $8,16$ or $32$. Non of these groups fulfills the standard conditions and the following constrains: if $\lvert G\rvert =8$, then we are in case $k=11$ and $N_4=0$ and thus, also the elements of order $4$ act freely. In the other two cases, all elements of order $4$ having linear parts without eigenvalue $1$ have as set of eigenvalues $\{i,-i\}$ (the fixed points of elements of order $4$ lead to singularities of type $\tfrac{1}{4}(1,1,3) $).\\
	
	If $k=2$ or $k=7$, then $N_6=1$. By Lemma~\ref{le:CountingFixedPoints}, $G$ has precisely $2\cdot \lvert G\rvert/6$ elements of order $6$ whose set of eigenvalues is $\{\ze_6,\ze_6^5\}$. Note that all other elements of order $6$ have the eigenvalue $1$. For the case $k=7$, we observe furthermore that all elements of order $4$ act freely, as $N_4=0$ and stabilizer groups of order $4s$, where $s\geq 2$, do not occur (cf. Theorem~\ref{theo:Singularities}). No group of order $96$ and $24$, enjoys both, the standard and these additional conditions.\\
 
 	If $k=4$, then $\lvert G\rvert=56$ and $N_{14}=2$. By Lemma~\ref{le:CountingFixedPoints}, $G$ has $2\cdot 56\cdot 6/14=48$ elements of order $14$. Note that $G$ has at least six elements of order $7$ and a $2$-Sylow subgroup of order $8$, whose elements have order dividing $8$. Therefore, $G$ can have at most $56-6-8=42$ elements of order $14$ -- a contradiction.\\
	
	If $k=9$, then $\lvert G\rvert =14$ and since $N_{14}\neq 0$, the group $G$ has an element of order $14$. Thus, $G$ is cyclic of order $14$ and the proposition is proven.
\end{proof}

\begin{Proposition}\label{prop:ze3}
    If $\ze_3\cdot\id\in\im(\rho)$, then $k=12$ and $G\simeq \ZZ_9$ or $\ZZ_9\rtimes\ZZ_3$, or $k=15$ and $G\simeq \ZZ_3^2$ or $\ZZ_3^3$. In particular, the cases $k=13$ and $14$ can not occur.
\end{Proposition}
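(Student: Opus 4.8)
The plan is to follow the pattern of the proof of Proposition~\ref{prop:-id}: first cut down the list of admissible baskets, then use the Lefschetz identity of Lemma~\ref{le:Lefschetz}(2) to bound $\lvert G\rvert$, and finally read off the isomorphism type of $G$ in each surviving case. As a first step, note that $\ze_3\cdot\id\in\im(\rho)$ excludes $-\id\in\im(\rho)$: otherwise Proposition~\ref{prop:-id} would force $G\simeq\ZZ_{14}$, but the analytic representation of $\ZZ_{14}$ is generated by $\diag(\ze_{14},\ze_{14}^9,\ze_{14}^{11})$, none of whose powers is a scalar third root of unity. Hence, by the contrapositive of Remark~\ref{rem:centralElements}, all even-order baskets vanish, which by Corollary~\ref{cor:BasketsSing} leaves precisely $k\in\{12,13,14,15\}$. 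One also records that the unique element $h$ with $\rho(h)=\ze_3\cdot\id$ has order $3$, is central (since $\rho(ghg^{-1})=\ze_3\cdot\id$ forces $ghg^{-1}=h$), and has $\#\Fix(h)=27$ by Lemma~\ref{le:Fix}.

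Next one feeds the value of $N_9$ from each remaining basket into the identity $\lvert G\rvert\cdot(3N_{3,gor}+N_9)=243$ supplied by Lemma~\ref{le:Lefschetz}(2). For $k=13$ ($N_9=2$) the factor $3N_{3,gor}+2$ is $\geq2$ and coprime to $3$, so it cannot divide $243=3^5$, and $k=13$ is impossible. For $k=14$ ($N_9=1$), the congruence $3N_{3,gor}+1\equiv1\pmod 3$ forces $3N_{3,gor}+1=1$, i.e.\ $N_{3,gor}=0$ and $\lvert G\rvert=243$. For $k=12$ ($N_9=3$) one gets $\lvert G\rvert\cdot(N_{3,gor}+1)=81$, and since $N_9\neq0$ forces an element of order $9$ and hence $9\mid\lvert G\rvert$, we obtain $\lvert G\rvert\in\{9,27,81\}$. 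For $k=15$ ($N_9=0$): every one of the $27$ fixed points of $h$ has cyclic stabilizer (Theorem~\ref{theo:Singularities}) of order among $3,6,9$, but $6$ and $9$ are excluded since $N_6=N_9=0$, so each such stabilizer equals $\langle h\rangle$ and the corresponding point is of type $\tfrac13(1,1,1)$; thus $N_{3,gor}=81/\lvert G\rvert\geq1$ and $\lvert G\rvert\in\{3,9,27,81\}$.

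It then remains to identify $G$. For $k=12$, a group of order $9$ with an element of order $9$ is $\ZZ_9$; a group of order $27$ with an element of order $9$ is $\ZZ_{27}$, $\ZZ_9\times\ZZ_3$ or $\ZZ_9\rtimes\ZZ_3$, and $\ZZ_{27}$ is ruled out by the standard conditions. For $k=15$, $N_9=0$ together with Lemma~\ref{le:orders}(2) (elements of order $9$ always have fixed points) shows $G$ has exponent $3$, so a group of order $9$ is $\ZZ_3^2$, one of order $27$ is $\ZZ_3^3$ or $\He(3)$, and one of order $3$ is $\ZZ_3$. The last is impossible: $\ZZ_3$ acting by $\ze_3\cdot\id$ produces only $\tfrac13(1,1,1)$-points, so the basket is empty, contradicting Proposition~\ref{prop:CondNumberSing}. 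For $\He(3)$ one uses that the character of its $3$-dimensional irreducible representation vanishes off the centre, so every non-central element has eigenvalue $1$ and must act freely; thus the only non-free points are the $27$ fixed points of $h$, forcing $N_3=0$ and contradicting the basket $[0,9,0,0,0,0]$.

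What is left --- and what I expect to be the main obstacle --- is eliminating the residual groups: those of order $81$ (for $k=12$ and $k=15$), those of order $243$ (for $k=14$), and $\ZZ_9\times\ZZ_3$ (for $k=12$). A useful structural input is that $\langle h\rangle$ is a central $\ZZ_3$ of index $9$, $27$ or $81$, so $G$ is a central extension of a group of order at most $81$ by $\ZZ_3$. I would then run through the finitely many isomorphism types of $3$-groups of order $81$ and $243$, checking in each case whether there is a faithful rigid $3$-dimensional representation satisfying the integrality constraint of Notation~\ref{not:Standard}, having $\ze_3\cdot\id$ in its image, forcing every eigenvalue-$1$ element to act freely, compatible with an invariant lattice, and producing the prescribed basket --- and likewise excluding $\ZZ_9\times\ZZ_3$, which shares its basket with $\ZZ_9\rtimes\ZZ_3$ and so must be distinguished by the lattice and cohomology data. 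Unlike the arithmetic that disposes of $k=13$ or the character argument that disposes of $\He(3)$, this is a genuinely finite case analysis, and is where the \cite{MAGMA} computations enter.
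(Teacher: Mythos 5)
Your reduction to $k\in\{12,13,14,15\}$, the Lefschetz bookkeeping $\lvert G\rvert\,(3N_{3,gor}+N_9)=3^5$, the exclusion of $k=13$, the bounds $\lvert G\rvert\in\{9,27,81\}$ for $k=12$ and $\lvert G\rvert\in\{3,9,27,81\}$ for $k=15$, and the identifications in orders $9$ and $27$ (including the eigenvalue-$1$ argument against $\He(3)$, which is a valid variant of the paper's determinant observation that $\diag(\ze_3,\ze_3,\ze_3^2)\notin\SL(3,\CC)$) all match the paper's proof and are correct. The gap is that you stop exactly where the proposition still has content: you never actually eliminate the groups of order $81$ (both for $k=12$ and $k=15$), the groups of order $243$ (for $k=14$), or $\ZZ_9\times\ZZ_3$, deferring all of them to an unexecuted computer search. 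Since the conclusion of the proposition is precisely that none of these occur, the proof is incomplete as written.

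Two of these cases admit short hand arguments that you could have supplied. For $k=15$ and $\lvert G\rvert=81$: your own exponent-$3$ observation leaves only $\ZZ_3^4$ and $\ZZ_3\times\He(3)$, and neither embeds faithfully in $\GL(3,\CC)$ --- for $\ZZ_3^4$ the representation splits into three characters with values in $\langle\ze_3\rangle$, so the image has at most $27$ elements; for $\ZZ_3\times\He(3)$ the representation would be irreducible, so by Schur the center $\ZZ_3^2$ acts by scalars of order dividing $3$ and the kernel is non-trivial. This is exactly how the paper disposes of that case without any computation. For $\ZZ_9\times\ZZ_3=\langle g\rangle\times\langle k\rangle$: the constraints force $\rho(g)\sim\diag(\ze_9,\ze_9^4,\ze_9^7)$ and $\rho(k)\sim\diag(1,\ze_3,\ze_3^2)$, and a check of the six possible alignments shows that one of $gk$, $gk^2$ is always sent to a scalar $\ze_9^c\cdot\id$, which violates both the integrality condition of Notation~\ref{not:Standard} and the canonicity of the resulting singularity. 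Only the remaining eliminations (order $81$ with elements of order $9$, and order $243$) genuinely require the MAGMA-assisted count of elements of order $9$ via Lemma~\ref{le:CountingFixedPoints} followed by a check of the standard conditions, which is also how the paper handles them; but a proof that merely announces this search without performing it has not established the statement.
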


\begin{proof}
    By Proposition~\ref{prop:-id} and Remark~\ref{rem:centralElements}, $-\ze_3\cdot\id$ belongs to the image of $\rho$ if and only if $k\in\{12,\ldots,15\}$. As a consequence of Lemma~\ref{le:Lefschetz}, $N_9$ cannot be $2$. Hence, the case $k=13$ can be excluded.\\
    
	If $k=14$, then $N_9=1$ and so, $N_{3,gor}=0$ and $\lvert G\rvert = 3^5$. By Lemma~\ref{le:CountingFixedPoints}, $G$ has $3^5/9\cdot 6/3=54$ elements of order $9$ and no elements of order greater than $9$. The only groups of order $3^5$ with these properties are the ones with ID $\langle 243,53\rangle$ and $\langle 243, 58\rangle$. Both of these groups do not fulfill the standard conditions -- hence, $k=14$ is also not realizable.\\
 
	If $k=12$, we have $N_9=3$ and thus, $\lvert G\rvert =3^a$ for some $a\in\{2,3,4\}$. If $a=4$, then $N_{3,gor}=0$, and following the same argument as in the case $k=14$, we obtain a contradiction. If $a=3$, then $G$ is either isomorphic to $\ZZ_3\times\ZZ_9$ or to $\ZZ_9\rtimes\ZZ_3$ (we need an element of order $9$), but the first group does not enjoy the standard conditions. If $a=2$, then $G\simeq \ZZ_9$ since $G$ contains an element of order $9$.\\
 
    If $k=15$ then
    \[
	3^3 = N_{3,gor}\cdot \tfrac{\lvert G\rvert}{3}
	\]
    because $N_9=0$. Since $N_3\neq 0$, the order of $G$ is strictly greater than $3$. Note furthermore that $G$ doesn't contain any element of order $9$ because $N_9=0$, i.e., the group has exponent $3$.\\
	Clearly, if $\lvert G\rvert=9$, then $G\simeq\ZZ_3^2$ is the only possibility and if $\lvert G\rvert=27$, then $G\simeq \ZZ_3^3$ (the group $\He(3)$ is not possible since $N_3\neq 0$ and the images of the $3$-dimensional irreducible representations of $\He(3)$ belong to $\SL(3,\CC)$).\\
	The only groups of order $81$ with exponent $3$ are $\ZZ_3^4$ and $\ZZ_3\times \He(3)$. Both groups do not admit a faithful 3-dimensional representation. In case of the first group $\ZZ_3^4$, the representation is the sum of three 1-dimensional characters. Since each of them takes values in  $\langle \ze_3\rangle$, the image of the representation has at  most $3^3$ elements. The second $3$-group $\ZZ_3\times \He(3)$ is not abelian, hence, the representation is irreducible. By Schur's lemma, its center $\ZZ_3^2$ acts by scalar multiples of the identity matrix of order $3$. So, the kernel of the representation is non-trivial.
\end{proof}

In the rest of the section, we consider the remaining case, where the image of $\rho$ does not contain any scalar matrices. Thus we cannot apply Lemma~\ref{le:CountingFixedPoints} to control the group order. Instead, we apply  Sylow's theorems to bound the orders of the $p$-subgroups of $G$. The goal is to prove the following:

\begin{Proposition}\label{prop:LastCase}
    If  $-\id,\ \ze_3\cdot\id\notin\im(\rho)$, then $k=15$ and $G\simeq\ZZ_3^2$.
\end{Proposition}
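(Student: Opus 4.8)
The plan is to combine the numerical constraint of Proposition~\ref{prop:CondNumberSing} with the hypothesis $-\id,\ \ze_3\cdot\id\notin\im(\rho)$ in three stages: first I would pin down the basket of singularities, then determine a Sylow $3$-subgroup of $G$, and finally rule out all other prime divisors of $|G|$. For \emph{Step~1} (the basket is $k=15$): by Remark~\ref{rem:centralElements}, $-\id\notin\im(\rho)$ forces $N_2=N_4=N_6=N_{14}=0$ and $\ze_3\cdot\id\notin\im(\rho)$ forces $N_9=0$; moreover a point carrying a $\tfrac13(1,1,1)$-singularity has stabilizer acting by a scalar $\ze_3^a\cdot\id$, so no singularity of type $\tfrac13(1,1,1)$ occurs either. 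Scanning Corollary~\ref{cor:BasketsSing}, the only surviving entry is $k=15$, that is $N_3=9$. Hence $X$ has exactly nine singular points of type $\tfrac13(1,1,2)$, and the only other singularities it can carry are of the Gorenstein type $\tfrac17(1,2,4)$, which is invisible to the orbifold Riemann--Roch formula.

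\emph{Step 2 (a Sylow $3$-subgroup is $\ZZ_3^2$).} Since $N_3=9\neq0$, there is an element $h\in G$ of order $3$ acting with fixed points, so $\rho(h)$ is conjugate to $\diag(\ze_3,\ze_3,\ze_3^2)$. Let $S_3$ be a Sylow $3$-subgroup. It has no element of order $9$: by Theorem~\ref{theo:Singularities} the only canonical singularity of order $9$ is $\tfrac19(1,4,7)$, so an order-$9$ element acting with fixed points would have analytic representation conjugate to $\diag(\ze_9^a,\ze_9^{4a},\ze_9^{7a})$ and its cube would act by $\ze_3^a\cdot\id$ --- excluded. Thus $S_3$ is a $3$-group of exponent $3$ carrying a faithful $3$-dimensional representation (the restriction of $\rho$), so $S_3\in\{\ZZ_3,\ZZ_3^2,\ZZ_3^3,\He(3)\}$. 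A faithful $3$-dimensional representation of $\ZZ_3^3$ or of $\He(3)$ has $\ze_3\cdot\id$ in its image (in the first case because the three characters form a basis of the dual, in the second by Schur's lemma applied to the centre of the necessarily irreducible representation), which is excluded; and $S_3\cong\ZZ_3$ is impossible because, applying Lemma~\ref{le:CountingFixedPoints} with $m=3$ --- legitimate since in the basket $k=15$ every non-trivial element of $G$ with fixed points has order $3$ or $7$ by Theorem~\ref{theo:Singularities} --- one gets $9\cdot\tfrac{|G|}{3}=27\cdot\tfrac{s_3}{2}$, so that $|G|=9\cdot\tfrac{s_3}{2}$ is divisible by $9$ (here $s_3$, the number of order-$3$ elements acting with fixed points, is even). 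Therefore $S_3\cong\ZZ_3^2$, and writing $\rho|_{S_3}=\lambda_1\oplus\lambda_2\oplus\lambda_3$ one checks, using rigidity and the absence of $\ze_3\cdot\id$ in the image, that precisely two (a priori at most four) of the eight order-$3$ elements of $S_3$ act with fixed points, each of type $\tfrac13(1,1,2)$.

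\emph{Step 3 ($G=S_3$).} From $S_3\cong\ZZ_3^2$ one obtains $|G|=9m$ with $\gcd(m,3)=1$, and it remains to show $m=1$, i.e.\ $2,5,7\nmid|G|$. In the basket $k=15$ an element of order $2$ or $4$ acts freely (because $N_2=N_4=0$) and hence has $1$ as an eigenvalue; by Lemma~\ref{le:orders} the same holds for orders $5,8,10,12$, while every element of order $7$ acts with fixed points and the Sylow $7$-subgroup is $\ZZ_7$ by Corollary~\ref{cor:7groupsCyclic}. I would then use Sylow's theorems, Lemma~\ref{le:PGroups} and the order restrictions of Lemma~\ref{le:orders} (for instance, a commuting pair consisting of an order-$7$ and an order-$3$ element would yield an element of order $21$, which is excluded; more generally all element orders lie in $\{1,\dots,10,12,14\}$) to bound the Sylow $p$-subgroups for $p\in\{2,5,7\}$, leaving only finitely many possibilities for $G$. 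Imposing the standard conditions of Notation~\ref{not:Standard} together with the basket $k=15$ and $-\id,\ \ze_3\cdot\id\notin\im(\rho)$ then kills every remaining candidate except $G\cong\ZZ_3^2$; this last finite verification is carried out with MAGMA. Alternatively, once one knows that $S_3$ is normal in $G$, Lemma~\ref{le:CountingFixedPoints} forces $s_3=2$ by Step~2, hence $|G|=9$ immediately.

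The main obstacle is Step~3. Unlike elements of order $6,9,14,18$, whose powers instantly produce one of the forbidden scalars $-\id$, $\ze_3\cdot\id$ or a forbidden singularity type, the freely acting elements of order $2,4,5,8,10,12$ are compatible with the basket $k=15$ on the nose; ruling them out --- equivalently, proving that the Sylow $p$-subgroups for $p\neq 3$ are trivial --- requires the finer Sylow-theoretic and representation-theoretic bookkeeping sketched above, together with the computer-assisted enumeration.
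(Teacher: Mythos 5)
Your Steps~1 and~2 are correct and follow the same route as the paper: the basket is pinned down exactly as in the remark preceding the proposition, and your identification of the Sylow $3$-subgroup as $\ZZ_3^2$ (no order-$9$ elements, exclusion of $\ZZ_3^3$ and $\He(3)$ because a faithful $3$-dimensional representation of either forces $\ze_3\cdot\id$ into the image, exclusion of $\ZZ_3$ via Lemma~\ref{le:CountingFixedPoints} giving $s_3=\tfrac{2}{9}\lvert G\rvert$) reproduces Lemma~\ref{le:SylowsCaseIII}; your count that exactly two elements of $S_3$ act with fixed points is also right. The problem is Step~3, which you correctly flag as the crux but do not actually carry out, and where the paper's proof contains ideas your sketch is missing. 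First, the paper excludes elements of order $7$ altogether by a counting argument: if $n_7\geq 1$, a Sylow/fixed-point count forces $\lvert G\rvert=7n_7$, and then the $6n_7$ elements of order $7$ together with the $\tfrac{2}{9}\lvert G\rvert$ elements of order $3$ with fixed points already exceed $\lvert G\rvert$. Your only concrete remark about the prime $7$ (a commuting pair of orders $3$ and $7$ gives order $21$) does not rule out non-commuting configurations such as a subgroup $\ZZ_7\rtimes\ZZ_3$, so it does not show $7\nmid\lvert G\rvert$. Second, the paper excludes $\ZZ_5^2$ by an explicit argument (freeness forces the third character to be trivial, and then the characteristic polynomial of $\rho\oplus\overline{\rho}$ on a generator fails to have integer coefficients); you gesture at ``order restrictions'' but never produce this.

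These omissions are not cosmetic: without first killing the primes $5$ and $7$ (and bounding the $2$-Sylow subgroups at $2^5$, which the paper also does by a separate computation), your ``finite verification carried out with MAGMA'' would have to range over all orders $2^a\cdot 3^2\cdot 5^c\cdot 7^d$ up to $10080$, which is not a routine enumeration and is exactly what the paper's intermediate lemmas are designed to avoid; the paper's final computer check only needs orders $2^a\cdot 3^2\cdot 5$ (where a single candidate $\langle 360,118\rangle$ is eliminated by hand) and $2^a\cdot 3^2$ with $a\leq 5$. Finally, your proposed shortcut ``once one knows that $S_3$ is normal in $G$, Lemma~\ref{le:CountingFixedPoints} forces $s_3=2$, hence $\lvert G\rvert=9$'' is logically valid but circular in practice: normality of $S_3$ is never established and is essentially as hard as the conclusion itself. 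So the skeleton of your argument matches the paper's, but the decisive arguments of Step~3 --- above all the order-$7$ counting argument --- constitute a genuine gap.
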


\begin{Remark}
    By Remark~\ref{rem:centralElements}, it is clear that $k=15$ is the only case, where $-\id,\ \ze_3\cdot\id\notin\im(\rho)$ holds. In this situation, we have the following basket of singularities:
\[
    9\times\tfrac{1}{3}(1,1,2), \quad N_7\times\tfrac{1}{7}(1,2,4).
\]
In particular, $G$ has no elements of order $9$ since such an element would act with fixed points but $N_9=0$.\\
Recall furthermore, that $\lvert G\rvert = 2^a\cdot 3^b\cdot 5^c \cdot 7^d$ with $d\in\{0,1\}$, and $b\geq 1$ because $N_3\ne 0$.
\end{Remark}

\begin{lemma}
	If $G$ has a $5$-Sylow subgroup $S_5$, then $S_5$ is cyclic of order $5$, thus, $c\in\{0,1\}$.
\end{lemma}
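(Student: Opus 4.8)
The plan is to rule out any subgroup of $G$ isomorphic to $\ZZ_5^2$. Once this is done, Lemma~\ref{le:orders} (which forbids elements of order $25$) shows that every $5$-subgroup of $G$ has order at most $5$: a $5$-group of order $\geq 25$ contains a subgroup of order $25$, which is either cyclic — impossible by Lemma~\ref{le:orders} — or isomorphic to $\ZZ_5^2$ — just excluded. Hence, whenever $5\mid\lvert G\rvert$, the Sylow subgroup $S_5$ is cyclic of order $5$, and since $\lvert G\rvert=2^a3^b5^c7^d$ this gives $c\in\{0,1\}$. This is the $5$-analogue of Corollary~\ref{cor:7groupsCyclic}, but one cannot apply Lemma~\ref{le:PGroups} directly, because elements of order $5$ act \emph{freely} rather than with fixed points.

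So I would assume, for contradiction, that $U\leq G$ with $U\simeq\ZZ_5^2$. Since $U$ is abelian, $\rho|_U$ splits as a sum of three characters $\chi_1,\chi_2,\chi_3\colon U\to\CC^\ast$, and $\bigcap_{i}\ker\chi_i=\{1\}$ because $\rho$ is faithful. Every non-trivial $g\in U$ has order $5$, hence acts freely on $T$ by Lemma~\ref{le:orders}, so $1$ is an eigenvalue of $\rho(g)$; moreover the remaining two eigenvalues are primitive fifth roots of unity. Indeed, they are fifth roots of unity since $\rho(g)$ has order $5$, and if one of them equalled $1$ then the characteristic polynomial of $\rho(g)\oplus\overline{\rho}(g)$ would retain the irrational factor $(t-\zeta)(t-\zeta^{-1})$ for a primitive fifth root of unity $\zeta$, contradicting the fact that $\rho(g)$ preserves the lattice of $T$ (the remaining possibility $\rho(g)=\mathrm{id}$ being excluded by faithfulness). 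Consequently, for each $g\in U\setminus\{1\}$ there is exactly one index $i$ with $g\in\ker\chi_i$; equivalently, the sets $\ker\chi_1\setminus\{1\}$, $\ker\chi_2\setminus\{1\}$, $\ker\chi_3\setminus\{1\}$ partition $U\setminus\{1\}$.

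A counting argument then finishes the proof. The image of a character of $\ZZ_5^2$ is cyclic of exponent dividing $5$, so $\lvert\ker\chi_i\rvert\in\{5,25\}$; and $\ker\chi_i\neq U$, since otherwise the other two parts of the partition would be empty, forcing $\ker\chi_j=\{1\}$ for $j\neq i$. Hence $\lvert\ker\chi_i\rvert=5$ for all $i$, so $24=\lvert U\setminus\{1\}\rvert=\sum_{i=1}^{3}(\lvert\ker\chi_i\rvert-1)=12$, which is absurd. Therefore $G$ has no subgroup isomorphic to $\ZZ_5^2$, and the claim follows as explained above. The only step that requires care is the multiplicity-one statement for the eigenvalue $1$: it follows from the integrality of the characteristic polynomials of the $\rho(g)\oplus\overline{\rho}(g)$ — equivalently, from the $\varphi$-bound already used in the proof of Lemma~\ref{le:orders}, where $\varphi(1)+\varphi(5)=5>4$ rules out a second eigenvalue equal to $1$ — while everything else is elementary group theory.
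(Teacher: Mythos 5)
Your proof is correct, and the opening reduction — using Sylow's theorem and Lemma~\ref{le:orders} to reduce everything to excluding a subgroup isomorphic to $\ZZ_5^2$ — is exactly the paper's. Where you genuinely diverge is in how that subgroup is killed. The paper normalizes the restriction of $\rho$ to $\ZZ_5^2$, up to equivalence and automorphisms, as $(a,b)\mapsto\diag(\zeta_5^a,\zeta_5^b,\zeta_5^{\lambda a+\mu b})$, uses the fact that every nontrivial element acts freely (hence every matrix has eigenvalue $1$) to force $\lambda=\mu=0$, and only then invokes integrality: the characteristic polynomial of $\rho(1,0)\oplus\overline{\rho}(1,0)=\diag(\zeta_5,1,1)\oplus\diag(\zeta_5^{-1},1,1)$ is not in $\ZZ[t]$. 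You instead deploy the integrality of the characteristic polynomials first, to show that the eigenvalue $1$ occurs with multiplicity exactly one for every nontrivial element of $U$, and then finish with a counting argument: the three character kernels would each have order $5$ and their nontrivial parts would have to partition the $24$ nontrivial elements of $U$ into at most $12$. Both routes rest on the same two inputs (freeness of order-$5$ elements and lattice-preservation), so this is a reorganization rather than a new idea, but it is a recognizably different endgame: yours avoids choosing a normal form for the representation and replaces the explicit computation by the standard ``a noncyclic group is not covered by too few small kernels'' count, at the cost of one extra combinatorial step; the paper's version is shorter once the normal form $\diag(\zeta_5^a,\zeta_5^b,\zeta_5^{\lambda a+\mu b})$ is accepted. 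Note that your argument is specific to the free case and correctly does not try to reuse Lemma~\ref{le:PGroups} as in Corollary~\ref{cor:7groupsCyclic}; all steps, including the exclusion of $\lvert\ker\chi_i\rvert=25$ via faithfulness and the non-cyclicity of $\ZZ_5^2$, check out.
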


\begin{proof}
By Lemma \ref{le:orders} and Sylow's theorem, it suffices to exclude that $G$ contains a copy of  $\mathbb Z_5^2$. 
Assuming the existence of such a subgroup,  the restriction of $\rho$ to $\mathbb Z_5^2$ would be of the form
\[
\rho  \colon \mathbb Z_5^2 \longrightarrow {\rm GL}(3,\mathbb C), \qquad (a,b) \mapsto { \rm diag}(\zeta_5^a, \zeta_5^b,\zeta_5^{\lambda a+ \mu b}), 
\]
up to equivalence of representations and automorphisms of $\mathbb Z_5^2$. All of the representation matrices must have $1$ as an eigenvalue because
the elements of $\mathbb Z_5^2$ can not have fixed points (cf. Lemma \ref{le:orders}). This implies $\lambda=\mu=0$.
Since the characteristic polynomial of $\rho(1,0) \oplus \overline{\rho}(1,0)$ does not have integer coefficients, we observe that an action with such a linear part  cannot exist.
\end{proof}

%\begin{proof}
%    By Lemma~\ref{le:orders}, every non-trivial element of $S_5$ has order $5$. In particular, every non-trivial element $g$ of $S_5$ acts freely, and so $1\in\Eig(g)$. The other two eigenvalues have to be non-conjugate primitive $5$-th roots of unity (cf. \cite{Dem}*{Lemma~3.1.6}). Thus, $\Eig(g)=\{1,\ze_5,\ze_5^j\}$ for $j=2$ or $3$. In particular, the determinant of $\rho(g)$ has order $5$. Thus, taking the determinant induces an isomorphism $S_5\simeq \ZZ_5$.
%\end{proof}

%\begin{Lemma}\label{le:ElsOrder3}
%	Let $m_3$ denote the number of elements in $G$ of order $3$ acting with fixed points. Then
%    \[
%	m_3=\tfrac{2}{9}\cdot \lvert G\rvert.
%	\]
%	In particular, the order of $G$ is divisible by $9$, hence $b\geq 2$.
%\end{Lemma}

%\begin{proof}
%	Since $N_3=9$ and $N_{3,gor}=0$, there are $9\cdot \lvert G\rvert /3=3\cdot \lvert G\rvert$ elements in the torus having a stabilizer group isomorphic to $\ZZ_3$. Furthermore, the stabilizer group of each fixed point of an element of order $3$ has order $3$ because $N_9=0$. As each element of order $3$ acting non-freely has precisely $27$ fixed points, and $\ZZ_3$ contains two elements of order $3$, both with the same fixed points, we conclude: $m_3/2\cdot 27 = 3\cdot \lvert G\rvert$, which proves the claim.
%\end{proof}

Furthermore, $G$ has no $7$-Sylow subgroups:

\begin{Lemma}
    The group $G$ has no elements of order $7$, hence, $N_7=d=0$.
\end{Lemma}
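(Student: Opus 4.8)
The plan is to argue by contradiction: suppose $G$ contains an element $g$ of order $7$. By Corollary~\ref{cor:7groupsCyclic} the $7$-Sylow subgroup is $S_7=\langle g\rangle\cong\ZZ_7$, and since $g$ necessarily has fixed points (Lemma~\ref{le:orders}), which are isolated, all eigenvalues of $\rho(g)$ are primitive $7$-th roots of unity; by Theorem~\ref{theo:Singularities} this forces $\rho(g)\sim\diag(\ze_7,\ze_7^2,\ze_7^4)$ for a suitable generator, so $N_7\geq 1$. I would first pin down the numerics: in the present case every non-trivial element of $G$ with fixed points has order $3$ or $7$, so Lemma~\ref{le:CountingFixedPoints} applies with $m=7$, and, using that $g$ has exactly $7$ fixed points (Lemma~\ref{le:Fix}), each with stabilizer exactly $\ZZ_7$ (a $\ZZ_{14}$-stabilizer would force $-\id\in\im(\rho)$), and that the number of elements of order $7$ equals $6\,n_7$ with $n_7$ the number of $7$-Sylow subgroups, one obtains $N_7\cdot\tfrac{\lvert G\rvert}{7}=7\,n_7$. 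As $n_7\equiv 1\pmod 7$, $n_7\mid\lvert G\rvert/7$ and $7\nmid\lvert G\rvert/7$, this forces $n_7=\lvert G\rvert/7$, hence $N_7=7$ and $N_G(S_7)=S_7$.

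Next I would determine the structure of $G$. A short eigenvalue computation gives $C_G(g)=S_7$: as $\rho(g)$ has distinct eigenvalues, $\rho(C_G(g))$ is diagonal, so $C_G(g)$ is abelian; an element $h\in C_G(g)$ with $\gcd(\ord(h),7)=1$ makes $gh$ an element of order $7\cdot\ord(h)$, so $\ord(h)\in\{1,2\}$ by Lemma~\ref{le:orders}, and for $\ord(h)=2$ the element $gh$ has order $14$ with fixed points, forcing $\rho(h)=-\id$ --- excluded; hence $C_G(g)$ is an abelian $7$-group all of whose non-trivial elements act non-freely, so cyclic of exponent $7$ by Lemma~\ref{le:PGroups}, i.e.\ $C_G(g)=S_7$. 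Thus $S_7=N_G(S_7)=C_G(S_7)$, and Burnside's normal $p$-complement theorem produces $K\trianglelefteq G$ with $G=K\rtimes S_7$ and $\lvert K\rvert=\lvert G\rvert/7=2^a3^b5^c$, where $b\geq 1$ since $N_3\neq 0$. Because $K\neq 1$ and $C_G(g)=S_7$, conjugation gives a non-trivial homomorphism $S_7\to\Aut(K)$, so $7\mid\lvert\Aut(K)\rvert$.

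The key point is that $\rho$ is absolutely irreducible. Set $V=\Lam\otimes_\ZZ\QQ$, a $\QQ[G]$-module of dimension $6$ with $V\otimes_\QQ\CC\cong\rho\oplus\overline\rho$; its restriction to $S_7$ is the faithful irreducible $\QQ[\ZZ_7]$-module $\QQ(\ze_7)$, so every non-zero $G$-submodule of $V$ restricts to all of $\QQ(\ze_7)$ and hence equals $V$: thus $V$ is $\QQ$-irreducible, and $D:=\mathrm{End}_{\QQ[G]}(V)$ embeds into $\mathrm{End}_{\QQ[S_7]}(\QQ(\ze_7))=\QQ(\ze_7)$, so $D$ is a subfield of $\QQ(\ze_7)$. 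Since $\dim_\QQ V=6$, the case $D=\QQ$ would make $V\otimes\CC$ absolutely irreducible of dimension $6$ (contradicting $V\otimes\CC=\rho\oplus\overline\rho$); $D=\QQ(\ze_7+\ze_7^{-1})$ would make $V\otimes\CC$ a sum of three $2$-dimensional representations (incompatible with $\dim\rho=3$); and $D=\QQ(\ze_7)$ would give $\rho(G)\subseteq\CC^\times\cdot\id$, hence $G$ cyclic of order dividing $14$, against $3\mid\lvert G\rvert$. Therefore $D=\QQ(\sqrt{-7})$ and $\rho$ is absolutely irreducible of dimension $3$. By Clifford's theorem $\rho|_K$ is then homogeneous ($\dim\rho=3<7=[G:K]$ forbids a transitive permutation of $\geq 2$ isotypic components), and it is not a multiple of a linear character --- otherwise $\rho(K)$ would be scalar and, as $3\mid\lvert K\rvert$, $\ze_3\cdot\id$ would lie in $\im(\rho)$. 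Hence $\rho|_K$ is irreducible, $3$-dimensional and faithful, and since none of $-\id,\ \ze_3\cdot\id,\ \ze_5\cdot\id$ lies in $\im(\rho)$ (the last by the standard conditions of Notation~\ref{not:Standard}, order-$5$ elements acting freely), Schur's lemma yields $Z(K)=1$.

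It remains to contradict $7\mid\lvert\Aut(K)\rvert$. Here $K$ is a centre-free finite subgroup of $\PGL(3,\CC)$ admitting a faithful linear lift to $\GL(3,\CC)$, of order $2^a3^b5^c$ and with no element of order $7$. By the Blichfeldt classification of finite subgroups of $\PGL(3,\CC)$, either $K$ is primitive --- hence one of $A_5$, $\mathrm{PSL}(2,7)$, or a Hessian group of order $216$, $72$ or $36$ (the group $A_6$ has no faithful $3$-dimensional linear representation and its triple cover has non-trivial centre, so both are impossible) --- or $K$ is imprimitive, in which case it has a characteristic abelian normal subgroup $A$ contained in a diagonal torus with $K/A$ a transitive subgroup of $S_3$. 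In each case $7\nmid\lvert\Aut(K)\rvert$: for $A_5$ and the Hessian groups this is immediate from $\lvert\Aut(K)\rvert$; $\mathrm{PSL}(2,7)$ is excluded because $7\mid\lvert K\rvert$; and for imprimitive $K$ the subgroup $A$ has $2$-rank $\leq 2$ (otherwise $-\id\in\rho(A)$), $3$-rank $\leq 3$ and $5$-rank $\leq 1$ (the $5$-Sylow of $G$ being $\ZZ_5$), so $7\nmid\lvert\Aut(A)\rvert$ and hence $7\nmid\lvert\Aut(K)\rvert$. This contradiction proves that $G$ has no element of order $7$, so $d=N_7=0$. I expect this last step to be the main obstacle: handling the imprimitive case and the non-simple primitive cases of the $\PGL(3,\CC)$-classification requires some care. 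Alternatively, after bounding $a$ and $c$ via the standard conditions --- for instance $\ZZ_2^3\not\leq G$, since $\rho(\ZZ_2^3)$ would be a subgroup of the group of diagonal $\{\pm 1\}$-matrices avoiding $-\id$, hence of order $\leq 4$ --- the finitely many remaining candidate groups $K$ can be eliminated with MAGMA.
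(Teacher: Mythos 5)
Your first paragraph coincides exactly with the paper's opening move: Lemma~\ref{le:CountingFixedPoints} with $m=7$, the count $\ell=7$ of fixed points from Lemma~\ref{le:Fix}, and the Sylow divisibility force $N_7=7$ and $\lvert G\rvert=7n_7$. At that point the paper is essentially finished, by a count you could have appended directly to your own first paragraph: applying Lemma~\ref{le:CountingFixedPoints} once more with $m=3$ (where $N_3=9$, $\ell=27$, and $N_{3,gor}=N_9=0$) gives $\tfrac{2}{9}\lvert G\rvert$ elements of order $3$ acting with fixed points, and together with the $6n_7=\tfrac{6}{7}\lvert G\rvert$ elements of order $7$ this already exceeds $\lvert G\rvert$, since $\tfrac{6}{7}+\tfrac{2}{9}=\tfrac{68}{63}>1$. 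No structure theory is needed.

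Your continuation is a genuinely different and much heavier route. Paragraphs two and three are correct: $C_G(g)=S_7$, Burnside's normal $p$-complement theorem, the identification of $\End_{\QQ[G]}(\Lam\otimes\QQ)$ with $\QQ(\sqrt{-7})$, absolute irreducibility of $\rho$, irreducibility of $\rho|_K$ via Clifford, and $Z(K)=1$ all check out against the constraints of Proposition~\ref{prop:LastCase}. The problem is the final paragraph, which is where the contradiction is supposed to materialize and which is not actually carried out. Two concrete soft spots: (i) for the Hessian groups, ``immediate from $\lvert\Aut(K)\rvert$'' is an assertion, not an argument --- note that being a $\{2,3\}$-group is not by itself enough, since $\Aut(\ZZ_2^3)$ has order $168$; (ii) in the imprimitive case you need the diagonal subgroup $A$ to be characteristic (or at least invariant under a putative order-$7$ automorphism) for ``$7\nmid\lvert\Aut(A)\rvert$ hence $7\nmid\lvert\Aut(K)\rvert$'' to make sense, and you assert this without proof; even granting it, you still need the coprime-stability fact that an automorphism of order $7$ acting trivially on $A$ and on $K/A$ is trivial. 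Both gaps are fixable in one stroke: take a chief series of $K$ invariant under the order-$7$ automorphism and observe that every chief factor has $2$-rank $\le 2$ and $3$-rank $\le 2$ (an elementary abelian $2$- or $3$-subgroup of $\GL(3,\CC)$ is diagonalizable, and rank $3$ would force $-\id$ or $\ze_3\cdot\id$ into $\im(\rho)$) and $5$-rank $\le 1$, so $\ZZ_7$ cannot act non-trivially on any factor; by coprimality it then acts trivially on $K$. With that repair your argument closes, but as written the last step is a program rather than a proof --- as you yourself acknowledge --- and the entire Blichfeldt detour is avoidable by the two-line element count above.
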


\begin{proof}
    Let $n_7$ denote the number of $7$-Sylow subgroups of $G$ and assume that $n_7\geq 1$, so $d=1$. First, we show that $\lvert G\rvert = 7\cdot n_7$. By Lemma~\ref{le:CountingFixedPoints} and since $N_{14}=0$, it holds:
	\[
	N_7\cdot \frac{\lvert G\rvert}{7}=7\cdot n_7.
	\]
	Since $7^2$ doesn't divide the group order, this implies that $N_7$ is divisible by $7$. By Sylow's theorems, there exists an integer $k$ such that $n_7\cdot k=\lvert G\rvert/7$. Hence, $N_7\cdot k=7$, which implies $N_7=7$ and $\lvert G\rvert =7\cdot n_7$.\\
    Note  the group has $n_7\cdot 6$ elements of order $7$ and at least $2/9\cdot \lvert G\rvert$ elements of order $3$ (cf. Lemma~\ref{le:CountingFixedPoints}, recall that $N_{3,gor}=N_9=0$). These are in total more elements than $G$ has:
	\[
		(n_7\cdot 6 + \tfrac{2}{9}\cdot \lvert G\rvert) - \lvert G\rvert = n_7\cdot(6+\tfrac{2}{9}\cdot 7 - 7)=n_7\cdot\tfrac{5}{9} >0. \qedhere
	\]
\end{proof}

\begin{Lemma}\label{le:SylowsCaseIII}
	The $2$-Sylow subgroups contain at most $2^5$ elements, and the $3$-Sylow subgroups are all isomorphic to $\ZZ_3^2$. In particular, $a\leq 5$ and $b=2$.
\end{Lemma}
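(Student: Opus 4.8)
The plan is to bound the $p$-Sylow subgroups for $p=2$ and $p=3$ separately, using the standard conditions (Notation~\ref{not:Standard}), Lemma~\ref{le:PGroups}, and a counting argument via Lemma~\ref{le:CountingFixedPoints}. Recall that in the case at hand the basket of singularities is $9\times\tfrac13(1,1,2)$ together with $N_7\times\tfrac17(1,2,4)$, and by the preceding lemmas we already know $N_7=d=0$ and $c\in\{0,1\}$; moreover $G$ has exponent $3$ on its $3$-part (no elements of order $9$) and every element of order $3$ with fixed points has linear part similar to $\diag(\ze_3,\ze_3,\ze_3^2)$ or $\diag(\ze_3,\ze_3^2,\ze_3^2)$ (as $\ze_3\cdot\id\notin\im(\rho)$), never $\ze_3\cdot\id$.

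First I would handle the $3$-Sylow subgroup $S_3$. By Lemma~\ref{le:PGroups} any abelian subgroup all of whose elements act with fixed points is cyclic; since $S_3$ has exponent $3$, it cannot contain $\ZZ_3^2$ with all elements fixed-point-free (they would need eigenvalue $1$, forcing a $1$-dimensional subrepresentation which by the standard conditions cannot have integral characteristic polynomial unless it is trivial, contradicting faithfulness — here one reuses the argument from the $\ZZ_5^2$ lemma). So I must show $S_3$ is not bigger than $\ZZ_3^2$. A group of order $3^3$ and exponent $3$ is either $\ZZ_3^3$ or $\He(3)$; the latter is excluded since its $3$-dimensional irreducibles lie in $\SL(3,\CC)$, which would force $\ze_3\cdot\id\in\im(\rho)$ on the center, contradiction, or (if $\rho$ restricted to $\He(3)$ is reducible) it splits off a faithful character, impossible. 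For $\ZZ_3^3 \le G$: restricting $\rho$ gives a sum of three linear characters into $\langle\ze_3\rangle$, and faithfulness forces the three characters to span the dual, so some non-trivial element maps to $\ze_3\cdot\id$ — ruled out. Hence $b\le 2$, and since $b\ge 1$ (as $N_3\ne 0$) and $N_3=9$ with each $\tfrac13(1,1,2)$ singularity forcing order-$3$ elements, a short count (Lemma~\ref{le:CountingFixedPoints}: $9\cdot|G|/3$ equals $\ell\cdot s_3/\varphi(3)$ with $\ell=3^2$) pins $b=2$ rather than $b=1$.

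For the $2$-Sylow subgroup $S_2$: since $N_2=N_4=N_6=0$ in case $k=15$, every element of $2$-power order acts \emph{freely}, so by Lemma~\ref{le:orders} such elements have orders in $\{2,4,8\}$. Thus $S_2$ is a $2$-group of exponent dividing $8$. Moreover, $-\id\notin\im(\rho)$, so an order-$2$ element acting freely has eigenvalues $\{1,-1,-1\}$ or $\{1,1,-1\}$; the integrality of the characteristic polynomial of $\rho(g)\oplus\overline\rho(g)$ is automatic here, so this does not immediately bound $|S_2|$. The bound $|S_2|\le 2^5$ must come from representation theory: $\rho|_{S_2}$ is a faithful $3$-dimensional representation of a $2$-group, and a faithful complex representation of a $2$-group of dimension $3$ forces the group into a short explicit list — indeed decomposing $\rho|_{S_2}=\chi_1\oplus\chi_2\oplus\chi_3$ (possibly reducible; a $2$-group has no irreducible of odd degree $>1$), faithfulness means $\bigcap\ker\chi_i=1$, so $S_2$ embeds in $(\CC^\times)^3$ if all $\chi_i$ are linear — but then $S_2$ is abelian with a faithful rep by three characters, and since each $\chi_i$ has $2$-power order image, if one of them has order $1$ (forced when the corresponding eigenvalue is $1$ for a free element) one gets $S_2\hookrightarrow\mu_{2^\infty}^2$; combined with the exponent-$8$ bound this gives $|S_2|\le 8^2=2^6$ at worst, and a finer look using that free order-$2$ elements must have eigenvalue $1$ (so at least one $\chi_i$ is trivial on every such element, but not on others) trims this to $2^5$. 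Alternatively, and more cleanly, one observes $\rho|_{S_2}$ faithful $3$-dimensional forces $S_2$ to be a subgroup of a maximal finite $2$-subgroup of $\GL(3,\CC)$ stable under the constraints; these are classified and the largest compatible with ``all elements act freely and have integral $\rho\oplus\overline\rho$'' has order $2^5$.

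The main obstacle will be the $2$-Sylow bound: unlike the $3$-Sylow case, where exponent $3$ plus Lemma~\ref{le:PGroups} does most of the work, here I cannot invoke non-existence of $\ZZ_2^k$ (abelian $2$-groups acting freely \emph{can} occur), so the argument is genuinely a finite-group/representation-theory classification of faithful $3$-dimensional reps of $2$-groups under the extra eigenvalue constraints rather than a clean structural dichotomy. I expect the cleanest writeup enumerates the possible eigenvalue patterns of generators — each element has eigenvalue $1$ with multiplicity at least $1$ (freeness) — deduces that $\rho|_{S_2}$ is a sum of three characters $\chi_1,\chi_2,\chi_3$ with $\chi_1\chi_2\chi_3$ constrained, and then bounds $|{\rm im}(\chi_i)|$ by $8$ while noting the three kernels must intersect trivially yet each contains enough elements, yielding $|S_2|\mid 2^5$; whether a sharper case analysis or a direct appeal to known lists of finite $2$-subgroups of $\GL(3,\CC)$ is shorter is a presentation choice, and I would likely defer the bulkier part to a MAGMA verification as the authors do elsewhere.
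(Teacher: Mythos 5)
Your treatment of the $3$-Sylow subgroup follows the paper's route (exclude all groups of order $3^3$ using $\ze_3\cdot\id\notin\im(\rho)$ and the absence of order-$9$ elements, then force $9\mid\lvert G\rvert$ by counting fixed points), and your explicit exclusions of $\ZZ_3^3$ and $\He(3)$ are correct fillings-in of what the paper leaves implicit. One numerical slip does matter, though: an element of order $3$ acting with isolated fixed points on a $3$-dimensional torus has $\ell=3^{2n/\varphi(3)}=3^3=27$ fixed points (Lemma~\ref{le:Fix}), not $3^2$. With $\ell=9$ your count gives $s_3=\tfrac{2}{3}\lvert G\rvert$ and only $3\mid\lvert G\rvert$, which does not rule out $b=1$; with the correct $\ell=27$ you get $s_3=\tfrac{2}{9}\lvert G\rvert$ and hence $9\mid\lvert G\rvert$, which is exactly the paper's conclusion.

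The genuine gap is in the $2$-Sylow bound. Your by-hand argument only treats the case where $\rho|_{S_2}$ splits as three linear characters, i.e.\ $S_2$ abelian; since a $2$-group may well have a faithful $3$-dimensional representation of shape $1\oplus 2$ with an irreducible $2$-dimensional constituent (e.g.\ dihedral or quaternion subquotients), the non-abelian case is simply dropped. Even in the abelian case, the step ``some $\chi_i$ is trivial, so $S_2\hookrightarrow\mu_{2^\infty}^2$'' does not follow from ``every element has eigenvalue $1$'': the coordinate in which the eigenvalue $1$ sits may vary from element to element, so no single character need be trivial. Moreover, dismissing the integrality of the characteristic polynomial of $\rho(g)\oplus\overline{\rho}(g)$ as ``automatic'' is only true for involutions; for elements of order $4$ or $8$ it is a genuine constraint (it forces all primitive $8$-th roots of unity to occur with equal multiplicity, killing candidates such as $\ZZ_8^2$ acting by $\diag(\ze_8^a,\ze_8^b,1)$) and is one of the conditions that actually produces the bound. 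Your final reduction from $2^6$ to $2^5$ is asserted rather than proved, and the appeal to a classification of maximal finite $2$-subgroups of $\GL(3,\CC)$ is not carried out. The paper resolves exactly this point by an order-by-order MAGMA enumeration of $2$-groups subject to the standard conditions together with the requirement that every $\rho(h)$ has eigenvalue $1$; your instinct to defer to such a computation is the right one, but as written the hand argument neither replaces it nor correctly sets up the conditions it would have to check.
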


\begin{proof}
	Let $S_p$ be a $p$-Sylow group of $G$. Then $S_p$ and all its subgroups fulfill the standard conditions except possibly for the rigidity. By Sylow's theorems, $S_p$ has subgroups of order $p^k$ for all $k$ such that $p^k\leq \lvert S_p\rvert$. So, we determine successively by increasing order all possible $p$-groups for $p=2,3$.\\
    If $p=3$, then by assumption, $\ze_3\cdot\id$ is not contained in the image of the representation and the group does not contain elements of order $9$. This excludes all groups of order $3^3$ and the entire list of possible $3$-groups contains only $\ZZ_3$ and $\ZZ_3^2$. By Lemma~\ref{le:CountingFixedPoints}, the number of elements of order $3$ with fixed points equals $2/9\cdot \lvert G\rvert.$ In particular, $9$ divides the group order, so the only possibility for a $3$-Sylow subgroup is $\ZZ_3^2$.\\
	If $p=2$, then for each element $h$ in $S_2$, the matrix $\rho(h)$ has to have eigenvalue 1 since all elements of even order have to act freely. Together with the described strategy, a MAGMA-computation shows that the $2$-subgroups of $G$ have order at most $2^5$.
\end{proof}

\begin{proof}[Proof of Proposition~\ref{prop:LastCase}]
	First, we assume $\lvert G\rvert=2^a\cdot 3^2\cdot 5$ with $a\in\{0,\ldots,5\}$ . The only group having at least $2/9\cdot\lvert G\rvert$ elements of order $3$ and fulfilling that the order of each element of $G$ belongs to $\{1,2,3,4,5,6,8,10,12\}$ has MAGMA-ID $\langle 360, 118\rangle$. But this group is not abelian and has no irreducible character of degree $2$ or $3$ -- a contradiction. Hence, $5$ does not divide the group order and $\lvert G\rvert=2^a\cdot 3^2$.\\
	If moreover $a=0$, then $G$ is isomorphic to $\ZZ_3^2$.\\
	In order to exclude the case $a\geq 1$, we check with MAGMA that there is no group $G$ of order $2^a\cdot 3^2$ with $a\in\{1,\ldots,5\}$ such that the order of each element belongs to $\{1,2,3,4,6,8,12\}$, there are at least $2/9\cdot\lvert G\rvert$ elements of order $3$, and such that $G$ enjoys the standard conditions and additionally
	\begin{itemize}
		\item $\#\{g\in G\mid \ord(g)=3, \ 1\notin\Eig(\rho(g))\}=\tfrac{2}{9}\cdot\lvert G\rvert$ and
		\item if $\ord(g)=3$ and $1\notin\Eig(\rho(g))$, then $\Eig(\rho(g))=\{\ze_3,\ze_3^2\}$. $\hfill\qedhere$
	\end{itemize}
\end{proof}

%**********************************************

\section{Classification of the Quotients}\label{sec:classQuot}

The classification of the quotients with $p_g(X)=1$ was already done in \cite{GK}. If $p_g(X)=0$, then by Theorem~\ref{theo:groups}, the group $G$ is one of the following:
\[
    \ZZ_9,\quad \ZZ_{14},\quad \ZZ_3^2,\quad \ZZ_3^3, \quad \ZZ_9\rtimes\ZZ_3.
\]

For the cyclic groups, the situation is easy to handle:

\begin{prop}\label{prop:Cyclic}
	For $G=\ZZ_9$ and $G=\ZZ_{14}$, there exists up to biholomorphism one and only one quotient $X=T/G$.
\end{prop}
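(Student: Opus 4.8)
The plan is to pin down, for a cyclic group $G$ of order $9$ or $14$, the analytic representation $\rho$, then the lattice $\Lam$, and finally the translation part $\tau$, showing that each is unique up to the equivalences recorded in Proposition~\ref{prop:classes}; existence will then be witnessed by the quotients $Y_1$ and $Y_2$ of Table~\ref{table0}. For the representation: a generator $g$ of $G$ has $\ord(g)\in\{9,14\}$, so by Lemma~\ref{le:orders} it acts with fixed points, and any such fixed point $p$ has $\Stab(p)=\langle g\rangle=G$, giving a cyclic quotient singularity of $X$ of order $|G|$. By Theorem~\ref{theo:Singularities}, the only cyclic singularity of order $9$ (resp.\ $14$) is $\tfrac19(1,4,7)$ (resp.\ $\tfrac1{14}(1,9,11)$); moving $p$ to the origin, linearising, and relabelling $g$ if necessary (which only relabels the subgroup $\Phi(G)\le\Bihol(T)$), this forces $\rho(g)=\diag(\ze_9,\ze_9^4,\ze_9^7)$, resp.\ $\diag(\ze_{14},\ze_{14}^9,\ze_{14}^{11})$. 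In particular $1\notin\Eig(\rho(g^k))$ for all $k\not\equiv 0$.

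For the torus: writing $d:=|G|$, the eigenvalues of $\rho\oplus\overline\rho$ exhaust the primitive $d$-th roots of unity, each exactly once, so $\Lam\otimes\QQ$ is the $d$-th cyclotomic $\QQ[G]$-module, i.e.\ $\QQ(\ze_d)$ with $g$ acting by multiplication by a primitive root; thus $\Lam\subset\CC^3$ is a full-rank $\ZZ[\ze_d]$-submodule, hence a fractional ideal of $\ZZ[\ze_d]$. Since $\QQ(\ze_9)$ and $\QQ(\ze_{14})=\QQ(\ze_7)$ are principal, $\Lam=\lambda\cdot\ZZ[\ze_d]$ for some $\lambda\in\QQ(\ze_d)^\times$, and multiplication by $\lambda$ is diagonal in the eigenbasis of $\rho(g)$, hence complex linear and commuting with $\im(\rho)$ — so it lies in $\sN_\CC(\Lam,\Lam_0)$ for $\Lam_0:=\Lam(\ze_9,\ze_9^4,\ze_9^7)$, resp.\ $\Lam(\ze_{14},\ze_{14}^9,\ze_{14}^{11})$. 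By Proposition~\ref{prop:classes} we may then take $T=\CC^3/\Lam_0$.

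For the translation part: since $\rho(g)$ has no eigenvalue $1$, both the norm $N:=\sum_{i=0}^{d-1}\rho(g)^i$ vanishes on $\CC^3$ and $\rho(g)-\id$ is invertible there, hence surjective on $T$; for a cyclic group this gives $H^1(G,T)=\ker(N)/\im(\rho(g)-\id)=0$ (equivalently $H^1(G,T)\cong H^2(G,\Lam_0)=\Lam_0^{\,G}/N\Lam_0=0$, as $\ZZ[\ze_d]$ is a domain and $g\ne 1$). Therefore the class of $\tau$ is trivial, the action is linear up to conjugation by a translation, and $X$ is biholomorphic to the quotient of $\CC^3/\Lam_0$ by $\diag(\ze_9,\ze_9^4,\ze_9^7)$, resp.\ $\diag(\ze_{14},\ze_{14}^9,\ze_{14}^{11})$, that is, to $Y_1$, resp.\ $Y_2$; this gives uniqueness. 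For existence I would check directly that $Y_1$ and $Y_2$ meet all the hypotheses: translation-free and holomorphic by construction; finite fixed locus, since no nontrivial element of $G$ has $1$ as an eigenvalue; the canonical singularities listed in Table~\ref{table0}, obtained by counting fixed points of the stabilizers via Lemma~\ref{le:Fix}; $p_g=0$ because $\det\rho(g)\ne 1$; and rigidity by Proposition~\ref{prop:Rigid}, the weight sets of $\rho$ and $\overline\rho$ being disjoint modulo $d$.

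The only step that is not a formality is the torus step: it rests on the principality of $\ZZ[\ze_9]$ and $\ZZ[\ze_{14}]$ (so that the $G$-stable lattice is unique up to homothety) together with the observation that this homothety automatically lies in $\sN_\CC$ once $\rho(g)$ is diagonalised. Everything else reduces to the singularity classification already established in Theorem~\ref{theo:Singularities} and to elementary facts about the cohomology of cyclic groups.
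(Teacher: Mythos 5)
Your proposal is correct and follows essentially the same route as the paper: linearize the action at a fixed point of a generator, read off $\rho(g)=\diag(\ze_9,\ze_9^4,\ze_9^7)$ resp.\ $\diag(\ze_{14},\ze_{14}^9,\ze_{14}^{11})$ from the singularity classification of Theorem~\ref{theo:Singularities}, and use that $\QQ(\ze_9)$ and $\QQ(\ze_{14})$ have class number one to pin down the torus. The paper simply delegates the lattice step to Shimura--Taniyama and absorbs the translation part into ``moving the origin,'' whereas you unpack these into the fractional-ideal argument and the computation $H^1(G,T)=0$; both of these elaborations are accurate.
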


\begin{proof}
    Moving the origin, we can assume that $G$ acts linearly with generators
    \[
        \diag(\ze_9,\ze_9^4,\ze_9^7)\qquad \makebox{or} \qquad \diag(\ze_{14},\ze_{14}^9,\ze_{14}^{11}),
    \]
    respectively (cf. Theorem~\ref{theo:Singularities}). This implies that $T$ is of CM-type in each case (cf. \cite{BL}*{Theorem~13.3.2}) and uniquely determined due to \cite{Shimura}*{Proposition~17, p. 60} since the class numbers of the cyclotomic fields $\QQ(\ze_9)$ and $\QQ(\ze_{14})$ are one.
\end{proof}

For the non-cyclic groups, the situation is more involved because it is not possible to assume that the action is linear. Here, we use the \qq{classification machinery} from \cite{GK} outlined in Section~\ref{sec:preliminaries}, and treat the different groups separately.

%*******************

\subsection{The Case $G=\ZZ_3^3$}\

Up to equivalence of representations and automorphisms of $G=\ZZ_3^3$, the only faithful representation of dimension $3$ of $\ZZ_3^3$ is given by
\[
    \rho\colon \ZZ_3^3\longrightarrow\GL(3,\CC),\quad (a,b,c)\mapsto \diag(\ze_3^a,\ze_3^b,\ze_3^c).
\]

\begin{rem}\label{rem:StructureTforZ3^3}
    Let $T$ be a $3$-dimensional torus admitting an action $\Phi$ of $\ZZ_3^3$ with linear part $\rho$. The subtori 
\[
E_1:=\ker(\rho(0,1,1)-\id)^0,\quad E_2:=\ker(\rho(1,0,1)-\id)^0,\quad E_3:=\ker(\rho(1,1,0)-\id)^0
\]
of $T$ are all isomorphic to Fermat's elliptic curve $E=\mathbb C/ \mathbb Z[\zeta_3]$ since this is the unique elliptic curve where  multiplication by $\zeta_3$ is an automorphism.
The addition map $E_1\times E_2\times E_3\to T$ is an isogeny and induces an equivariant isomorphism $T\simeq  E^3/K$, where $K$ is the kernel.\\
Since $T$ containes $E_j$ as a subtorus, $K$ cannot contain elements of the form $\lambda e_j$ with $\lambda\neq 0$.\\

From now on, we fix the following generators of $G=\ZZ_3^3$:
\[
    k:=(1,\ 1, \ 1), \quad h:=(0,\ 2,\ 1),\quad g:=(1,\ 0, \ 0).
\]
Furthermore, by possibly changing the origin of $T$, we assume that the cocycle $\tau$ is given by
\[
    \tau(k)=0,\quad \tau(h)=(a_1,\ a_2,\ a_3), \quad \tau(g)=(b_1,\ b_2, \ b_3)\quad \in T.
\]
We will refer to such a cocycle as cocycle in \textit{standard form}. Note that
\[
    \ze_3\cdot\tau(g)=\rho(k)\cdot\tau(g)+\tau(k)=\tau(kg)=\tau(gk)=\tau(g).
\]
Thus, $\tau(g)=(b_1,b_2,b_3)$ is fixed by multiplication with $\ze_3$. Analogous, $\tau(h)=(a_1,a_2,a_3)\in\Fix_{\ze_3}(T)$.
\end{rem}

\begin{Lemma}
     Let $\tau$ be a cocycle in standard form. Then the following holds:
    \begin{itemize}
        \item $a_1\in E[3]$,
        \item $(0,3b_2,3b_3)\in K$,
        \item $v:=((\ze_3-1)a_1,(1-\ze_3^2)b_2,(1-\ze_3)b_3)\in K$.
    \end{itemize}
    Conversely, two elements $(a_1,a_2,a_3),(b_1,b_2,b_3)$ in $\Fix_{\ze_3}(T)$ fulfilling these conditions yield a well-defined  cocycle in standard form. 
\end{Lemma}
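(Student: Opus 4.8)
My plan is to use the standard dictionary between $1$-cocycles and sections of a semidirect product. The cocycle identity $\tau(xy)=\rho(x)\tau(y)+\tau(x)$ is exactly the statement that $g\mapsto(\tau(g),g)$ is a section of the projection $T\rtimes_\rho G\to G$, where $T\rtimes_\rho G$ carries the multiplication $(t,x)(t',y)=(t+\rho(x)t',\,xy)$. Since $G=\ZZ_3^3$ has the presentation $\langle k,h,g\mid k^3=h^3=g^3=1,\ [k,h]=[k,g]=[h,g]=1\rangle$, prescribing $\tau(k)=0$, $\tau(h)=(a_1,a_2,a_3)$, $\tau(g)=(b_1,b_2,b_3)$ produces a well-defined cocycle in standard form if and only if the six defining relations are satisfied by the elements $(0,k)$, $((a_1,a_2,a_3),h)$, $((b_1,b_2,b_3),g)$ in $T\rtimes_\rho G$; the resulting homomorphism is then automatically a section because it restricts to the identity on the generators of $G$ after projecting. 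So I will run through the six relations in turn.

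I would start with the easy ones: $k^3=1$ holds automatically since $\tau(k)=0$, and the commutators $[k,h]=1$, $[k,g]=1$ translate, via $\rho(k)=\ze_3\cdot\id$, into $\ze_3\tau(h)=\tau(h)$ and $\ze_3\tau(g)=\tau(g)$, i.e. into $\tau(h),\tau(g)\in\Fix_{\ze_3}(T)$ --- this is the standing hypothesis already recorded in Remark~\ref{rem:StructureTforZ3^3}. For $h^3=1$ I would compute $\id+\rho(h)+\rho(h)^2$: since $\rho(h)=\diag(1,\ze_3^2,\ze_3)$ and $1+\ze_3+\ze_3^2=0$, this operator equals $\diag(3,0,0)$, so the relation becomes $(3a_1,0,0)=0$ in $T=E^3/K$, i.e. $(3a_1,0,0)\in K$; because $K$ contains no nonzero element of the form $\lambda e_1$, this is equivalent to $3a_1=0$ in $E$, that is, $a_1\in E[3]$. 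Entirely analogously $g^3=1$ becomes $\diag(0,3,3)\,\tau(g)=(0,3b_2,3b_3)=0$ in $T$, i.e. $(0,3b_2,3b_3)\in K$. Finally $[h,g]=1$ reads $(\rho(h)-\id)\tau(g)=(\rho(g)-\id)\tau(h)$ in $T$; substituting $\rho(h)-\id=\diag(0,\ze_3^2-1,\ze_3-1)$ and $\rho(g)-\id=\diag(\ze_3-1,0,0)$ and collecting everything on one side (legitimate since $K$ is a subgroup) yields precisely $v=((\ze_3-1)a_1,(1-\ze_3^2)b_2,(1-\ze_3)b_3)\in K$.

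For the converse I would simply read these equivalences backwards: given $(a_1,a_2,a_3),(b_1,b_2,b_3)\in\Fix_{\ze_3}(T)$ satisfying the three displayed conditions, the relations $[k,h]=[k,g]=1$ hold by the fixed-point property, $k^3=1$ holds trivially, $h^3=1$ follows from $a_1\in E[3]$ (so that $(3a_1,0,0)$ already vanishes in $E^3$), $g^3=1$ from $(0,3b_2,3b_3)\in K$, and $[h,g]=1$ from $v\in K$; hence the data define a genuine cocycle in standard form. I expect the computations to be completely routine --- everything comes down to diagonal $3\times 3$ matrices over $\ZZ[\ze_3]$ and the relation $1+\ze_3+\ze_3^2=0$ --- so the only step requiring a little care is the bookkeeping between equalities in $T=E^3/K$ and in $E^3$, concretely the upgrade of $(3a_1,0,0)\in K$ to $a_1\in E[3]$, which is exactly where the structural input on $K$ from Remark~\ref{rem:StructureTforZ3^3} (no elements of the form $\lambda e_j$) enters.
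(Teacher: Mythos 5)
Your proposal is correct and follows essentially the same route as the paper: both arguments amount to checking the defining relations of $\ZZ_3^3$ on the prescribed generator values (with $k^3=1$ and the commutators with $k$ being automatic from $\tau(h),\tau(g)\in\Fix_{\ze_3}(T)$, and $h^3$, $g^3$, $[h,g]$ producing the three listed conditions). Your explicit justification of the upgrade from $(3a_1,0,0)\in K$ to $a_1\in E[3]$ via the absence of elements $\lambda e_1$ in $K$ is a detail the paper leaves implicit, but it is the same argument.
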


\begin{proof}
    The corresponding action has to be a group homomorphism, so it has to preserve the relations of the elements in $G=\ZZ_3^3$. This leads to:
    \begin{itemize}
        \item $\tau(h^3)=0 \iff a_1\in E[3]$.
        \item $\tau(g^3)=0\iff (0,3b_2,3b_3)\in K$.
        \item $\tau(gh)=\tau(hg)\iff v\in K$.
    \end{itemize}
    Note that $\tau(hk)=\tau(kh)$ and $ \tau(gk)=\tau(kg)$ is always fulfilled since $\tau(h),\tau(g)\in\Fix_{\ze_3}(T)$. Moreover, $\tau(k^3)=0$ since $\tau(k)=0$.
\end{proof}

We will call a cocycle $\tau$ in standard form \emph{good}, if the corresponding action has only isolated fixed points. The latter is the case if all elements whose linear parts of the action have 1 as eigenvalue act freely. Since all non-trivial elements in $\ZZ_3^3$ have order $3$, the elements $u$ and $u^2$ have the same fixed points. Thus, the action has isolated fixed points if and only if the elements $h,hk,hk^2,g,ghk^2,gh^2k^2,gk^2,ghk,gh^2k$ act freely. This leads to the following conditions on the cocycle:

\begin{Lemma}\label{le:good}
    A cocylce in standard form is good if and only if the following conditions are satisfied:
    \begin{enumerate}
        \item For all $i=1,2,3$: $a_i$ is never the $i$-th coordinate of an element in $K$.
        \item There are no elements in $K$ of the forms $$(\ast,b_2,b_3),\quad (\ze_3a_1+b_1,\ast,a_3+b_3), \quad(2\ze_3a_1+b_1,-\ze_3a_2+b_2,\ast).$$
        \item $b_1$ is never the first coordinate of an element in $K$.
        \item $a_2+b_2$ is never the second coordinate of an element in $K$.
        \item $-\ze_3^2a_3+b_3$ is never the third coordinate of an element in $K$.
    \end{enumerate}
\end{Lemma}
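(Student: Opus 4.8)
The plan is to convert the geometric condition \qq{$w$ acts freely on $T$} into an arithmetic condition on the lattice $K$ and the cocycle values, and then to make it explicit for each of the nine elements $h,hk,hk^2,g,ghk^2,gh^2k^2,gk^2,ghk,gh^2k$ singled out just above. Write $Z(w):=\{i : w_i=0\}$ for $w=(w_1,w_2,w_3)\in\ZZ_3^3$, so that $\rho(w)=\diag(\ze_3^{w_1},\ze_3^{w_2},\ze_3^{w_3})$ has eigenvalue $1$ exactly in the coordinates indexed by $Z(w)$. The element $w$ has a fixed point on $T=E^3/K$ if and only if $\tau(w)$ lies in the image of $\id-\rho(w)\colon T\to T$. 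On $E^3$ this image equals $\{v\in E^3 : v_i=0 \text{ for all } i\in Z(w)\}$, since for $w_i\ne 0$ the endomorphism $1-\ze_3^{w_i}$ is a nonzero element of $\ZZ[\ze_3]=\End(E)$, hence a surjective isogeny of $E$. Reducing modulo $K$ gives the following criterion: $w$ acts freely if and only if no element of $K$ coincides with (a lift of) $\tau(w)$ in all coordinates indexed by $Z(w)$. This statement does not depend on the chosen lift of $\tau(w)\in T$ to $E^3$, because replacing the lift by $\widetilde{\tau(w)}+\kappa_0$ with $\kappa_0\in K$ only translates the set of candidate elements $\kappa\in K$ by $\kappa_0$.

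By the discussion preceding the statement, the action has isolated fixed points precisely when the nine elements listed above all act freely. Using the cocycle identity $\tau(uv)=\rho(u)\tau(v)+\tau(u)$, the normalizations $\tau(k)=0$, $\ze_3\cdot\tau(h)=\tau(h)$, $\ze_3\cdot\tau(g)=\tau(g)$ in $T$, and $1+\ze_3^2=-\ze_3$, $1+\ze_3=-\ze_3^2$ — so that $\tau(h^2)=(\id+\rho(h))\tau(h)=(2a_1,-\ze_3 a_2,-\ze_3^2 a_3)$ — one computes in $T$:
\begin{gather*}
\tau(h)=\tau(hk)=\tau(hk^2)=(a_1,a_2,a_3),\qquad \tau(g)=\tau(gk^2)=(b_1,b_2,b_3),\\
\tau(ghk)=\tau(ghk^2)=(\ze_3 a_1+b_1,\ a_2+b_2,\ a_3+b_3),\\
\tau(gh^2k)=\tau(gh^2k^2)=(2\ze_3 a_1+b_1,\ -\ze_3 a_2+b_2,\ -\ze_3^2 a_3+b_3).
\end{gather*}
The associated zero-coordinate sets are $Z(h)=\{1\}$, $Z(hk)=\{2\}$, $Z(hk^2)=\{3\}$, $Z(g)=\{2,3\}$, $Z(ghk^2)=\{1,3\}$, $Z(gh^2k^2)=\{1,2\}$, $Z(gk^2)=\{1\}$, $Z(ghk)=\{2\}$, $Z(gh^2k)=\{3\}$.

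Feeding each of these into the criterion from the first paragraph then produces exactly: condition (1) from $h,hk,hk^2$; the three requirements of condition (2) from $g,ghk^2,gh^2k^2$ respectively; condition (3) from $gk^2$; condition (4) from $ghk$; and condition (5) from $gh^2k$, which is the stated characterization. I expect the only genuinely conceptual point to be the freeness criterion together with its independence of the lift — the subtlety being that, since $K$ mixes the three coordinates, the individual entries $a_i$ and $b_i$ are a priori defined only modulo $K$. The remaining work is bookkeeping, where the main thing to watch is the consistent use of $\ze_3\cdot\tau(h)=\tau(h)$ and $\ze_3\cdot\tau(g)=\tau(g)$ in $T$ to bring every formula into the normalized shape appearing in the statement.
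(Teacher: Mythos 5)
Your proof is correct and follows essentially the same route as the paper: the paper likewise writes the fixed-point equation $(\rho(w)-\id)z+\tau(w)\in K$ for a representative element ($h$, giving $(a_1,(\ze_3^2-1)z_2+a_2,(\ze_3-1)z_3+a_3)\in K$), uses surjectivity of $1-\ze_3^{w_i}$ on $E$ in the non-fixed coordinates to reduce to a condition on the coordinates of $K$-elements indexed by $Z(w)$, and declares the remaining eight elements analogous. Your version merely carries out all nine computations explicitly and records the (correct) observation that the criterion is independent of the chosen lift of $\tau(w)$ to $E^3$.
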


\begin{proof}
    We only consider the element $h$. The computation of the other ones is similar.
    An element $z=(z_1,z_2,z_3)\in T$ is a fixed point of $\Phi(h)$ if and only if
    \[
        (a_1, (\ze_3^2-1)z_2+a_2,(\ze_3-1)z_3+a_3) \in K.
    \]
    Thus, we see that $\Phi(h)$ has a fixed point if and only if there is an element $(a_1,\ast,\ast)\in K$.
\end{proof}

Restricting  $\rho$ to the subgroup $\langle h,k\rangle$ of $G$, we obtain the analytic representation of $\ZZ_3^2$ studied in \cite{GK}. Thus, we have:

\begin{Lemma}[\cite{GK}*{Proposition~4.7}]
    The kernel $K$ of the addition map $E_1\times E_2\times E_3\to T$ is contained in $\Fix_{\ze_3}(E)^3\simeq \ZZ_3^3$.
\end{Lemma}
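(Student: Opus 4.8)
The plan is to exploit the equivariance of the addition isogeny: the statement should fall out of the three $\ZZ_3$-invariances of $K$ induced by the coordinate-wise multiplications by $\ze_3$ on $E^3$. Recall from Remark~\ref{rem:StructureTforZ3^3} that, in the chosen coordinates on the universal cover $\CC^3$ of $T$, the subtorus $E_j$ is (the image of) the $j$-th coordinate line, so the addition map $E_1\times E_2\times E_3\to T$ is the isogeny induced by $\id_{\CC^3}$ and $T\simeq (E_1\times E_2\times E_3)/K$. The key point I would use is that each generator $\rho(1,0,0)=\diag(\ze_3,1,1)$, $\rho(0,1,0)=\diag(1,\ze_3,1)$, $\rho(0,0,1)=\diag(1,1,\ze_3)$ of $\im(\rho)$ defines an automorphism of $T$; lifting it to $E_1\times E_2\times E_3$, it acts by multiplication by $\ze_3$ on one factor and trivially on the other two, so it must preserve the kernel $K$.

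Granting this, the main step is a one-line computation. Fix $v=(v_1,v_2,v_3)\in K$. Invariance of $K$ under $\diag(\ze_3,1,1)$ gives $(\ze_3v_1,v_2,v_3)\in K$, and subtracting $v$ yields $((\ze_3-1)v_1,0,0)\in K$. But $K$ contains no element of the form $\lambda e_1$ with $\lambda\neq 0$ — equivalently $E_1\hookrightarrow T$, as recorded in Remark~\ref{rem:StructureTforZ3^3} — hence $(\ze_3-1)v_1=0$ in $E_1$, i.e.\ $v_1\in\Fix_{\ze_3}(E_1)$. Running the same argument with $\diag(1,\ze_3,1)$ and $\diag(1,1,\ze_3)$ gives $v_2\in\Fix_{\ze_3}(E_2)$ and $v_3\in\Fix_{\ze_3}(E_3)$, so $K\subseteq\Fix_{\ze_3}(E_1)\times\Fix_{\ze_3}(E_2)\times\Fix_{\ze_3}(E_3)$. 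Finally, each $E_j\cong E=\CC/\ZZ[\ze_3]$, and on $E$ the fixed locus of multiplication by $\ze_3$ is the kernel of the isogeny $z\mapsto(\ze_3-1)z$, which has degree $3$ (the norm of $\ze_3-1$ in $\ZZ[\ze_3]$); thus $\Fix_{\ze_3}(E)\simeq\ZZ_3$ and $K\subseteq\Fix_{\ze_3}(E)^3\simeq\ZZ_3^3$, as claimed.

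I do not expect a genuine obstacle. The only thing that requires care is the set-up — identifying $E_j$ with the $j$-th coordinate line and the addition map with the quotient $E_1\times E_2\times E_3\to (E_1\times E_2\times E_3)/K=T$, so that the diagonal automorphisms $\diag(\ze_3^a,\ze_3^b,\ze_3^c)$ genuinely descend from $E_1\times E_2\times E_3$ and thereby constrain $K$ as above. Once this bookkeeping is in place, the argument is immediate.
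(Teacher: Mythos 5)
Your argument is correct and is essentially the approach the paper takes: the paper itself only cites \cite{GK}*{Proposition~4.7} for this lemma, but its proof of the directly analogous statement in the $\rho=\rho_2$ subcase of $G=\ZZ_3^2$ uses exactly your mechanism — $\rho(u)$ preserves $K$, so $(\rho(u)-\id)(t)\in K$ is an element supported on a single coordinate, hence zero because $E_j\hookrightarrow T$, forcing $(\ze_3-1)t_j=0$. Your additional remark that $\Fix_{\ze_3}(E)=\ker(\ze_3-1)$ has order $3$ correctly supplies the identification with $\ZZ_3^3$.
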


\begin{Proposition}[\cite{GK}*{Proposition~4.19}]\label{prop:normalizer}
    Any biholomorphism $f\colon X\to X'$ between two quotients with group $\ZZ_3^3$, where the actions have only isolated fixed points, is induced by a biholomorphic map $$\hat{f}\colon E^3\to E^3, \quad z\mapsto Cz+d,$$ such that $CK=K'$. This means that $C$ is contained in the normalizer $\sN:=N_{\Aut(E^3)}(\rho(\ZZ_3^3))$, which is a finite group of order $6^4=1296$ and generated by the matrices
    \[
        \begin{pmatrix}
            -\ze_3 & & \\ & 1 & \\ & & 1
        \end{pmatrix}, \quad
        \begin{pmatrix}
            0 & 1 & 0\\ 0 & 0 & 1 \\ 1 & 0 & 0
        \end{pmatrix}\quad \makebox{and}\quad
        \begin{pmatrix}
            0 & 1 & 0 \\ 1 & 0 & 0 \\ 0 & 0 & 1
        \end{pmatrix}.
    \]
\end{Proposition}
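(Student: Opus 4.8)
The plan is to apply the descent framework of Section~\ref{sec:preliminaries} and then exploit the explicit lattice description of Remark~\ref{rem:StructureTforZ3^3}. First, by Proposition~\ref{prop:ConsBieb}, the biholomorphism $f$ is induced by an affine biholomorphism $\hat f\colon T\to T'$ whose lift to $\CC^3$ has the shape $z\mapsto\bar Cz+\tilde d$ with $\bar C\in\GL(3,\CC)$, $\bar C\Lam=\Lam'$, and $\bar C\rho(g)\bar C^{-1}=\rho(\varphi(g))$ for some $\varphi\in\Aut(\ZZ_3^3)$ — here I use that $\rho$ is, up to equivalence and automorphisms, the unique faithful $3$-dimensional representation, so both analytic representations may be taken equal to $\rho$. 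Hence $\bar C$ normalizes $\rho(\ZZ_3^3)$ in $\GL(3,\CC)$. Since $\rho(\ZZ_3^3)$ decomposes $\CC^3$ into the three pairwise inequivalent eigenlines $\CC e_1,\CC e_2,\CC e_3$, any normalizing matrix must permute these lines, so $\bar C$ is a monomial matrix, $\bar Ce_j=\la_je_{\sigma(j)}$ with $\sigma\in S_3$ and $\la_j\in\CC^\ast$.

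The crucial step is to upgrade $\bar C$ to a biholomorphism of $E^3$ by showing that each $\la_j\in\ZZ[\ze_3]^\ast$. Because $\bar C(\CC e_j)=\CC e_{\sigma(j)}$ and $\bar C\Lam=\Lam'$, intersecting yields $\bar C(\CC e_j\cap\Lam)=\CC e_{\sigma(j)}\cap\Lam'$. Now $\CC e_j\cap\Lam=\ZZ[\ze_3]e_j$: the inclusion $\supseteq$ is clear since $\ZZ[\ze_3]^3\subseteq\Lam$, and $\subseteq$ holds precisely because $K=\Lam/\ZZ[\ze_3]^3$ contains no nonzero element of the form $\la e_j$ (Remark~\ref{rem:StructureTforZ3^3}); the same applies to $\Lam'$ and $K'$. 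Therefore $\bar C(\ZZ[\ze_3]e_j)=\ZZ[\ze_3]e_{\sigma(j)}$, forcing $\la_j\ZZ[\ze_3]=\ZZ[\ze_3]$, i.e.\ $\la_j$ a unit, and hence $\bar C(\ZZ[\ze_3]^3)=\ZZ[\ze_3]^3$. Consequently $\bar C$ descends to a biholomorphism of $E^3=\CC^3/\ZZ[\ze_3]^3$ of the form $z\mapsto Cz+d$ with $C\in\Aut(E^3)=\GL(3,\ZZ[\ze_3])$, and this map induces $\hat f$, hence $f$. From $\bar C\Lam=\Lam'$ and $\bar C\ZZ[\ze_3]^3=\ZZ[\ze_3]^3$ we obtain $CK=K'$ after passing to quotients, and since $C$ still normalizes $\rho(\ZZ_3^3)$ we conclude $C\in\sN:=N_{\Aut(E^3)}(\rho(\ZZ_3^3))$.

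It then remains to identify $\sN$. The reasoning above shows that an element of $\GL(3,\ZZ[\ze_3])$ normalizes $\rho(\ZZ_3^3)$ if and only if it is a monomial matrix whose nonzero entries all lie in $\ZZ[\ze_3]^\ast=\{\pm1,\pm\ze_3,\pm\ze_3^2\}$ (a monomial matrix in $\GL(3,\ZZ[\ze_3])$ has unit entries because its determinant is a unit and $\ZZ[\ze_3]$ is a domain). Since this unit group is cyclic of order $6$, generated by $-\ze_3$, we get $\sN\cong(\ZZ[\ze_3]^\ast)^3\rtimes S_3$, of order $6^3\cdot6=6^4=1296$. Finally, $\diag(-\ze_3,1,1)$ generates the unit factor in the first coordinate, and conjugating it by the remaining two generators — a $3$-cycle and a transposition, which together generate the symmetric group $S_3$ permuting the coordinates — produces the analogous diagonal generators in the other two coordinates; hence the three displayed matrices generate all of $(\ZZ[\ze_3]^\ast)^3\rtimes S_3=\sN$.

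I expect the only genuinely delicate point to be the middle step: forcing the monomial matrix $\bar C$ to have \emph{unit} entries, rather than arbitrary nonzero complex entries. This is exactly where the structure of $T=E^3/K$ is indispensable — it is the fact from Remark~\ref{rem:StructureTforZ3^3} that $K$ contains no element supported on a single coordinate which pins down $\CC e_j\cap\Lam$ to be exactly $\ZZ[\ze_3]e_j$; without this, $\bar C$ need not preserve the reference lattice $\ZZ[\ze_3]^3$ and would not descend to an automorphism of $E^3$.
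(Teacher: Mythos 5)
Your argument is correct: Proposition~\ref{prop:ConsBieb} gives the affine lift, the pairwise distinct characters of $\rho$ force the linear part to be monomial, and the key observation that $\CC e_j\cap\Lam=\ZZ[\ze_3]e_j$ (because $K$ contains no nonzero $\lambda e_j$, Remark~\ref{rem:StructureTforZ3^3}) correctly pins the entries down to units of $\ZZ[\ze_3]$, yielding descent to $E^3$, the identification $\sN\cong(\ZZ[\ze_3]^\ast)^3\rtimes S_3$ of order $6^4$, and the stated generators. The paper itself only cites \cite{GK}*{Proposition~4.19} for this statement, and your proof follows exactly the normalizer-plus-lattice strategy that the paper's framework (Proposition~\ref{prop:ConsBieb} together with the structure of $T=E^3/K$) is set up to deliver, as in the analogous Proposition~\ref{prop:norm}.
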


\begin{rem}
    According to Proposition~\ref{prop:normalizer}, the normalizer $\sN$ acts on the set of kernels $\sK$. In particular, it is enough to consider one representative of each orbit to find all biholomorphism classes of quotients. Furthermore, quotients of tori with kernels of different orbits are never biholomorphic. The orbits are represented by
    \[ 
        K_1:=\{0\}, \quad K_2:=\langle(t,t,0)\rangle, \quad K_3:=\langle (t,t,t)\rangle,\quad K_4:=\langle(t,t,t),(t,-t,0)\rangle.
    \]
\end{rem}

\begin{rem}\label{rem:ActionsZ3^3}
    Let $\tau_1,\tau_2$ be two cocycles in standard form. Then they belong to the same classes in $H^1(\ZZ_3^3,T)$ if and only if there exists a $d\in T$ such that $\rho(u)\cdot d- d=\tau_1(u)-\tau_2(u)$ for all $u\in\ZZ_3^3$. Evaluating this equation in $k$ tells us that $d$ has to belong to $\Fix_{\ze_3}(T)$.
By running a MAGMA implementation, we can determine all actions and classes of good cocycles of $\ZZ_3^3$ on $T=E^3/K_i$.
\begin{center}
	\bgroup\def\arraystretch{1.3}\begin{tabular}{|c|l|l| l |} \hline
		$i$  & $K_i$  &   \# of actions & \# of good classes in $H^1(\mathbb Z_3^3, E^3/K_i)$  \\ 
		\hline \hline
		$1$ & $\{0\}$ & $16$ & $16$\\ \hline
		$2$ & $\langle (t,t,0)\rangle$ & $48$ & $16$ \\
		$3$ & $\langle (t,t,t) \rangle$ & $0$ & $0$ \\ \hline
		$4$ & $\langle (t,t,t), (t,-t,0)\rangle$ & $0$ & $0$ \\ \hline
	\end{tabular}\egroup
\end{center}
In particular, there are no actions with isolated fixed points on the tori $E^3/K_3$ and $E^3/K_4$.
\end{rem}

\begin{rem}\label{rem:Cd}
By Proposition~\ref{prop:normalizer} and Proposition~\ref{prop:classes}, the group of potential linear parts of biholmorphisms of quotients of $E^3/K_i$ is
\[
    \sN_\CC(\Lam_{K_i})=\{C\in\sN\mid C\cdot K_i=K_i\}.
\]
Recall that $\tau$ and $\tau'$ lead to biholomorphic quotients if and only if there exists a matrix $C\in \sN_\CC(\Lam_{K_i})$  and an element $d\in T=E^3/K_i$ such that
\[ (\mathrm{a})\: C\rho(u)C^{-1}=\rho(\varphi_C(u)) \qquad \mathrm{and}\qquad (\mathrm{b})\: (\rho(u)-\id)d=C\tau(\varphi_C^{-1}(u))-\tau'(u) \]
for all $u\in \ZZ_3^3$. Since $\rho(k)=\ze_3\cdot\id$ and $\rho$ is faithful, item (a) implies that $\varphi_C(k)=k$. Hence, $(\rho(k)-\id)d=0$ by item (b), so $d\in\Fix_{\ze_3}(T)$.
\end{rem}

\begin{Proposition}\label{prop:ClassificationZ3^3}
    There are precisely 3 biholomorphism classes of rigid quotients of 3-dimensional tori by rigid actions of $\ZZ_3^3$ with isolated fixed points. More precisely:
    \begin{itemize}
        \item    One of them is realized as a quotient of $T_1=E^3/K_1$ and corresponds to $Y_9$ of Theorem~\ref{theo:main}.
    The other two classes are realized as quotients of $T_2=E^3/K_2$ and correspond to $Y_{10}$ and $Y_{10'}$.
    \item The quotients $Y_{10}$ and $Y_{10'}$ are diffeomorphic to each other but not diffeomorphic to $Y_9$.
    \end{itemize}
\end{Proposition}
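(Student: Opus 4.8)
The plan is to combine the description of biholomorphisms via cocycle orbits (Remark~\ref{rem:Cd}, Proposition~\ref{prop:classes}) with the finite \textsc{Magma} enumeration of good cocycle classes from Remark~\ref{rem:ActionsZ3^3}, and then to treat the diffeomorphism statement for the three quotients by hand. First I would observe that the fixed analytic representation $\rho=\bigoplus_{i=1}^3\chi_i$ is automatically rigid: the irreducible characters occurring in $\rho$ are the three coordinate projections $\ZZ_3^3\to\ZZ_3$, and those of $\overline\rho$ are their negatives, so $\rho$ and $\overline\rho$ have no common constituent and Proposition~\ref{prop:Rigid}(2) applies. Hence a quotient of a $3$-torus by a rigid $\ZZ_3^3$-action with isolated fixed points is exactly a quotient attached to a \emph{good} cocycle (Lemma~\ref{le:good}); by Remark~\ref{rem:StructureTforZ3^3} the torus is $E^3/K$ with $K\in\sK$, and by the remark following Proposition~\ref{prop:normalizer} kernels in different $\sN$-orbits give quotients that are never biholomorphic. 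Since Remark~\ref{rem:ActionsZ3^3} records that there are no good cocycles over $K_3$ or $K_4$, it remains only to count biholomorphism classes separately over $T_1=E^3/K_1$ and $T_2=E^3/K_2$.

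For the count, Remark~\ref{rem:Cd} together with Proposition~\ref{prop:classes}(1) says that two good cocycles on $E^3/K_j$ yield biholomorphic quotients if and only if they lie in the same orbit of the finite group $\sN_\CC(\Lam_{K_j})$ acting on $H^1(\ZZ_3^3,E^3/K_j)$ by $C\ast\tau$, composed with the change-of-origin action by $d\in\Fix_{\ze_3}(E^3/K_j)$. Both $\sN_\CC(\Lam_{K_j})\subseteq\sN$ and $H^1$ are explicit finite objects, so I would run the \textsc{Magma} routine to enumerate the $16$ good classes over each of $K_1,K_2$ and compute the orbits, obtaining exactly one orbit over $K_1$ and exactly two over $K_2$, i.e.\ three classes in total. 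Then, using Lemma~\ref{le:good} I would check that the explicit actions in Table~\ref{table0} for $Y_9$ (over $E^3=E^3/K_1$) and for $Y_{10},Y_{10'}$ (over $E^3/K_2$) are good and represent these orbits, with $Y_{10}$ and $Y_{10'}$ in distinct orbits. I expect this orbit computation to be the main obstacle: the acting groups $\sN_\CC(\Lam_{K_j})\ltimes\Fix_{\ze_3}$ are sizeable and the enumeration of good cocycles in standard form is precisely where the computer algebra does the work, while the surrounding reductions are conceptual.

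Next I would prove $Y_{10}\simeq_{\diff}Y_{10'}$ using the real version of Proposition~\ref{prop:classes}(1). Let $C$ be the $\RR$-linear map given by coordinatewise complex conjugation on $\CC^3$. It lies in $\sN_\RR(\Lam_{K_2})$: it preserves $\Lam_{K_2}=\ZZ[\ze_3]^3+\ZZ(t,t,0)$ because $\overline{\ZZ[\ze_3]}=\ZZ[\ze_3]$ and $\overline{(t,t,0)}=(-t,-t,0)=2(t,t,0)$, using $\overline t=-t$ in $E$; and it conjugates $\rho_\RR$ into itself via the automorphism $\varphi_C\colon g\mapsto g^{-1}$ of $\ZZ_3^3$, since $\overline{\rho(g)}=\rho(g^{-1})$. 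It then remains to verify that $C\ast\tau_{Y_{10}}$ and $\tau_{Y_{10'}}$ differ by a coboundary in $H^1(\ZZ_3^3,E^3/K_2)$; this is the routine check making the stated map $(z_1,z_2,z_3)\mapsto(\overline{z_1},\overline{z_2},\overline{z_3})$ descend to a diffeomorphism $Y_{10}\to Y_{10'}$ (via the commutative diagram in the remark after Proposition~\ref{prop:classes}). Since $C$ is not $\CC$-linear, this is compatible with $Y_{10}$ and $Y_{10'}$ being distinct \emph{biholomorphism} classes found above.

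Finally, for $Y_9\not\simeq_{\diff}Y_{10}$ I would argue topologically. A homeomorphism $Y_9\to Y_{10}$ carries the singular points (the non-manifold points) onto singular points, hence restricts to a homeomorphism of regular loci and induces an isomorphism of orbifold fundamental groups $\pi_1^{\orb}(E^3,\ZZ_3^3)\xrightarrow{\sim}\pi_1^{\orb}(E^3/K_2,\ZZ_3^3)$. By Bieberbach's theorem the translation lattice is the unique maximal normal abelian subgroup, so this isomorphism would restrict to a $\ZZ[\ZZ_3^3]$-module isomorphism $\Lam_{K_1}\to\Lam_{K_2}$ twisted by some $\varphi\in\Aut(\ZZ_3^3)$. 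But a direct computation of coinvariants gives a contradiction: the augmentation ideal of $\ZZ[\ZZ_3^3]$ is generated by $\rho(1,0,0)-\id$, $\rho(0,1,0)-\id$, $\rho(0,0,1)-\id$, i.e.\ by $\ze_3-1$ in each coordinate, so $(\Lam_{K_1})_{\ZZ_3^3}\cong(\ZZ[\ze_3]/(\ze_3-1))^3$ has order $27$, whereas $(\ze_3-1)(t,t,0)=(\ze_3^2,\ze_3^2,0)$ (a unit in the first two coordinates) enlarges those factors and forces $\lvert(\Lam_{K_2})_{\ZZ_3^3}\rvert=9$; and coinvariants are unchanged by twisting with an automorphism of $\ZZ_3^3$. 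Hence $Y_9$ is not even homeomorphic to $Y_{10}$, a fortiori not diffeomorphic. Together with the previous step this shows $\{Y_9\}$ and $\{Y_{10},Y_{10'}\}$ are the diffeomorphism classes among these three quotients, completing the proof.
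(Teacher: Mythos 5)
Your proposal is correct and follows the paper's proof almost verbatim in its main steps: the same reduction to an $\sN_\CC(\Lam_{K_i})$-orbit computation on the good cohomology classes carried out in MAGMA, and the same coordinatewise complex conjugation producing the diffeomorphism $Y_{10}\to Y_{10'}$. The only genuine divergence is in showing $Y_9\not\simeq_{\diff}Y_{10}$: the paper invokes Proposition~\ref{prop:classes}(2) and distinguishes the $G$-modules $T_1$ and $T_2$ by their invariants, $H^0(\ZZ_3^3,T_1)\simeq\ZZ_3^3$ versus $H^0(\ZZ_3^3,T_2)\simeq\ZZ_3^2$, whereas you pass to the translation lattices via Bieberbach and distinguish them by their coinvariants (order $27$ versus $9$). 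Your computation is correct --- $(\ze_3-1)t=\ze_3^2$ is a unit, so the extra generator $(t,t,0)$ does collapse the first two factors of the coinvariants of $\Lam_{K_2}$ --- and coinvariants are indeed unchanged under twisting by an element of $\Aut(\ZZ_3^3)$, so this is an equally valid homological counterpart to the paper's cohomological invariant; nothing substantive is gained or lost either way.
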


\begin{proof}
    The biholomorphism classes of the quotients corresponds to the orbits of the action of $\sN_\CC(\Lam_{K_i})$ on the good cohomology classes in $H^1(G,E^3/K_i)$ (cf. Proposition~\ref{prop:classes}). Using the excplicit despriction of $\sN_\CC(\Lam_{K_i})$ and the coboundaries in Remark~\ref{rem:Cd},
    this computation is done by a MAGMA implementation. The diffeomorphism
    \[
        C\colon \CC^3\longrightarrow \CC^3, \quad (z_1,z_2,z_3)\mapsto (\overline{z_1},\overline{z_2},\overline{z_3}),
    \]
    induces a diffeomorphism between the named quotients of $T_2$.\\
    Since $H^0(\ZZ_3^3,T_1)\simeq \ZZ_3^3$ and $H^0(\ZZ_3^3,T_2)\simeq\ZZ_3^2$, quotients of $T_1$ cannot be diffeomorphic to quotients of $T_2$ by Proposition~\ref{prop:classes}.
\end{proof}

%***************************

\subsection{The Case $G=\ZZ_3^2$}

\begin{Proposition}\label{prop:AnalyticRepr}
	The analytic representation of a rigid, faithful and translation-free action of the group $\ZZ_3^2$ on a 3-dimensional complex torus leading to a quotient $X$ with $p_g(X)=0$ is up to an automorphism of $\ZZ_3^2$ equivalent to one of the two representations 
	\[
        \rho_1(a,b)=\diag(\ze_3^a,\ze_3^b,\ze_3^{ a+b})\qquad \makebox{or}\qquad \rho_2(a,b)=\diag(\ze_3^a,\ze_3^a,\ze_3^b).
    \]
\end{Proposition}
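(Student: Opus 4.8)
The plan is to reduce the statement to an elementary classification of triples of characters over $\FF_3$. Since $\ZZ_3^2$ is abelian of exponent $3$, the analytic representation splits as a sum of three one-dimensional characters, $\rho=\chi_1\oplus\chi_2\oplus\chi_3$ with $\chi_i\in\widehat{\ZZ_3^2}\cong\FF_3^2$; passing to another representative of the equivalence class permutes the $\chi_i$, while applying an automorphism of $\ZZ_3^2$ acts on $\FF_3^2$ through $\Aut(\ZZ_3^2)\cong\GL_2(\FF_3)$. First I would translate the three hypotheses into conditions on the triple: $\rho$ is faithful iff $\chi_1,\chi_2,\chi_3$ span $\FF_3^2$; by Proposition~\ref{prop:Rigid}, $\rho$ is rigid iff $\rho$ and $\overline\rho=(-\chi_1)\oplus(-\chi_2)\oplus(-\chi_3)$ have no common irreducible subrepresentation, i.e. $\chi_i\neq-\chi_j$ for all $i,j$ — in particular every $\chi_i\neq0$; and $p_g(X)=0$ means the volume form of $T$ is not $G$-invariant, i.e. $\det\rho=\chi_1\chi_2\chi_3$ is a nontrivial character, i.e. $\chi_1+\chi_2+\chi_3\neq0$ in $\FF_3^2$ (equivalently $p_g=\langle\wedge^3(\overline{\chi}),\chi_{triv}\rangle=0$, cf. the remarks in Section~\ref{sec:preliminaries}).

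Next I would run the case distinction according to the number of coincidences among $\chi_1,\chi_2,\chi_3$. If all three agree they span a line and $\rho$ is not faithful. If exactly two agree, say $\chi_1=\chi_2=v$ and $\chi_3=w\neq v$, then faithfulness forces $w\notin\FF_3v$, so $\{v,w\}$ is a basis; rigidity ($\chi_i=-\chi_j$ would require $w=-v\in\FF_3v$) and $p_g=0$ (here $\chi_1+\chi_2+\chi_3=2v+w=w-v\neq0$) hold automatically, and the unique $M\in\GL_2(\FF_3)$ with $Mv=e_1$, $Mw=e_2$ identifies $\rho$ with $\rho_2=(e_1,e_1,e_2)$. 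If all three are distinct, they cannot all lie on one line (a line of $\FF_3^2$ has only two nonzero points), and they cannot have exactly two on a line, since those two would be $v$ and $2v=-v$, contradicting rigidity; hence $\chi_1,\chi_2,\chi_3$ occupy three distinct lines.

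In that remaining configuration I would pick $M\in\GL_2(\FF_3)$ with $M\chi_1=e_1$ and $M\chi_2=e_2$; then $M\chi_3=ae_1+be_2$ with $a,b\in\FF_3^\times$ (as $\chi_3$ lies off the lines of $\chi_2$ and $\chi_1$). The condition $p_g=0$ rules out exactly $(a,b)=(-1,-1)$, for which $\chi_1+\chi_2+\chi_3=0$, leaving $(a,b)\in\{(1,1),(1,2),(2,1)\}$, and it remains to see these three triples lie in one orbit under permutation of the $\chi_i$ followed by renormalization, and that this orbit is that of $\rho_1=(e_1,e_2,e_1+e_2)$. The cleanest argument: the three lines occupied by $\chi_1,\chi_2,\chi_3$ form a $3$-element subset $\{\ell_1,\ell_2,\ell_3\}$ of the four lines of $\PP^1(\FF_3)$, on which $\GL_2(\FF_3)$ acts through $\PGL_2(\FF_3)\cong S_4$, so the subgroup stabilizing $\{\ell_1,\ell_2,\ell_3\}$ induces the full $S_3$ on it; choosing generators $u_i$ of $\ell_i$ with $u_1+u_2+u_3=0$, among the four sign patterns $(\varepsilon_1u_1,\varepsilon_2u_2,\varepsilon_3u_3)$ (up to overall sign) the one with $\varepsilon_1=\varepsilon_2=\varepsilon_3$ has zero coordinate sum — this is the excluded $p_g=1$ case — while the remaining three have coordinate sum on $\ell_1,\ell_2,\ell_3$ respectively, hence are permuted transitively by the residual $S_3$ and form a single orbit, represented by $(u_1,u_2,-u_3)=(u_1,u_2,u_1+u_2)$, i.e. by $\rho_1$.

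Finally $\rho_1\not\cong\rho_2$, even after twisting by $\Aut(\ZZ_3^2)$, since ``some character occurs with multiplicity $\geq2$'' is an invariant of the equivalence class; a routine check shows both $\rho_1$ and $\rho_2$ do satisfy the three hypotheses, so the dichotomy is sharp. The main step — and the only one needing a little care — is the orbit computation over $\FF_3$ in the three-distinct-lines case; if one prefers, it (and indeed the whole case analysis) can be replaced by a brute enumeration of the eight nonzero vectors of $\FF_3^2$.
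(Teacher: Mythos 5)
Your proof is correct and follows essentially the same route as the paper: decompose $\rho$ into three degree-one characters viewed in $\FF_3^2$, translate faithfulness, rigidity and $p_g=0$ into the conditions that the characters span, that no $\chi_i=-\chi_j$, and that $\chi_1+\chi_2+\chi_3\neq 0$, then normalize by $\Aut(\ZZ_3^2)\cong\GL_2(\FF_3)$. The only difference is in the endgame: where you run the $\PGL_2(\FF_3)\cong S_4$ orbit argument on the three surviving sign patterns, the paper simply sends $\chi_1,\chi_2$ to the standard basis, lists the admissible third characters, and exhibits the single explicit automorphism $(a,b)\mapsto(a,2a+b)$ identifying $\diag(\ze_3^a,\ze_3^b,\ze_3^{a+2b})$ with $\rho_1$.
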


\begin{proof}
	Let $\rho\colon \ZZ_3^2\hookrightarrow \GL(3,\CC)$ be such a representation. Write $\rho=\diag(\chi_1,\chi_2,\chi_3)$ with characters $\chi_j$ of degree 1. Since $\rho$ is faithful and the action is rigid, we can assume that $\chi_1$ and $\chi_2$ are linearly independent in the group of characters. In other words
 \[
    \rho(a,b)=\diag(\ze_3^a,\ze_3^b,\ze_3^{\lambda a+\mu b} )\qquad\makebox{for some}\: \lambda,\mu\in \{0,1,2\}.
 \]
    Due to the rigidity and $p_g(X)=0$, it holds $\rho=\rho_1,\ \rho_2$ or $\rho_3(a,b)=\diag(\ze_3^a,\ze_3^b,\ze_3^{a+2 b})$, up to a permutation of coordinates. Twisting $\rho_1$ by the automorphism $(a,b)\mapsto (a,2a+b)$ of $\ZZ_3^2$ gives a representation equivalent to $\rho_3$.
\end{proof}

\underline{The subcase $\rho=\rho_1$:}

For analyzing the situation where $\rho=\rho_1$, we choose the generators $h=(0,2)$ and $k=(1,1)$ for $\ZZ_3^2$. Note that the representation $\rho_1$ is obtained from the representation of $\ZZ_3^2$ in \cite{GK} by conjugating the third coordinate. We may assume that the translation part of $k$ is $\tau(k)=0$.
The classification of the quotients can be done in analogy to \cite{GK} and to the case $G=\ZZ_3^3$ above, so we only state the result:

\begin{Proposition}\label{prop:oneBihol}
    There are precisely 5 biholomorphism classes of rigid quotients of 3-dimensional tori by rigid actions of $\ZZ_3^2$  with analytic representation $$\rho_1(a,b)=\diag(\ze_3^a,\ze_3^b,\ze_3^{a+b}).$$ They are represented by $Y_3,Y_4,Y_{4'},Y_5$, and $Y_6$ of Theorem~\ref{theo:main}. The quotients $Y_4$ and $Y_{4'}$ are diffeomorphic via $(z_1,z_2,z_3)\mapsto (-z_1,\overline{z_3},\overline{z_2})$. All other quotients are pairwise not diffeomorphic.
\end{Proposition}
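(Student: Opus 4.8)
The plan is to imitate step by step the classification machinery already used for $G = \ZZ_3^3$ in Proposition~\ref{prop:ClassificationZ3^3}, adapted to the subgroup-type representation $\rho_1(a,b)=\diag(\ze_3^a,\ze_3^b,\ze_3^{a+b})$. First I would record the structural decomposition of the torus: since $\rho_1$ restricted to each of the three characters appearing in it forces multiplication by $\ze_3$ to be an automorphism on suitable subtori, the torus $T$ is equivariantly isogenous to $E^3/K$ with $E = \CC/\ZZ[\ze_3]$ Fermat's curve, and the kernel $K$ lies in $\Fix_{\ze_3}(E)^3 \cong \ZZ_3^3$ — this is exactly the content of \cite{GK}*{Proposition~4.7} invoked in Remark~\ref{rem:StructureTforZ3^3}, now applied to $\rho_1$ rather than to the $\ZZ_3^3$-representation. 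The relevant normalizer $\sN := N_{\Aut(E^3)}(\rho_1(\ZZ_3^2))$ is again a finite group, and it acts on the finite set of admissible kernels $K$; by Proposition~\ref{prop:classes}(2), tori with kernels in different $\sN$-orbits can never yield biholomorphic (or even diffeomorphic) quotients, so it suffices to pick one representative kernel per orbit.

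Next I would put the cocycle in standard form, fixing generators $h=(0,2)$ and $k=(1,1)$ with $\tau(k)=0$; as in Remark~\ref{rem:StructureTforZ3^3}, commutativity of $k$ with $h$ forces $\tau(h)\in\Fix_{\ze_3}(T)$, so the cocycle is described by a single point of $\Fix_{\ze_3}(T)$ subject to the relations $\tau(h^3)=0$ and $\tau(hk)=\tau(kh)$, which translate into explicit membership conditions in $K$. Then I would impose the "good" condition (isolated fixed locus): the only elements of $\ZZ_3^2$ whose $\rho_1$-image has eigenvalue $1$ are $h$, $hk$, $hk^2$ (and their inverses), so one writes down, exactly as in Lemma~\ref{le:good}, the finitely many conditions on $\tau(h)$ guaranteeing these act freely, i.e.\ that certain affine shifts of $\tau(h)$ do not appear as the appropriate coordinate of an element of $K$. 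At this point everything is a finite computation: for each of the kernel-orbit representatives, enumerate the admissible cocycles, discard the non-good ones, and then compute the orbits of the group $\sN_\CC(\Lam_K)=\{C\in\sN\mid CK=K\}$ acting on the good cohomology classes in $H^1(\ZZ_3^2, E^3/K)$ via $C\ast\tau$ modulo coboundaries — noting (as in Remark~\ref{rem:Cd}) that $\rho_1(k)=\ze_3\cdot\id$ forces the coboundary parameter $d$ into $\Fix_{\ze_3}(T)$, which makes the coboundary group fully explicit. This is precisely the MAGMA-style enumeration of Remark~\ref{rem:ActionsZ3^3} and Proposition~\ref{prop:ClassificationZ3^3}, and it will output the five classes $Y_3,Y_4,Y_{4'},Y_5,Y_6$ together with their baskets of singularities and the data of Table~\ref{table0}.

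It remains to decide the diffeomorphism classification among the five. For the non-diffeomorphic pairs I would appeal to invariants: Proposition~\ref{prop:classes}(2) together with $H^0(\ZZ_3^2,E^3/K)$ separates quotients living over non-isomorphic $G$-modules, and the fundamental groups $\pi_1(Y_i)$ (computed in Section~\ref{sec:FundGroups}) together with the explicit singularity baskets distinguish the remaining cases. For the one genuine coincidence $Y_4\simeq_{\diff}Y_{4'}$ I would simply exhibit the diffeomorphism $(z_1,z_2,z_3)\mapsto(-z_1,\overline{z_3},\overline{z_2})$ and verify it intertwines the two actions — i.e.\ that the real-linear matrix $C$ it induces lies in $\sN_\RR(\Lam_{K},\Lam_{K'})$ and that $C\ast\tau$ and $\tau'$ are cohomologous, which is conditions (a) and (b) of Proposition~\ref{prop:classes} and a short direct check. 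I expect the main obstacle to be organizational rather than conceptual: keeping the twisting automorphism of $\ZZ_3^2$ and the coordinate permutations consistent across the kernel representatives, and making sure the "good" conditions for $\rho_1$ (which differ from those for the $\ZZ_3^3$ case only by the specific linear parts of $h,hk,hk^2$) are transcribed correctly before handing the enumeration to the computer. Since the proof is stated to follow \cite{GK} "in analogy", I would present it at the level of stating the reduction to the normalizer-orbit computation and recording the output, with the explicit diffeomorphism for $Y_4\simeq_{\diff}Y_{4'}$ written out.
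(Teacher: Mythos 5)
Your proposal is correct and follows essentially the same route the paper takes: the paper's own proof of this proposition is precisely the reduction to the $E^3/K$ decomposition, standard-form cocycles, good-cocycle conditions, and normalizer-orbit enumeration that you describe, carried out in analogy with the $\ZZ_3^3$ case and \cite{GK}. The only difference is that the paper additionally observes that $\rho_1$ is obtained from the $\ZZ_3^2$-representation of \cite{GK} by conjugating the third coordinate, which lets it import that classification almost verbatim rather than rerunning the enumeration from scratch, but this is a shortcut within the same method, not a different argument.
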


\underline{The subcase $\rho=\rho_2$:}

It remains to analyze the case, where the analytic representation is given by $\rho=\rho_2$. Here, the situation is different since $\rho_2$ contains two times the same character. This time, we choose $h:=(1,0)$ and $k:=(1,1)$ as generators of $\ZZ_3^2$.

\begin{rem}\label{rem:StructureTforZ3^2}
Consider the subtori $E_3:=\ker(\rho(h)-\id_T)^0$ and $T':=\ker(\rho(h^2k)-\id_T)^0$ of $T$. Then the addition map
\[
	\mu\colon T'\times E_3\longrightarrow T
\]
defines an equivariant isogeny. As $\ze_3$ acts on $E_1$, this curve is isomorphic to $E=\CC/\ZZ[\ze_3]$. The action of $\rho$ restricted to $T'$ is given by $\ze_3^a\cdot\id_{T'}$, hence, $T'$ is equivariantly isomorphic to $E^2$ (cf. \cite{BL}*{Corollary~13.3.5}).\\
 In summary, we have that $T$ is equivariantly isomorphic to $E^3/K$ and the maps
 \[
 	E_3\hookrightarrow T=(T'\times E_3)/K \qquad \makebox{and}\qquad T'\hookrightarrow T=(T'\times E_3)/K 
 \]
 are injective. In particular, we can assume without loss of generality that $T$ is of the form $E^3/K$, where $K$ is finite and does not contain non-zero elements of the form $\lambda e_3$ or $\mu e_1+\tau e_2$.
\end{rem}
 
\begin{Lemma}
	The kernel $K$ of the addition map is contained in $\Fix_{\ze_3}(E)^3$.
\end{Lemma}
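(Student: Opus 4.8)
The plan is to combine the $\rho$-invariance of $K$ with the two constraints recorded in Remark~\ref{rem:StructureTforZ3^2}: after the normalization made there, $T=E^3/K$ with $K$ finite and containing \emph{no} non-zero element of the form $\lambda e_3$ (because $E_3\hookrightarrow T$) and \emph{no} non-zero element of the form $\mu e_1+\tau e_2$ (because $T'\hookrightarrow T$). Since the $\ZZ_3^2$-action descends to $T=E^3/K$, its linear part $\rho$ satisfies $\rho(u)K=K$ for all $u\in\ZZ_3^2$; hence $K$ is stable under every difference $\rho(u)-\rho(u')$, viewed as a diagonal endomorphism of $E^3$ (it preserves $\ZZ[\ze_3]^3$, so it descends).

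Concretely, with the generators $h=(1,0)$ and $k=(1,1)$ chosen above and $\rho=\rho_2$, one has $\rho(h)=\diag(\ze_3,\ze_3,1)$ and $\rho(k)=\diag(\ze_3,\ze_3,\ze_3)$, so
\[ \rho(h)-\id=\diag(\ze_3-1,\ \ze_3-1,\ 0),\qquad \rho(k)-\rho(h)=\diag(0,\ 0,\ \ze_3-1). \]
Given $v=(v_1,v_2,v_3)\in K$, I would first note that $(\rho(h)-\id)v=\bigl((\ze_3-1)v_1,(\ze_3-1)v_2,0\bigr)\in K$ has vanishing third coordinate, i.e.\ it is of the form $\mu e_1+\tau e_2$, hence is $0$; since multiplication by $\ze_3-1$ has kernel exactly $\Fix_{\ze_3}(E)$ on $E$, this yields $v_1,v_2\in\Fix_{\ze_3}(E)$. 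Symmetrically, $(\rho(k)-\rho(h))v=\bigl(0,0,(\ze_3-1)v_3\bigr)\in K$ is of the form $\lambda e_3$, hence is $0$, so $v_3\in\Fix_{\ze_3}(E)$. Thus $K\subseteq\Fix_{\ze_3}(E)^3$, as claimed.

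I do not foresee a genuine difficulty here: this is the same mechanism used in the diagonal cases treated before (and in \cite{GK}), the only new feature being that the character $\ze_3^a$ occurs twice, so the single operator $\rho(h)-\id$ already constrains the first two coordinates simultaneously. The one point to keep straight is to apply the two ``forbidden support'' statements of Remark~\ref{rem:StructureTforZ3^2} to the correct coordinate blocks, namely the images of $T'$ and $E_3$ under the identification $E^3=T'\times E_3$ fixed there --- which is precisely the reason those subtori were singled out.
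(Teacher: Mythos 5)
Your proof is correct and follows essentially the same route as the paper: apply the diagonal operators $\rho(h)-\id=\diag(\ze_3-1,\ze_3-1,0)$ and $\diag(0,0,\ze_3-1)$ (you write the latter as $\rho(k)-\rho(h)$, the paper as $\rho(h^2k)-\id$ --- the same matrix) to an element of $K$, and invoke the two forbidden-support conditions from Remark~\ref{rem:StructureTforZ3^2} to force the images to vanish, hence the coordinates to lie in $\Fix_{\ze_3}(E)=\ker(\ze_3-1)$. No gaps.
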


\begin{proof}
	Let $(t_1,t_2,t_3)\in K$. For $u\in \ZZ_3^2$, we view $\rho(u)$ as an automorphism of $E^3=T'\times E_3$ mapping $K$ to itself. For $u=h$ and $h^2k$, we therefore get that the elements
	\begin{align*}
		(\rho(h)-\id_T)(t)&=((\ze_3-1)t_1,(\ze_3-1)t_2,0)\qquad \makebox{and}\\
		(\rho(h^2k)-\id_T)(t)&=(0,0,(\ze_3-1)t_3)
	\end{align*}
	belong to $K$. This implies that $((\ze_3-1)t_1,(\ze_3-1)t_2)=0$ in $T'=E^2$ and $(\ze_3-1)t_3=0$ in $E_3=E$.  Thus, $(t_1,t_2,t_3)\in \Fix_{\ze_3}(E)^3$.
\end{proof}

Let $\Phi\colon \ZZ_3^2\hookrightarrow \Bihol(T)$ be a faithful action with analytic representation $\rho=\rho_2$. Then, up to a change of the origin in $T$, the translation part $\tau\colon \ZZ_3^2\to T$ can be written as 
\[
\tau(h)= \left( a_1, \ a_2, \ a_3\right), \quad
\tau(k)= \left( 0, \ 0, \ 0\right).
\]
As above, we say that the cocycle is in \emph{standard form}. Since $\tau(h^3)$ and $\tau((hk^2)^3)$ are zero in $T$, the elements $a_1,a_2$ and $a_3$ belong to $E[3]$. Using furthermore that $\tau(hk)=\tau(kh)$, we obtain:

\begin{Lemma}\label{le:welldefinedZ3^2 rho3}
	Let $\tau$ be a cocycle in standard form. Then the vector $$v:=(\ze_3-1)\cdot (a_1,\ a_2,\ a_3)$$ is zero in $T$, i.e. $(a_1,a_2,a_3)$ is fixed by $\ze_3\cdot \id$. Conversely, given $a_1,a_2,a_3\in E[3]$ such that $v=0$ in $T$, we obtain a cocycle $\tau\colon \ZZ_3^2\to T$ in standard form.
\end{Lemma}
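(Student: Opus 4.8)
The plan is to verify the two defining relations of $\ZZ_3^2 = \langle h,k\rangle$ at the level of the translation cocycle, exactly as was done for the case $G=\ZZ_3^3$. Recall that a cocycle must satisfy $\tau(gg') = \rho(g)\tau(g') + \tau(g)$ for all $g,g'$, and since $\rho = \rho_2$ is fixed, a function $\tau$ defined on the generators extends to a cocycle on all of $\ZZ_3^2$ precisely when it respects the relations $h^3 = 1$, $k^3 = 1$ and $hk = kh$ of the group. With the normalization $\tau(k) = 0$ the relation $k^3=1$ is automatic, since $\tau(k^3) = (\rho(k)^2 + \rho(k) + \id)\tau(k) = 0$. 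For the relation $h^3 = 1$, I would compute $\tau(h^3) = (\rho_2(h)^2 + \rho_2(h) + \id)\tau(h)$; since $\rho_2(h) = \diag(\ze_3,\ze_3,1)$ one gets $\rho_2(h)^2+\rho_2(h)+\id = \diag(0,0,3)$, so $\tau(h^3) = 0$ in $T = \CC^3/\Lam$ forces $3a_1, 3a_2 \in \Lam$ componentwise and $3a_3 \in \Lam$, which — using that $E$ embeds in each relevant factor and $K \subset \Fix_{\ze_3}(E)^3$ as established in the preceding lemmas — translates to $a_1, a_2, a_3 \in E[3]$. This is what is asserted in the sentence preceding the lemma statement, so I would treat it as already in place and focus on the commutator relation.

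For the relation $hk = kh$, the condition $\tau(hk) = \tau(kh)$ is the only nontrivial constraint. Compute both sides using the cocycle rule:
\[
\tau(hk) = \rho_2(h)\tau(k) + \tau(h) = \tau(h), \qquad \tau(kh) = \rho_2(k)\tau(h) + \tau(k) = \ze_3\cdot\tau(h),
\]
where I used $\tau(k) = 0$ and $\rho_2(k) = \diag(\ze_3,\ze_3,\ze_3) = \ze_3\cdot\id$. Hence $\tau(hk) = \tau(kh)$ holds in $T$ if and only if $(\ze_3 - 1)\cdot(a_1,a_2,a_3) = 0$ in $T$, i.e. $v = 0$, which is exactly $(a_1,a_2,a_3) \in \Fix_{\ze_3}(T)$. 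This proves the forward direction.

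For the converse, suppose $a_1,a_2,a_3 \in E[3]$ with $v = (\ze_3-1)(a_1,a_2,a_3) = 0$ in $T$. Define $\tau(k) = 0$ and $\tau(h) = (a_1,a_2,a_3)$, and extend to all of $\ZZ_3^2$ by the cocycle formula along words in $h,k$. The extension is well-defined precisely because the three group relations are respected: $k^3=1$ and $h^3=1$ hold by the computations above (the latter uses $a_i \in E[3]$, noting that the third component contributes $3a_3 \in \Lam$ while the first two contribute $0$ since $\rho_2(h)$ has $\ze_3$ in those slots), and $hk = kh$ holds by the assumption $v = 0$. Thus $\tau$ is a genuine $1$-cocycle in standard form, completing the proof. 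I do not expect any real obstacle here: the content is entirely the short linear-algebra check that $(\rho_2(h)^2 + \rho_2(h) + \id)$ and $(\rho_2(k) - \id) = (\ze_3-1)\id$ act as claimed, together with the already-established structural facts about $T$ and $K$. The only point requiring a little care is making sure the relation $h^3=1$ is genuinely implied by $a_i \in E[3]$ rather than needing $v=0$ as well, which is why the two conditions in the lemma statement are stated separately.
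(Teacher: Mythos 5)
Your overall strategy is exactly the paper's: the lemma is proved by checking the relations of $\ZZ_3^2=\langle h,k\rangle$ against the cocycle condition, and your computation of the commutator constraint is correct --- with $\rho_2(k)=\ze_3\cdot\id$ and $\tau(k)=0$ one gets $\tau(hk)=\tau(h)$ and $\tau(kh)=\ze_3\tau(h)$, so $hk=kh$ is equivalent to $v=(\ze_3-1)\tau(h)=0$ in $T$, and the converse direction is also handled correctly.

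There is, however, one false step in your discussion of $h^3=1$. Since $\rho_2(h)=\diag(\ze_3,\ze_3,1)$, you correctly compute $\rho_2(h)^2+\rho_2(h)+\id=\diag(0,0,3)$, but then $\tau(h^3)=(0,0,3a_3)$, so the relation $h^3=1$ constrains \emph{only} $a_3$ (using that $K$ contains no nonzero element of the form $\lambda e_3$, it forces $a_3\in E[3]$); it says nothing about $a_1,a_2$, contrary to your claim that it ``forces $3a_1,3a_2\in\Lam$.'' You in fact contradict this yourself in the converse direction, where you correctly note that the first two slots contribute $0$. The paper obtains $a_1,a_2\in E[3]$ from the \emph{additional} relation $(hk^2)^3=1$: one has $hk^2=(0,2)$, $\rho_2(hk^2)=\diag(1,1,\ze_3^2)$ and $\tau(hk^2)=\tau(h)$, so $\tau((hk^2)^3)=(3a_1,3a_2,0)$, and the structure of $K$ (no nonzero elements supported on the first two factors) gives $a_1,a_2\in E[3]$. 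Since the lemma's forward assertion is only $v=0$ and you explicitly defer the membership $a_i\in E[3]$ to the sentence preceding the lemma, your proof of the lemma as stated survives; but the justification you sketch for that standing hypothesis is wrong as written and should be replaced by the $(hk^2)^3=1$ argument.
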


\begin{Lemma}\label{le:good Z3^2 rho}
	A cocycle $\tau\colon \ZZ_3^2\to T$ in standard form is good, i.e., the corresponding action has isolated fixed points,  if and only if  $K$ contains no elements of the form $(\ast,\ast,a_3)$ or $(a_1,a_2,\ast)$.
\end{Lemma}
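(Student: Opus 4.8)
The plan is to reduce being \emph{good} to the freeness of two explicit elements of $\ZZ_3^2$, and then to translate that freeness into the stated conditions on $K$ by a direct fixed-point computation on $E^3$. As in the $\ZZ_3^3$ case treated above, a cocycle is good precisely when every element $u\in\ZZ_3^2$ whose linear part $\rho(u)$ has $1$ as an eigenvalue acts freely: if $\rho(u)$ has no eigenvalue $1$ then $\Fix(\Phi(u))$ is automatically finite, whereas if $\rho(u)$ has eigenvalue $1$ and $\Phi(u)$ is not free, then $\Fix(\Phi(u))$ is a non-empty union of translates of the positive-dimensional subtorus $\ker(\rho(u)-\id)^0$, so $X$ would have non-isolated singularities. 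For the generators $h=(1,0)$, $k=(1,1)$ and $\rho=\rho_2$, a quick inspection of the eight non-trivial elements shows that those carrying eigenvalue $1$ are exactly $h,h^2,hk^2,h^2k$ (e.g.\ $\rho(hk^2)=\rho_2(0,2)=\diag(1,1,\ze_3^2)$). Since $h^2=h^{-1}$ and $h^2k=(hk^2)^{-1}$, the pairs $\Phi(h),\Phi(h^2)$ and $\Phi(hk^2),\Phi(h^2k)$ have equal fixed loci, so goodness is equivalent to the freeness of $\Phi(h)$ and $\Phi(hk^2)$ alone.

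Next I would compute these two fixed loci. From $\tau(k)=0$ and the cocycle identity one gets $\tau(hk^2)=\rho(h)\tau(k^2)+\tau(h)=\tau(h)=(a_1,a_2,a_3)$, so that $\Phi(h)(z)=\diag(\ze_3,\ze_3,1)z+(a_1,a_2,a_3)$ and $\Phi(hk^2)(z)=\diag(1,1,\ze_3^2)z+(a_1,a_2,a_3)$. A point with lift $(z_1,z_2,z_3)\in E^3$ is fixed by $\Phi(h)$ iff $((\ze_3-1)z_1+a_1,\,(\ze_3-1)z_2+a_2,\,a_3)\in K$, and fixed by $\Phi(hk^2)$ iff $(a_1,\,a_2,\,(\ze_3^2-1)z_3+a_3)\in K$. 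Since $\ze_3-1$ and $\ze_3^2-1$ are nonzero elements of $\ZZ[\ze_3]$ (each of norm $3$), multiplication by them is a surjective isogeny of $E=\CC/\ZZ[\ze_3]$; hence $(\ze_3-1)z_i+a_i$ runs over all of $E$ as $z_i$ does, and likewise $(\ze_3^2-1)z_3+a_3$ runs over all of $E$. Therefore $\Phi(h)$ has a fixed point iff $K$ contains an element of the form $(\ast,\ast,a_3)$, and $\Phi(hk^2)$ has a fixed point iff $K$ contains an element of the form $(a_1,a_2,\ast)$.

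Putting the two steps together, $\tau$ is good iff neither $\Phi(h)$ nor $\Phi(hk^2)$ has a fixed point, i.e.\ iff $K$ contains no element of the form $(\ast,\ast,a_3)$ and no element of the form $(a_1,a_2,\ast)$, which is the assertion. I do not expect a real obstacle here: the argument is a short piece of bookkeeping combined with the elementary surjectivity of multiplication by $\ze_3-1$ on $E$. The only point needing care is keeping track, in the chosen generators $h,k$, of exactly which elements of $\ZZ_3^2$ have eigenvalue $1$ in their linear part — and hence must be required to act freely — which is why the criterion involves precisely the two elements $h$ and $hk^2$.
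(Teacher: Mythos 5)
Your proof is correct and follows essentially the same route as the paper: the paper likewise reduces goodness to the freeness of $h$ and $hk^2$ (the only elements, up to inversion, whose linear parts under $\rho_2$ have eigenvalue $1$) and then reads off the conditions on $K$ from the same fixed-point computation, carried out in the proof of Lemma~\ref{le:good}. Your write-up merely makes explicit the bookkeeping (the list of elements with eigenvalue $1$, the identity $\tau(hk^2)=\tau(h)$, and the surjectivity of multiplication by $\ze_3-1$ on $E$) that the paper leaves implicit.
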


\begin{proof}
     The corresponding action has isolated fixed points if and only if $h$ and $hk^2$ act freely. We conclude as in the proof of Lemma~\ref{le:good}.
\end{proof}

Similar to \cite{GK}*{Proposition~4.19}, we see:

\begin{Proposition}\label{prop:norm}
	Every biholomorphism $f\colon X\to X'$ between two quotients by $\ZZ_3^2$, where the actions have isolated fixed points and linear part $\rho_2$, is induced by a biholomorphic map $$\hat{f}\colon E^3\to E^3,\:z\mapsto Cz+d,$$ such that $CK=K'$. This means that $C$ is contained in the normalizer group
	\[
		\sN:=N_{Aut(E^3)}(\rho(\ZZ_3^2))=\left\{ \begin{pmatrix}
			C' & 0\\0 &c 
		\end{pmatrix}\big\vert\, c\in \langle -\ze_3\rangle,\: C'\in \GL(2,\ZZ[\ze_3])\right\}.
	\]
	In particular, it holds:
	\[
		\sN_\CC(\Lambda_K)=\{C\in N_{\Aut(E^3)}(\rho(\ZZ_3^2))\mid CK=K\}.
	\]
\end{Proposition}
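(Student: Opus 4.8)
The plan is to mimic the proof of the analogous statement for $\ZZ_3^3$ (Proposition~\ref{prop:normalizer}), which itself follows the pattern of \cite{GK}*{Proposition~4.19}. First I would invoke Proposition~\ref{prop:ConsBieb}: any biholomorphism $f\colon X\to X'$ lifts to a biholomorphism $\hat f\colon T\to T'$ of the tori, which is affine, say $\hat f(z)=Cz+d$. Identifying both tori with quotients $E^3/K$ and $E^3/K'$ as in Remark~\ref{rem:StructureTforZ3^2}, the map $\hat f$ lifts further to an affine map $\CC^3\to\CC^3$, still denoted $Cz+d$, with $CK=K'$ (and in particular $C\Lambda_K=\Lambda_{K'}$). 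By Proposition~\ref{prop:classes}(a), $C$ conjugates $\im(\rho_2)$ to $\im(\rho_2)$ up to an automorphism $\varphi_C$ of $\ZZ_3^2$, so $C\in N_{\Aut(E^3)}(\rho_2(\ZZ_3^2))$. It remains to compute this normalizer explicitly.

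The key computation is therefore: determine $N_{\Aut(E^3)}(\rho_2(\ZZ_3^2))$. Here $\rho_2(a,b)=\diag(\ze_3^a,\ze_3^a,\ze_3^b)$, so the image consists of the six matrices $\diag(\omega,\omega,\omega')$ with $\omega,\omega'\in\langle\ze_3\rangle$. Writing $C=(c_{ij})\in\GL(3,\ZZ[\ze_3])$ (recall $\Aut(E^3)=\GL(3,\ZZ[\ze_3])$), I would impose that $C$ conjugates each such diagonal matrix to another one. The element $\rho_2(0,1)=\diag(1,1,\ze_3)$ has eigenspaces $\CC^2\oplus 0$ (eigenvalue $1$) and $0\oplus\CC$ (eigenvalue $\ze_3$); conjugation by $C$ must permute the eigenspace decompositions of all elements of $\im(\rho_2)$. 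Since the only eigenspace splittings occurring are $\{\langle e_1,e_2\rangle,\langle e_3\rangle\}$ (from $\rho_2(0,1)$) and the trivial one (from $\rho_2(1,0)=\ze_3\cdot\id$ and the identity), $C$ must preserve $\langle e_1,e_2\rangle$ and $\langle e_3\rangle$, hence be block-diagonal $\begin{pmatrix} C' & 0\\ 0 & c\end{pmatrix}$ with $C'\in\GL(2,\ZZ[\ze_3])$ and $c\in\ZZ[\ze_3]^\ast=\langle-\ze_3\rangle$. Conversely, any such block matrix visibly normalizes $\im(\rho_2)$ (it commutes with $\diag(\omega,\omega,1)$ and sends $\diag(1,1,\ze_3)$ to itself). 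This gives the displayed description of $\sN$, and then $\sN_\CC(\Lambda_K)=\{C\in\sN\mid CK=K\}$ is immediate from the definition of $\sN_\CC$ together with the fact that $C$ must also satisfy $CK=K'=K$ when $X$ and $X'$ are quotients of the same torus.

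I do not expect any serious obstacle here; the statement is essentially a routine normalizer computation, and the reference to \cite{GK}*{Proposition~4.19} and to the parallel Proposition~\ref{prop:normalizer} indicates that the argument is meant to be brief. The only point requiring a little care is checking that the eigenspace decomposition argument genuinely forces block-diagonality over $\ZZ[\ze_3]$ (and not merely over $\CC$) — but since $C$ has entries in $\ZZ[\ze_3]$ to begin with and the eigenspaces are coordinate subspaces, this is automatic. A secondary point is justifying that the lift $\hat f$ can be taken with linear part in $\Aut(E^3)$ rather than only inducing an isomorphism of the quotients $E^3/K$; this follows because $\hat f$ lifts to the common universal cover $\CC^3$ and must carry the lattice $\Lambda_K$ isomorphically onto $\Lambda_{K'}$, hence restricts to an automorphism of $E^3=\CC^3/\ZZ[\ze_3]^3$ sending $K$ to $K'$, exactly as in the $\ZZ_3^3$ case.
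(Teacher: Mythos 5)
Your proof is correct and follows essentially the route the paper intends: the paper omits the argument, deferring to the analogous \cite{GK}*{Proposition~4.19}, and your reconstruction --- lift $f$ via Proposition~\ref{prop:ConsBieb} to an affine map of the tori and then of $\CC^3$, and force block-diagonality of $C$ by the eigenspace decomposition of the non-scalar elements of $\im(\rho_2)$ --- is exactly that argument. Two trivial points: $\im(\rho_2)$ consists of nine matrices $\diag(\omega,\omega,\omega')$, not six; and the final claim that $C$ preserves $\ZZ[\ze_3]^3$ uses the $\CC$-block-diagonality together with the normalization of $K$ from Remark~\ref{rem:StructureTforZ3^2} (no nonzero element of $K$ lies in $\langle e_1,e_2\rangle$ or $\langle e_3\rangle$, whence $\Lambda_K\cap\langle e_1,e_2\rangle_\CC=\ZZ[\ze_3]^2\oplus 0$ and $\Lambda_K\cap\langle e_3\rangle_\CC=0\oplus\ZZ[\ze_3]$), which is indeed ``as in the $\ZZ_3^3$ case'' as you indicate.
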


\begin{Remark}
	Note that in this situation, the normalizer group is infinite. Nevertheless, it acts on the finite set of potential kernels $\sK$ and it suffices to consider one kernel in each orbit. Furthermore, quotients of tori with kernels in different orbits cannot be biholomorphic.
\end{Remark}

\begin{Lemma}\label{le:kernelsZ3^2}
    The kernel $K$ is either trivial or belongs to the orbit of $K_1:=\langle(t,t,t)\rangle$.
\end{Lemma}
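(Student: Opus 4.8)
The plan is to exploit the constraints on $K$ coming from Remark~\ref{rem:StructureTforZ3^2} and the $\sN$-action described in Proposition~\ref{prop:norm}, together with the requirement that some \emph{good} cocycle exists on $T=E^3/K$. First I would recall that $K\subseteq \Fix_{\ze_3}(E)^3\simeq\ZZ_3^3$ by the Lemma above, so $K$ is an $\FF_3$-subspace of $\ZZ_3^3=\langle e_1,e_2,e_3\rangle$, and that by Remark~\ref{rem:StructureTforZ3^2} $K$ contains no non-zero element of the form $\lambda e_3$ or $\mu e_1+\nu e_2$. This already rules out $K=\ZZ_3^3$ and every $2$-dimensional subspace (each $2$-plane in $\FF_3^3$ meets the plane $\langle e_1,e_2\rangle$ non-trivially), so $\dim_{\FF_3}K\in\{0,1\}$.

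It remains to treat the case $\dim_{\FF_3}K=1$, say $K=\langle(r_1,r_2,r_3)\rangle$ with $(r_1,r_2,r_3)\in\Fix_{\ze_3}(E)^3$. Writing $t:=(1+2\ze_3)/3$ so that $\Fix_{\ze_3}(E)=\{0,t,2t\}$, the generator has the shape $(\alpha t,\beta t,\gamma t)$ with $(\alpha,\beta,\gamma)\in\FF_3^3\setminus\{0\}$. The forbidden-element conditions force $\gamma\neq 0$ (otherwise $K$ contains $\mu e_1+\nu e_2$) and $(\alpha,\beta)\neq(0,0)$ (otherwise $K$ contains $\gamma e_3$). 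Next I would use the normalizer action: by Proposition~\ref{prop:norm}, $\sN$ contains all matrices $\begin{pmatrix}C'&0\\0&c\end{pmatrix}$ with $C'\in\GL(2,\ZZ[\ze_3])$ and $c\in\langle-\ze_3\rangle$, and two kernels in the same $\sN$-orbit give biholomorphic (in particular: equivalent for our purposes) families. Acting by $C'\in\GL(2,\ZZ[\ze_3])$ on the first two coordinates, the subgroup $C'$ acts on the $3$-torsion $E[3]^2\simeq\FF_3^2$ through $\GL(2,\FF_3)$, hence transitively on non-zero vectors; so I can normalize $(\alpha,\beta)$ to $(1,0)$ — or I should double-check: since $(\alpha,\beta)\ne 0$ we may bring it to $(1,0)$, giving generator $(t,0,\gamma t)$ with $\gamma\in\{1,2\}$. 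Finally, scaling the third coordinate by $c=-\ze_3\in\langle-\ze_3\rangle$ — which fixes $\Fix_{\ze_3}(E)$ setwise but permutes $t\leftrightarrow 2t$ (since $-\ze_3\cdot t = 2t$) — I can normalize $\gamma=1$, so $K$ lies in the $\sN$-orbit of $\langle(t,t,t)\rangle=K_1$. A permutation of the first two coordinates (which is in $\GL(2,\ZZ[\ze_3])$) then confirms $(t,t,t)$ is in the same orbit independently of the original placement of the zero entry among the first two slots.

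The main obstacle I anticipate is being careful about exactly which finite group acts on the set of potential kernels: one must check that the reduction of $\GL(2,\ZZ[\ze_3])$ modulo the relevant torsion level really does act as $\GL(2,\FF_3)$ on $\Fix_{\ze_3}(E)^2$, and that $-\ze_3$ acts on $\Fix_{\ze_3}(E)=\{0,t,2t\}$ by swapping $t$ and $2t$ (so that $\gamma=1$ and $\gamma=2$ are identified). Once these two elementary facts are verified, the orbit count collapses to exactly the two cases $\{0\}$ and $\langle(t,t,t)\rangle$, and the forbidden-element conditions from Remark~\ref{rem:StructureTforZ3^2} are exactly what excludes everything of dimension $\ge 2$, completing the proof.
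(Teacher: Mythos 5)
Your proposal is correct and follows essentially the same route as the paper: rule out $\dim_{\FF_3}K\ge 2$ via the forbidden elements $\mu e_1+\nu e_2$, then use the block-diagonal normalizer $\sN$ to collapse all admissible lines to the orbit of $\langle(t,t,t)\rangle$ (the paper does this with two explicit unipotent matrices rather than invoking transitivity of $\GL(2,\FF_3)$ on $\FF_3^2\setminus\{0\}$). In fact your treatment is slightly more complete, since you also normalize the third coordinate by $c=-\ze_3$ (swapping $t\leftrightarrow 2t$), which accounts for the admissible lines such as $\langle(2t,0,t)\rangle$ that the paper's list of three representatives leaves implicit; just note that the relevant module is $\Fix_{\ze_3}(E)^2\simeq\FF_3^2$ rather than $E[3]^2$.
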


\begin{proof}
	Each 2-dimensional subspace of $\Fix_{\ze_3}(E)^3$ contains a non-trivial  element of the form $\mu e_1+ \tau e_2$ and can therefore be excluded as a kernel. The only 1-dimensional subspaces without such elements are $\langle(t,0,t)\rangle$,  $\langle(0,t,t)\rangle$ and  $\langle(t,t,t)\rangle$. 
    The first and the second are mapped to the third by
	\[
		\begin{pmatrix}
			1 & 0 & \\ 1& 1 & \\ &&1
		\end{pmatrix}\qquad \makebox{and} \qquad \begin{pmatrix}
		 1 & 1 & \\ 0 & 1 & \\ &&1
	\end{pmatrix},
	\]
    respectively. Thus, they all belong to the $\sN$-orbit of $\langle(t,t,t)\rangle$.
\end{proof}

\begin{Proposition}\label{prop:Classification2Z3^2}
    There are precisely 2 biholomorphism classes of rigid quotients of 3-dimensional tori by rigid actions of $\ZZ_3^2$  with analytic representation $$\rho_2(a,b)=\diag(\ze_3^a,\ze_3^a,\ze_3^b).$$ For each torus $T_0=E^3/K_0$ and $T_1=E^3/K_1$, there is one class, they are represented by $Y_7$ and $Y_8$ of Theorem~\ref{theo:main} and are not diffeomorphic.
\end{Proposition}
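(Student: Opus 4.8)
The plan is to carry out, for the analytic representation $\rho=\rho_2$, exactly the scheme already used for $\ZZ_3^3$ in Proposition~\ref{prop:ClassificationZ3^3} and for $\rho=\rho_1$. By Lemma~\ref{le:kernelsZ3^2} every torus carrying a rigid translation-free action with linear part $\rho_2$ and isolated fixed points is equivariantly isomorphic to $T_0=E^3$ or to $T_1=E^3/K_1$ with $K_1=\langle(t,t,t)\rangle$, and quotients built on kernels in different $\sN$-orbits are never biholomorphic (remark after Proposition~\ref{prop:norm}); so the two tori are treated separately. For each of them I would first enumerate the cocycles in standard form: by Lemma~\ref{le:welldefinedZ3^2 rho3} these are the triples $(a_1,a_2,a_3)\in E[3]^3$ with $(\ze_3-1)(a_1,a_2,a_3)\in K$, a finite set, and then retain the \emph{good} ones via Lemma~\ref{le:good Z3^2 rho}, i.e. those for which $K$ contains no element of the form $(\ast,\ast,a_3)$ or $(a_1,a_2,\ast)$. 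Passing to classes in $H^1(\ZZ_3^2,T)$ only requires identifying the coboundaries $(\rho(u)-\id)d$; evaluating this at $k$, where $\rho(k)=\ze_3\cdot\id$, forces $d\in\Fix_{\ze_3}(T)$.

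The biholomorphism classes are then the orbits of $\sN_\CC(\Lambda_K)=\{C\in\sN\mid CK=K\}$ on the set of good cohomology classes, by Proposition~\ref{prop:classes} and Proposition~\ref{prop:norm}. A preliminary observation trivializes the twisting automorphism: every $C\in\sN$ induces $\varphi_C=\id$ on $\ZZ_3^2$, since $\rho(k)=\ze_3\cdot\id$ is scalar and hence conjugation-invariant, while in the character $2\chi_1+\chi_2$ of $\rho_2$ the character $\chi_1$ occurs with multiplicity two and $\chi_2$ with multiplicity one, so any automorphism preserving $\rho_2$ must fix both $\chi_1$ and $\chi_2$; thus $C\ast\tau=C\cdot\tau$. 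Reducing the block description of $\sN$ from Proposition~\ref{prop:norm} modulo the prime $(1-\ze_3)$ of $\ZZ[\ze_3]$, one sees that $\sN$ acts on $\Fix_{\ze_3}(E)^3\cong\FF_3^3$ through a surjection onto $\GL(2,\FF_3)$ on the first two coordinates and $\{\pm 1\}$ on the last. For $T_0=E^3$ one has $\Fix_{\ze_3}(T_0)=\Fix_{\ze_3}(E)^3$, so all coboundaries vanish; the good cocycles are precisely the triples in $\Fix_{\ze_3}(E)^3$ with $(a_1,a_2)\neq 0$ and $a_3\neq 0$, and transitivity of $\GL(2,\FF_3)$ on $\FF_3^2\setminus\{0\}$ and of $\{\pm 1\}$ on $\FF_3\setminus\{0\}$ shows that these form a single orbit, giving exactly one biholomorphism class, realized by $Y_7$. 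For $T_1=E^3/K_1$ the stabilizer $\sN_\CC(\Lambda_{K_1})$ is a proper subgroup and the coboundaries are no longer all trivial; here I would, as in Remark~\ref{rem:ActionsZ3^3}, run the MAGMA implementation to enumerate the good classes in $H^1(\ZZ_3^2,E^3/K_1)$ and verify that they constitute a single orbit, realized by $Y_8$.

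Finally, to separate $Y_7$ and $Y_8$ even up to homeomorphism I would apply Proposition~\ref{prop:classes}(2): a homeomorphism would force $T_0$ and $T_1$ to be isomorphic as $\ZZ_3^2$-modules up to an automorphism of the group, hence to have isomorphic fixed-point subgroups. But $H^0(\ZZ_3^2,T_0)=\Fix_{\ze_3}(E)^3\cong\ZZ_3^3$, whereas $K_1\subseteq\Fix_{\ze_3}(E)^3$ — one checks $(\ze_3-1)t=\ze_3^2\in\ZZ[\ze_3]$, so $t\in\Fix_{\ze_3}(E)$ — and therefore $H^0(\ZZ_3^2,T_1)=\Fix_{\ze_3}(E)^3/K_1\cong\ZZ_3^2$; since $\ZZ_3^3\not\cong\ZZ_3^2$, no homeomorphism can exist. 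The rigidity of all the resulting quotients is automatic from Proposition~\ref{prop:Rigid}, because $\rho_2$ and $\overline{\rho_2}$ share no subrepresentation. The only genuine obstacle is computational: the finite but slightly fiddly enumeration of good cocycles on $T_1$, together with the orbit count under $\sN_\CC(\Lambda_{K_1})$ and the correct accounting of its coboundaries; conceptually the argument is a faithful transcription of the $\ZZ_3^3$ case.
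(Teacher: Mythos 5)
Your proposal is correct and follows essentially the same route as the paper: separate the two tori via the invariants $H^0(\ZZ_3^2,T_i)$, note that every $C\in\sN$ commutes with $\rho_2$ (so $\varphi_C=\id$) and that coboundaries force $d\in\Fix_{\ze_3}(T)$, and then show the normalizer acts transitively on the good cohomology classes for each torus. The only cosmetic differences are that for $T_0$ you argue transitivity via the surjection onto $\GL(2,\FF_3)\times\{\pm1\}$ where the paper exhibits explicit matrices, and for $T_1$ you defer to a MAGMA enumeration where the paper lists the six good classes and the connecting matrices $C_{ij}$ by hand.
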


\begin{proof}
    Since $H^0(\ZZ_3^2,T_0)\simeq \ZZ_3^3$ and $H^0(\ZZ_3^2,T_1)\simeq\ZZ_3^2$, a quotient of $T_0$ cannot be diffeomorphic to a quotient of $T_1$ by Proposition~\ref{prop:classes}.\\
    Next, we prove that for both tori, the normalizer $\sN_\CC(\Lambda_{K_i})$ acts transitively on the set of good cocycles.
    Since all matrices $C\in N_{\Aut(E^3)}(\rho(\ZZ_3^2))$ are in block-form $C=\diag(C',c)$ with $C'\in\GL(2,\ZZ[\ze_3])$ and $c\in\langle -\ze_3\rangle$ by Proposition~\ref{prop:norm}, they commute with $\rho$. Hence, it suffices to show that for any two good cocylces $\tau$ and $\tau'$ in standard form, there exists a matrix $C\in\sN_\CC(\Lambda_{K_i})$ such that $C\cdot\tau=\tau'$.  Evaluated in $k$, this equation automatically holds since $\tau(k)=\tau'(k)=0$. So it suffices to check that $C\cdot \tau(h)=\tau'(h)$.\\
    We start with $T_0=E^3$. Let $\tau(h)=(a_1,a_2,a_3)$. Then by the Lemmata~\ref{le:welldefinedZ3^2 rho3} and \ref{le:good Z3^2 rho}, the cocylce $\tau$ is good if and only if $a_i\in\Fix_{\ze_3}(E)=\{0,\pm t\}$, $a_3\neq 0$ and $(a_1, a_2)\neq (0,0)$. 
    Let $\tau(h):=(t,0,t)$ and $\tau'(h)=(a'_1,a'_2,a'_3)$ be an arbitrary cocycle. Choose $c\in\{\pm 1\}$ such that $c\cdot t=a'_3$. Then a suitable matrix 
    \[
		C'\in \left\{\begin{pmatrix}
			\pm 1 & 0 \\ \pm 1 & 1
		\end{pmatrix},\:  \begin{pmatrix}
		0 &  1 \\  \pm 1 & 0
	\end{pmatrix}, \:  \begin{pmatrix}
	    \pm 1 & 0 \\ 0 & 1
	\end{pmatrix}\right\}
	\]
    yields $C:=\diag(C',c)\in \sN_\CC(\Lambda_{K_i})$ with $C\cdot(t,0,t)=\tau'(h)$.\\	
	Finally, we consider the quotients of $T_1=E^3/K_1$. There are six good cohomology classes in $H^1(\ZZ_3^2,T_1)$ represented by
	\begin{center}
		\bgroup\def\arraystretch{1.3}\begin{tabular}{|c|cccccc|}
			\hline
			$i$ & 1 & 2 & 3 & 4 & 5 & 6  \\
			\hline
			$\tau_i(h)$ & $\frac{1}{3}\cdot\footnotesize{\begin{pmatrix} 1 \\1 \\ 1 \end{pmatrix}}$ & $\frac{1}{3}\cdot \footnotesize{\begin{pmatrix} \ze_3 \\ 1 \\ 1 \end{pmatrix}}$ & $\frac{1}{3}\cdot \footnotesize{\begin{pmatrix} 1 \\ \ze_3 \\ 1 \end{pmatrix}}$  &  $\frac{2}{3}\cdot \footnotesize{\begin{pmatrix} 1 \\1 \\ 1 \end{pmatrix}}$ & $\frac{2}{3}\cdot \footnotesize{\begin{pmatrix} \ze_3 \\ 1 \\ 1 \end{pmatrix}}$ & $\frac{2}{3}\cdot \footnotesize{\begin{pmatrix} 1 \\ \ze_3 \\ 1 \end{pmatrix}}$ \\
			\hline
		\end{tabular}\egroup
	\end{center}
    We finally give matrices $C_{ij}\in \sN_\CC(\Lambda_{K_1})$ such that $0 = C_{ij}\cdot \tau_i(h)-\tau_j(h)$ in $T_1$:

    \[
        C_{ij}:=\begin{cases}
                \diag(\ze_3,1,1), & \makebox{if}\: (i,j)\in\{(1,2),(4,5)\}, \\
                \diag(1,\ze_3,1), & \makebox{if}\: (i,j)\in\{(1,3),(4,6)\},\\
                -\id, & \makebox{if}\: (i,j)=(1,4)
            \end{cases}.
    \]
    Hence all classes belong to the same orbit.
\end{proof}

%********************************************************************

\subsection{The Case $G=\ZZ_9\rtimes\ZZ_3$}

\begin{Proposition}\label{prop:ClassificationZ9:Z3}
    There is one and only one biholomorphism class of rigid quotients of 3-dimensional tori by rigid actions of $$\ZZ_9\rtimes\ZZ_3=\langle g,h\mid h^3=g^9=1,\: hgh^{-1}=g^4\rangle$$ with isolated fixed points, which can be realized as a quotient of $T=E^3$ and corresponds to $Y_{11}$ of Theorem~\ref{theo:main}.
\end{Proposition}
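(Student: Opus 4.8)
The plan is to run the same \emph{classification machinery} used above for $\ZZ_3^3$ and $\ZZ_3^2$, the point being that here the torus and the translation part are so rigidly constrained that the argument is short.

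\emph{Step 1 (the analytic representation).} The group $G=\ZZ_9\rtimes\ZZ_3$ is non-abelian of order $27$ with $[G,G]=Z(G)=\langle g^3\rangle\cong\ZZ_3$, so $G$ has nine characters of degree $1$ (factoring through $G^{\mathrm{ab}}\cong\ZZ_3^2$), no irreducible of degree $2$, and exactly two irreducible characters of degree $3$; the latter two are complex conjugate and are interchanged by the automorphism $g\mapsto g^{-1}$, $h\mapsto h$. A faithful $3$-dimensional $\rho$ cannot be a sum of degree-$1$ characters (these kill $Z(G)=[G,G]$) and there is no degree-$2$ irreducible, so $\rho$ is one of the two irreducibles of degree $3$; up to an automorphism of $G$ it is the representation written for $Y_{11}$ in Table~\ref{table0}. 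Since $\rho$ is irreducible and $\rho\not\simeq\overline\rho$, the action is rigid by Proposition~\ref{prop:Rigid}, and $\det\rho(g)=\ze_9^{1+4+7}=\ze_3\neq 1$ gives $p_g(X)=0$; note in particular $\rho(g^3)=\ze_3\cdot\id_{\CC^3}$.

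\emph{Step 2 (the torus and its kernel).} Because $\rho(g^3)=\ze_3\cdot\id$, multiplication by $\ze_3$ is an automorphism of $T$, and arguing as in Remarks~\ref{rem:StructureTforZ3^3} and \ref{rem:StructureTforZ3^2} we realize $T$ equivariantly as $E^3/K$. Explicitly, $h_1:=ghg^{-1}$, $h_2:=h$, $h_3:=g^{-1}hg$ have diagonal analytic representations $\diag(1,\ze_3^2,\ze_3)$, $\diag(\ze_3,1,\ze_3^2)$, $\diag(\ze_3^2,\ze_3,1)$, whose eigenvalue-$1$ eigenspaces are the three coordinate lines; the elliptic subtori $E_i:=\ker(\rho(h_i)-\id_T)^0\simeq E$ give an isogeny $E_1\times E_2\times E_3\to T$, so $T\simeq E^3/K$ with $K$ finite, $\rho(G)$-stable, and containing no nonzero $\lambda e_i$. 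As in \cite{GK}*{Proposition~4.7}, $K\subseteq\Fix_{\ze_3}(E)^3\cong\ZZ_3^3$, on which $\rho(G)$ acts by cyclic permutation of the coordinates ($g$ inducing a $3$-cycle while $g^3$ and $h$ act trivially). Hence the $\rho(G)$-submodules of $\ZZ_3^3$ are $0$, $\langle(t,t,t)\rangle$, $\{x_1+x_2+x_3=0\}=\langle(t,t,t),(t,-t,0)\rangle$ and the whole space; the last contains $(1,0,0)$ and is excluded, so $K\in\{0,\ \langle(t,t,t)\rangle,\ \langle(t,t,t),(t,-t,0)\rangle\}$, and these lie in distinct $\sN$-orbits since they have distinct $\FF_3$-dimensions and $\sN$ preserves the submodule lattice.

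\emph{Step 3 (normalizer, cocycle, and count).} One computes the finite group $\sN:=N_{\Aut(E^3)}(\rho(G))=N_{\GL(3,\ZZ[\ze_3])}(\rho(G))$ and, for each candidate $K$, the subgroup $\sN_\CC(\Lam_K)=\{C\in\sN\mid CK=K\}$, exactly as in Propositions~\ref{prop:normalizer} and \ref{prop:norm}. To normalize the translation part, observe that $\rho(g)$ has eigenvalues $\ze_9,\ze_9^4,\ze_9^7$, none equal to $1$, so $\rho(g)-\id$ is an isogeny of $T$ and, after moving the origin, we may assume the cocycle is in \emph{standard form} $\tau(g)=0$, the residual freedom being translation by the three points of $\Fix(g)$. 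The relation $hgh^{-1}=g^4$ then reads $(\id-\rho(g)^4)\tau(h)=0$, and since $\rho(g)^4$ and $\rho(g)$ generate the same group this forces $\tau(h)\in\Fix(g)$, a group of order $3$; the case $\tau(h)=0$ is impossible, for then $\langle g^3,h\rangle\cong\ZZ_3^2$ would fix a point, contradicting Theorem~\ref{theo:Singularities}. There remain finitely many pairs $(K,\tau(h))$, and as in Remarks~\ref{rem:ActionsZ3^3} and \ref{rem:Cd} a MAGMA computation sorts the \emph{good} cohomology classes (those whose action has isolated fixed points) into $\sN_\CC(\Lam_K)$-orbits: only $K=0$ admits a good class, the orbit is unique, and it is represented by $Y_{11}$; rigidity of the corresponding threefold is Step~1, and all remaining assertions follow from Proposition~\ref{prop:classes}.

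\emph{Where the difficulty sits.} The group theory of Step~1 and the final MAGMA enumeration are routine; the part to get right is Step~2 together with the normalization $\tau(g)=0$, $\tau(h)\in\Fix(g)$ in Step~3 — one must check that the structure result $T\simeq E^3/K$, the short list of admissible $K$, and this normal form genuinely reduce the biholomorphic classification to a finite problem controlled by $\sN_\CC(\Lam_K)$, so that no class is lost.
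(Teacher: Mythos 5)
Your proposal is correct and follows essentially the same route as the paper: it identifies the unique faithful $3$-dimensional representation, splits $T$ as $E^3/K$ via the three conjugates of $h$ (your $h_1,h_2,h_3$ coincide with the paper's $hg^6,h,hg^3$), narrows $K$ to the same three candidates, normalizes $\tau(g)=0$ with $\tau(h)\in\Fix(\rho(g))$, and finishes by the same MAGMA enumeration. The only difference is cosmetic: you derive the constraint $\tau(h)\in\Fix(\rho(g))$ explicitly from the relation $hgh^{-1}=g^4$ and rule out $\tau(h)=0$ by cyclicity of stabilizers, steps the paper subsumes in its "well-definedness" remark and its fixed-point computation.
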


\begin{proof}
    Let $T$ be a $3$-dimensional torus admitting an action $\Phi$ of $\ZZ_9\rtimes \ZZ_3$ as in the proposition. Then up to automorphisms of $\ZZ_9\rtimes \ZZ_3$ and equivalence of representations, the anyalytic representation $\rho\colon \ZZ_9\rtimes \ZZ_3\hookrightarrow\GL(3,\CC)$ is given by
    \[
        \rho(g)=\begin{pmatrix}
            0 & 1 & 0\\ 0& 0 & 1\\\ze_3 & 0 & 0
        \end{pmatrix}\qquad \makebox{and}\qquad \rho(h)=\begin{pmatrix}
            \ze_3 & & \\ & 1 &\\ &&\ze_3^2
        \end{pmatrix}.
    \]
    The subtori
    \[
        E_1:=\ker(\rho(hg^6)-\id)^0,\quad E_2:=\ker(\rho(h)-\id)^0,\quad E_3:=\ker(\rho(hg^3)-\id)^0
    \]
    of $T$ are all isomorphic to Fermat's elliptic curve $E$ and the addition map $E_1\times E_2\times E_3\to T$ is an isogeny inducing an equivariant isomorphism $T\simeq E^3/K$.\\
    Without loss of generality, we will assume that the translation part $\tau$ of $\Phi$ fulfills:
    \[
        \tau(h)=(a_1,a_2,a_3)\qquad \makebox{and}\qquad \tau(g)=0.
    \]
    Then we see as before that $\Phi$ is well-defined if and only if $a_1,a_2$ and $a_3$ are $3$-torsion points in $E$ and the element $\tau(h)$ is fixed by $\rho(g)$. Furthermore, the action has isolated singularities if and only if $h, hg^3$ and $hg^6$ act freely, which is equivalent to say that $a_i$ is never the $i$-th coordinate of an element in $K$. Again similar as in the other cases, we deduce that $K$ is a subgroup of $\Fix_{\ze_3}(E)^3$ containing no elements of the form $\lambda e_j$ with $\lambda\neq 0$. Furthermore, $K$ has to be fixed under multiplication by $\rho(g)$. Thus, $K$ is one of the following:
    \[
        K_0:=\{0\},\qquad K_1:=\langle (t,t,t)\rangle, \qquad K_2:=\langle (t,t,t),(t,-t,0)\rangle,
    \]
    where $t:=1/3+2\ze_3/3$. A MAGMA computation shows that there are no actions with isolated fixed points if $K=K_1$ or $K=K_2$ and that the only two possible actions for $K=K_0$, thus $T=E^3$, are given by
    \[
        \tau_1(h)=t\cdot (1,1,1)\qquad \makebox{and}\qquad \tau_2(h)=-t\cdot (1,1,1).
    \]
    Since multiplication by $-1$ induces a biholomorphism between the corresponding quotients, the claim follows.
\end{proof}

%***************************************************************************+

\section{Fundamental Groups and Covering Spaces of  Torus-Quotients}\label{sec:FundGroups}

Let $G$ be a finite group acting holomorphically on a complex torus $T$ of arbitrary dimension by
\[
    \Phi(g)(z)=\rho(g)\cdot z + \tau(g).
\]

The goal of this section is to study the structure of the fundamental group and the covering space of the torus-quotient $X=T/G=\CC^n/\Gamma$, where $\Gamma$ is the orbifold fundamental group
\[
    \Gamma:=\pi_1^{orb}(T,G)=\{\gamma\colon \CC^n\to\CC^n\mid \exists \:g\in G \colon g\circ p =   p \circ \gamma\}
\]
sitting inside the exact sequence
\[
    0\longrightarrow \Lambda\longrightarrow \Gamma \overset{\pi}{\longrightarrow} G\longrightarrow 1
\]
as explained in Section~\ref{sec:preliminaries}. Applying the results to the quotients in our classification will allow us to compute their fundamental groups explicitly.

\begin{Notation}
    We denote by $G_{\fix}$ and $\Gamma_{\fix}$ the subgroups of $G$ and $\Gamma$ generated by the elements acting with fixed points on $T$ and $\CC^n$, respectively. These subgroups are normal.
\end{Notation}

By a theorem of Armstrong (cf.~\cite{Armstrong}), it holds:
    \[
        \bigslant{\Gamma}{\Gamma_{\fix}}\simeq \pi_1\left(\bigslant{T}{G}\right).
    \]

Since the quotient map $\pi\colon\Gamma\to G$ restricts to a surjection $\pi\colon \Gamma_{\fix} \to G_{\fix}$ with kernel $\Lambda\cap \Gamma_{\fix}$, the following sequence is exact:
\begin{align}\label{eq:fundamentalgroup}
    0 \longrightarrow \bigslant{\Lambda}{(\Lambda\cap\Gamma_{\fix})} \longrightarrow\bigslant{\Gamma}{\Gamma_{\fix}}\longrightarrow\bigslant{G}{G_{\fix}}\longrightarrow 1.
\end{align}

\begin{Corollary}\label{cor:universalCover}
    The universal cover of $T/G$ is 
    \[
    \CC^n/\Gamma_{\fix} \simeq \bigslant{(\CC^n/\Lambda')}{(\Gamma_{\fix}/\Lambda')}\simeq \bigslant{T'}{G_{\fix}},
    \]
    where $\Lambda':=\Lambda\cap\Gamma_{\fix}$ and $T'=\CC^n/G_{\fix}$ is a possibly non-compact torus.\\
    In particular, the universal cover of $T/G$ is compact if and only if $\Gamma_{\fix}$ is crystallographic, or equivalently $T'$ is compact.
\end{Corollary}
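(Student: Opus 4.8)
## Proof Proposal for Corollary~\ref{cor:universalCover}

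The plan is to unwind the definitions and use the theorem of Armstrong quoted just above, together with the fact that $\Gamma$ is a group of deck transformations acting properly discontinuously on $\CC^n$. First I would argue that $\CC^n/\Gamma_{\fix}$ is simply connected: by Armstrong's theorem applied to the $\Gamma_{\fix}$-action on $\CC^n$, the fundamental group of $\CC^n/\Gamma_{\fix}$ is the quotient of $\Gamma_{\fix}$ by the subgroup generated by its elements with fixed points. But by the very definition of $\Gamma_{\fix}$, every generator of $\Gamma_{\fix}$ acts with fixed points, so this subgroup is all of $\Gamma_{\fix}$, and hence $\pi_1(\CC^n/\Gamma_{\fix}) = 1$. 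Since $\Gamma/\Gamma_{\fix}$ acts on $\CC^n/\Gamma_{\fix}$ with quotient $\CC^n/\Gamma = T/G$, this exhibits $\CC^n/\Gamma_{\fix}\to T/G$ as the universal cover.

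Next I would establish the chain of isomorphisms in the display. Setting $\Lambda' := \Lambda\cap\Gamma_{\fix}$, this is a sublattice of $\Lambda$ which is normal in $\Gamma_{\fix}$ (it is the kernel of $\pi|_{\Gamma_{\fix}}\colon\Gamma_{\fix}\to G_{\fix}$, as noted in the excerpt). Since $\Lambda'$ is a discrete subgroup of the translations $\RR^{2n}$, the quotient $\CC^n/\Lambda'$ is a complex Lie group, namely a (possibly non-compact) torus $T'$; concretely $T' \cong (\CC^n/\Lambda) \big/ (\Lambda/\Lambda')$ is a quotient of $T$ by a finite subgroup when $\Lambda'$ has finite index, but in general $\Lambda'$ may have infinite index, in which case $T'$ is non-compact. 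The first isomorphism $\CC^n/\Gamma_{\fix}\simeq(\CC^n/\Lambda')/(\Gamma_{\fix}/\Lambda')$ is the standard identification for a normal subgroup, and the second is just the definition $T' = \CC^n/\Lambda'$ together with $\Gamma_{\fix}/\Lambda' \cong G_{\fix}$ from the exact sequence $0\to\Lambda'\to\Gamma_{\fix}\to G_{\fix}\to 1$. I should fix the evident typo in the statement ($T' = \CC^n/\Lambda'$, not $\CC^n/G_{\fix}$) when writing this out.

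Finally, for the compactness criterion: $\CC^n/\Gamma_{\fix}$ is compact if and only if $\Gamma_{\fix}$ is cocompact in $\mathbb{E}(2n)$, and since $\Gamma_{\fix}$ is automatically discrete (being a subgroup of the discrete group $\Gamma$), this is exactly the condition that $\Gamma_{\fix}$ be crystallographic. By Bieberbach's first theorem, this is equivalent to $\Lambda' = \Gamma_{\fix}\cap\RR^{2n}$ being a full-rank lattice, i.e. of finite index in $\Lambda$, which is in turn equivalent to $T' = \CC^n/\Lambda'$ being compact. I expect the only real subtlety — hardly an obstacle — is being careful that $\Lambda'$ need not have finite index in $\Lambda$ in general, so that $T'$ genuinely can be a non-compact torus; everything else is a formal consequence of Armstrong's theorem and Bieberbach's structure theorem, both available in the excerpt.
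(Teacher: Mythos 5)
Your proposal is correct and follows essentially the route the paper intends: the corollary is stated without proof as an immediate consequence of Armstrong's theorem and the exact sequence displayed just before it, and your argument --- applying Armstrong to the $\Gamma_{\fix}$-action on $\CC^n$ to get simple connectivity, passing to the quotient by $\Gamma/\Gamma_{\fix}$, and reading off the compactness criterion from Bieberbach's first theorem --- is exactly the standard way to fill in those details. You are also right that $T'=\CC^n/G_{\fix}$ in the statement is a typo for $T'=\CC^n/\Lambda'$. Two small points: (i) you should record the one-line check that $\Gamma/\Gamma_{\fix}$ acts \emph{freely} on $\CC^n/\Gamma_{\fix}$ (if $\gamma\Gamma_{\fix}$ fixes $[x]$ then $\delta^{-1}\gamma$ fixes $x$ for some $\delta\in\Gamma_{\fix}$, hence $\gamma\in\Gamma_{\fix}$), since this is what makes the quotient map a covering; (ii) in your aside, since $\Lambda'\subseteq\Lambda$ the torus $T'=\CC^n/\Lambda'$ is a \emph{cover} of $T$ (with $T\simeq T'/(\Lambda/\Lambda')$), not a quotient of $T$ --- this does not affect the argument.
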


\begin{Corollary}\label{cor:fundamentalgroup}
    If there is an element $g\in G_{\fix}$ such that all lifts of $g$ to $\CC^n$ belong to $\Gamma_{\fix}$, then $\Lambda\subset\Gamma_{\fix}$ and in particular,
    \[
        \pi_1\left(\bigslant{T}{G}\right)\simeq \bigslant{G}{G_{\fix}}.
    \] 
\end{Corollary}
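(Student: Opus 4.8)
The key point is purely group-theoretic: the collection of all lifts of a fixed element $g\in G$ to $\CC^n$ is exactly the fiber $\pi^{-1}(g)$ of the quotient map $\pi\colon\Gamma\to G$, and this fiber is a single coset of $\Lambda=\ker(\pi)$ in $\Gamma$. The hypothesis says this entire coset is contained in the subgroup $\Gamma_{\fix}$, and that is rigid enough to force $\Lambda\subseteq\Gamma_{\fix}$.

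Concretely, I would first fix one lift $\gamma_0\in\pi^{-1}(g)$; by assumption $\gamma_0\in\Gamma_{\fix}$. Since $\Lambda$ is the kernel of $\pi$, we have $\pi^{-1}(g)=\gamma_0\Lambda$, so the hypothesis that every lift of $g$ lies in $\Gamma_{\fix}$ reads $\gamma_0\lambda\in\Gamma_{\fix}$ for all $\lambda\in\Lambda$. Because $\Gamma_{\fix}$ is a subgroup and $\gamma_0\in\Gamma_{\fix}$, this yields $\lambda=\gamma_0^{-1}(\gamma_0\lambda)\in\Gamma_{\fix}$ for every $\lambda\in\Lambda$, i.e.\ $\Lambda\subseteq\Gamma_{\fix}$.

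Once $\Lambda\subseteq\Gamma_{\fix}$ is established, the first term $\Lambda/(\Lambda\cap\Gamma_{\fix})$ of the exact sequence \eqref{eq:fundamentalgroup} is trivial, so that sequence collapses to an isomorphism $\Gamma/\Gamma_{\fix}\simeq G/G_{\fix}$. Combining this with Armstrong's theorem, which gives $\pi_1(T/G)\simeq\Gamma/\Gamma_{\fix}$, produces the claimed identification $\pi_1(T/G)\simeq G/G_{\fix}$.

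\textbf{Expected main obstacle.} There is essentially none: the statement is a formal consequence of the coset structure of $\pi^{-1}(g)$. The only point requiring a moment of care is the identification "all lifts of $g$" $=\pi^{-1}(g)=\gamma_0\Lambda$, which holds precisely because $\Lambda=\Gamma\cap\RR^{2n}$ is the kernel of $\pi$ (this is where the Bieberbach structure of $\Gamma$, already recorded in the excerpt, is used).
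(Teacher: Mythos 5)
Your proof is correct and follows exactly the argument the paper intends (the corollary is left without an explicit proof there): the set of lifts of $g$ is the coset $\gamma_0\Lambda=\pi^{-1}(g)$, so the hypothesis forces $\Lambda\subseteq\Gamma_{\fix}$ since $\Gamma_{\fix}$ is a subgroup containing $\gamma_0$, and then the exact sequence \eqref{eq:fundamentalgroup} together with Armstrong's theorem gives $\pi_1(T/G)\simeq\Gamma/\Gamma_{\fix}\simeq G/G_{\fix}$. No gaps.
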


\begin{rem}\label{rem:fundamentalgroup}
    If $G$ contains an element $g$ such that 1 is not an eigenvalue of $\rho(g)$, then $g\in G_{\fix}$ and all lifts of $g$ have a fixed point. Hence, the condition $\Lambda\subset \Gamma_{\fix}$ of Corollary~\ref{cor:fundamentalgroup} is satisfied.\\
    In all our examples, there is such an element, which allows us to compute the fundamental group and the universal cover immediately.
\end{rem}

\begin{rem}
%In the situation of Corollary \ref{cor:fundamentalgroup}, we can directly determine the isomorphism type of the fundamental group $\pi_1\left(T/G\right)$.
In the general case, where $ \Lambda \neq (\Lambda\cap\Gamma_{\fix})$, we still have a description of the fundamental group in terms of the $G$-action as an extension:     \[
        \pi_1\left(\bigslant{T}{G}\right)\simeq \bigslant{\Lambda}{(\Lambda\cap\Gamma_{\fix})}\times_{\overline{\beta}}\: \bigslant{G}{G_{\fix}},
    \]
    where the $2$-cocyle $\overline{\beta}$ can be derived from the translation part $\tau$ of the action on the torus.  
For completeness, we sketch the construction: 

    %In obiger Situation kann die Fundamentalgruppe leicht aus der Wirkung von $G$ auf $T$ abgelesen werden. Ist $\Lambda\cap \Gamma_{\fix}\neq \Lambda$, so gilt obiger Iso nicht mehr. Dennoch kann die Fundamentalgruppe als Erweiterung von bla mit bla über einen Kozykel beschrieben wirden, der sich aus dem Translationsteil $\tau$ berechnen lässt. Das abschließend der Vollständigkeit halber noch erklärt.\\
    
    The short exact sequence  $0\to \Lambda \to \mathbb C^n \to T \to 0$ of $G$-modules yields an isomorphism 
    \[
    \delta\colon H^1(G,T)\longrightarrow H^2(G,\Lambda).
    \]
    The image of the class of  $\tau$ under this isomorphism yields the crystallographic group $\Gamma$ as an extension of $G$ by $\Lambda$.\\
    We describe how to compute a representative $\beta \in Z^2(G,\Lambda)$ of the image of $\tau$ which descends to a $2$-cocyle $\overline{\beta}$ of $G/G_{\fix}$ with values in $\Lambda/(\Lambda\cap\Gamma_{\fix})$ giving the extension
    \[
        \pi_1\left(\bigslant{T}{G}\right)\simeq \bigslant{\Lambda}{(\Lambda\cap\Gamma_{\fix})}\times_{\overline{\beta}}\: \bigslant{G}{G_{\fix}}.
    \]
    \begin{enumerate}
        \item Let $H:=G/G_{\fix}$ and $s\colon H \to G$ be a section. 
        Then every $g\in G$ can be written uniquely as $g=g_{\fix}\cdot s(h)$.
        \item For $g_{\fix}$, choose a lift $\hat{\tau}(g_{\fix})\in\CC^n$ of $\tau(g_{\fix})\in T$ such that the corresponding affine transformation
        \[
            \gamma_{\fix}(z):=\rho(g_{\fix})z+\hat{\tau}(g_{\fix})
        \]
        belongs to $\Gamma_{\fix}$. 
        For $s(h)$, choose an arbitrary lift $\hat{\tau}(s(h))\in\CC^n$. Then
        \[
            \hat{\tau}\colon G\longrightarrow \CC^n,\quad g_{\fix}\cdot s(h)\mapsto \rho(g_{\fix})\hat{\tau}(s(h))+\hat{\tau}(g_{\fix})
        \]
        is a lift of $\tau\colon G\to T$.
        \item The image of $[\tau]$ under $\delta$ is represented by
        \[
            \beta(g_1,g_2):=\rho(g_1)\hat{\tau}(g_2)-\hat{\tau}(g_1g_2)+\hat{\tau}(g_1)\in\Lambda.
        \]
        \item The particular choice of $\hat{\tau}$ guarantees that the cocycle $\beta$ descends to a cocycle $\overline{\beta}$ describing the fundamental group (cf.~\cite{charlap}*{Section~V.3}).
    \end{enumerate} 
\end{rem}

% **********************************************************************

\section{Crepant Terminalizations and Resolutions of Singularities}\label{sec:resolution}

In this section, we construct crepant terminalizations and resolutions of our quotients and compare their deformation theory with the one of the singular quotients. More precisely, we show:  

\begin{prop}\label{prop:Resolutions}
	All quotients $Y$ in Theorem \ref{theo:main} admit a crepant terminalization $Y_{\ter}$ and a resolution $\hat{Y}$ fitting in the diagram
    \[\begin{tikzcd}
	 	Y_{\ter} \arrow{r}{\psi} & Y\\
	 	\hat{Y} \arrow{u}{\eta}\arrow{ur} &
	 \end{tikzcd}\]
    such that $Y_{\ter}$ and $\hat{Y}$ are infinitesimally rigid.
\end{prop}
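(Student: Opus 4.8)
The plan is to reduce everything to explicit local toric computations at the singular points of $Y$. \textbf{Construction of the diagram.} By Theorem~\ref{theo:Singularities} each singular point $p$ of $Y$ is an isolated cyclic quotient singularity, so analytically $Y$ is near $p$ the affine toric variety $U_{\sigma}$ attached to $\sigma=\cone(e_1,e_2,e_3)\subset N_\RR$ with $N=\ZZ^3+\ZZ v_p$, where $v_p$ is the weight vector of $\Stab(p)$; the occurring types are $\tfrac{1}{d}(1,1,d-1)$ ($d=2,3,4,6$), $\tfrac{1}{3}(1,1,1)$, $\tfrac{1}{7}(1,2,4)$, $\tfrac{1}{9}(1,4,7)$ and $\tfrac{1}{14}(1,9,11)$. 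First I would choose, for each such cone, a subdivision $\Sigma_{\ter}$ of $\sigma$ whose rays pass only through lattice points on which the anticanonical functional of $U_\sigma$ takes the value $1$ and which is maximal among such subdivisions with only terminal maximal cones: this is a crepant partial resolution of $U_\sigma$ with at worst terminal cyclic quotient singularities. Explicitly, the Gorenstein points $\tfrac{1}{3}(1,1,1)$ and $\tfrac{1}{7}(1,2,4)$ are replaced by a smooth toric crepant resolution, $\tfrac{1}{9}(1,4,7)$ by three copies of $\tfrac{1}{3}(1,1,2)$, $\tfrac{1}{14}(1,9,11)$ by seven nodes $\tfrac{1}{2}(1,1,1)$, and the terminal $\tfrac{1}{d}(1,1,d-1)$ are left untouched. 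Refining $\Sigma_{\ter}$ further to a smooth fan $\widehat{\Sigma}$ gives a resolution. Performing these modifications at each singular point and gluing yields the projective birational morphisms $\psi\colon Y_{\ter}\to Y$ and $\eta\colon\widehat Y\to Y_{\ter}$ forming the stated diagram. Since the rays of $\Sigma_{\ter}$ lie on the hyperplane where the anticanonical functional equals $1$, every $\psi$-exceptional divisor has discrepancy $0$, so $K_{Y_{\ter}}=\psi^*K_Y$; and since $\omega_T$ is trivial with $G$ acting on it through a character, $\omega_Y^{[\,|G|\,]}\cong\sO_Y$, so $K_Y$ — hence also $K_{Y_{\ter}}=\psi^*K_Y$ — is torsion, in particular numerically trivial. (By contrast $\eta$ contracts divisors of positive discrepancy, so $K_{\widehat Y}=\eta^*K_{Y_{\ter}}+(\text{effective }\eta\text{-exceptional})$.)

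\textbf{Reduction of infinitesimal rigidity to a local statement.} Both $Y_{\ter}$ and $\widehat Y$ have dimension $3$ and only isolated quotient singularities (or are smooth), so by Schlessinger's theorem \cite{schlessinger} (cf.\ the Remark preceding Proposition~\ref{prop:Rigid}) infinitesimal rigidity of such a space $W$ is equivalent to $H^1(W,\Theta_W)=0$. Let $f\colon W\to Y$ denote $\psi$ or $\psi\circ\eta$; it is proper, birational, and an isomorphism over the finite set $Y\setminus\Sing Y$, so $R^qf_*\Theta_W$ is a skyscraper sheaf supported on $\Sing Y$ for $q\ge 1$ and the low-degree exact sequence of the Leray spectral sequence reads
\[
0\longrightarrow H^1(Y,f_*\Theta_W)\longrightarrow H^1(W,\Theta_W)\longrightarrow\bigoplus_{p\in\Sing Y}(R^1f_*\Theta_W)_p .
\]
Here all occurring weight vectors have coordinates prime to the order, so $\Stab(p)$ fixes no non-zero tangent vector and every $G$-invariant vector field near $p$ vanishes at $p$; one checks on the toric charts that the finitely many monomial vector fields generating $\Theta_Y$ near $p$ all lift to the toric (partial) resolution — the torus-invariant ones trivially, the remaining ones by a direct computation on the cones — whence $f_*\Theta_W=\Theta_Y$. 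Therefore $H^1(Y,f_*\Theta_W)=H^1(Y,\Theta_Y)=0$ because $Y$ appears in Theorem~\ref{theo:main} and is thus rigid (Proposition~\ref{prop:Rigid}), and it remains only to prove that each local term $(R^1f_*\Theta_W)_p$ vanishes.

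\textbf{The local vanishing.} The group $(R^1f_*\Theta_W)_p$ depends only on the chosen toric resolution of the local model $U_{\sigma_p}$, and by the theorem on formal functions it is the inverse limit over the infinitesimal thickenings of the exceptional fibre $E_p=f^{-1}(p)$, a reduced compact configuration of toric surfaces ($\PP^2$'s, weighted projective planes $\PP(1,1,m)$ and Hirzebruch surfaces) glued along torus-invariant curves, with normal bundles that are negative along the contracted directions. For each of the finitely many occurring types I would compute $H^1(E_p,\Theta_W|_{E_p})$ from the normal/conormal bundle sequences of $E_p$ and its irreducible components together with the Mayer--Vietoris sequences of the configuration, using vanishings such as $H^1(\PP^2,\sO(-k))=0$ for $k\ge 1$, $H^1(\PP^2,\Theta_{\PP^2})=0$ and their analogues for the finitely many surface--normal-bundle pairs that occur; the graded pieces of the thickening filtration involve only further negative twists and vanish as well, so $(R^1f_*\Theta_W)_p=0$. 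Combined with the previous paragraph this gives $H^1(Y_{\ter},\Theta_{Y_{\ter}})=H^1(\widehat Y,\Theta_{\widehat Y})=0$, i.e.\ both $Y_{\ter}$ and $\widehat Y$ are infinitesimally rigid. I expect this last step to be the main obstacle — specifically the higher-index singularities $\tfrac{1}{7}(1,2,4)$, $\tfrac{1}{9}(1,4,7)$ and $\tfrac{1}{14}(1,9,11)$, where the exceptional configuration is largest; there the computation runs parallel to the rigidity of the crepant Calabi--Yau resolutions in the $p_g=1$ case established in \cite{GK}, and is best cross-checked with the computer algebra system used elsewhere in the paper.
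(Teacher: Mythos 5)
Your construction of $Y_{\ter}$ and $\hat Y$ coincides with the paper's: the same subdivisions along $v=\tfrac13(1,1,1)$ resp.\ $v_1,v_2,v_3$, the same replacement of $\tfrac19(1,4,7)$ by three points of type $\tfrac13(1,1,2)$ and of $\tfrac1{14}(1,9,11)$ by seven nodes, and the same reduction via Leray to the two conditions $f_*\Theta_W\simeq\Theta_Y$ and $R^1f_*\Theta_W=0$. The problem is that you do not actually establish either condition, and the second one is precisely where the content of the proposition lies. For $f_*\Theta_W\simeq\Theta_Y$ you assert that the generating monomial vector fields "lift by a direct computation on the cones"; the paper proves this by comparing the polyhedra $P_{D_i}\cap N^\vee=P_{D_i'}\cap N^\vee$ of the boundary divisors (via \cite{BGKod1}*{Proposition~5.8}), using an explicit congruence argument. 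That step is genuinely needed and not automatic.

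The more serious gap is the local vanishing $(R^1f_*\Theta_W)_p=0$. Your plan — formal functions, restriction to the exceptional configuration $E_p$, normal-bundle and Mayer--Vietoris sequences — rests on the claim that "the graded pieces of the thickening filtration involve only further negative twists and vanish as well." This is not a valid general principle: negative twists on the toric surfaces appearing here need not have vanishing $H^1$ (already $H^1(\PP^1\times\PP^1,\sO(-2,0))\neq 0$), and indeed the paper explicitly notes that the exceptional divisor $E_{v_1}$ in the terminalization of $\tfrac1{14}(1,9,11)$ is \emph{not} nef, so no blanket negativity/vanishing argument applies. Moreover $E_p$ is a reducible, non-lci configuration, so the graded pieces of the thickening filtration are not simply symmetric powers of a conormal bundle. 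The paper avoids all of this by staying toric: the dual Euler sequence reduces $R^1\psi_*\Theta_{X_\Sigma}$ to $H^1$ of the divisors $\sO(D_i)$ and $\sO(E_{v_k})$; the first vanish by Demazure vanishing after checking that $2D_i$ is basepoint-free from its Cartier data, and the second by verifying connectedness of the combinatorial sets $V_{E,m}$ (\cite{CLS11}*{Theorem~9.1.3}) through an explicit inequality argument. Finally, for the resolution $\hat Y$ you cannot take an arbitrary smooth refinement $\widehat\Sigma$: the paper invokes the specific resolutions of $\tfrac1d(1,1,d-1)$, $d=2,3,4,6$, constructed in \cite{Kod1} for which the two pushforward conditions are known to hold; an arbitrary refinement need not satisfy them, so this choice also requires justification.
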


To prove the proposition, we proceed as follows: first, we construct a crepant terminalization $\psi$ with the properties
\begin{align}\label{eq:PropertiesRes}
		\psi_\ast(\Theta_{Y_{\ter}})\simeq\Theta_Y\qquad \mathrm{and}\qquad		R^1\psi_\ast(\Theta_{Y_{\ter}})=0.
\end{align}
Leray's spectral sequence then yields an isomorphism $H^1(Y_{\ter},\Theta_{Y_{\ter}})\simeq H^1(Y,\Theta_Y)$. Thus, the rigidity of $Y_{\ter}$ follows from the rigidity of $Y$. It will turn out that the terminalizations have only cyclic quotient singularities of type $\tfrac{1}{d}(1,1,d-1)$, where $d=2,3,4$ or $6$. For varieties with such singularities, a resolution $\hat{Y}$ with the properties~\ref{eq:PropertiesRes} exists (cf. \cite{Kod1}), which implies that the first cohomology of $\Theta_{\hat{Y}}$ is trivial as well.

%\begin{lemma}
%    Let $f\colon X_1\to X_2$ and $g\colon X_2\to X_3$ be morphisms both having the properties~\ref{eq:PropertiesRes}. Then $g\circ f\colon X_1\to X_3$ enjoys these properties, too.
%\end{lemma}

%\begin{proof}
%    It follows directly from the definition of the direct image that
%    \[
%        (g\circ f)_\ast(\Theta_{X_1})=g_\ast f_\ast(\Theta_{X_1})=\Theta_{X_3}.
%    \]
%    For the vanishing of the first higher direct image, we use Grothendieck's spectral sequence 
%    \[
%        E_2^{p,q}=R^p g_\ast R^q f_\ast(\Theta_{X_1})\implies R^{p+q}(g\circ f)_\ast(\Theta_{X_1}).
%    \]
%    Since $E_2^{1,0}=0=E_2^{0,1}$ by assumption, it holds $E_2^{1,0}=E_\infty^{1,0}$ and $E_2^{0,1}=E_\infty^{0,1}$ and hence,
%    \[
%        R^1(g\circ f)_\ast(\Theta_{X_1})\simeq E_\infty^{1,0}\oplus E_\infty^{0,1}=0. \qedhere
%    \]
%\end{proof}

Since the quotients $Y$ have only isolated singularities, the construction of a suitable terminalization is a local problem. By Theorem~\ref{theo:Singularities}, the non-terminal singularities of $Y$ are all cyclic and of the following types:
\[
    \tfrac{1}{3}(1,1,1),\quad \tfrac{1}{7}(1,2,4), \quad \tfrac{1}{9}(1,4,7),\quad \tfrac{1}{14}(1,9,11).
\]

Recall that cyclic quotient singularities are toric. %The basic references for toric geometry are the text books \cite{Fulton}, \cite{CLS11}.\\
In \cite{BGKod1}, the authors give a crepant toric resolution of the singularities of type $\tfrac{1}{3}(1,1,1)$ having the properties~\ref{eq:PropertiesRes}. With the same methods, one can show that the crepant resolution of $\frac{1}{7}(1,2,4)$ given in \cite{RoanYau} enjoys these conditions as well. It remains to prove the existence of a suitable crepant terminalization for the last two singularities. Note that these singularities do not admit a crepant resolution.\\
As affine toric varieties, they are represented as follows: let $\sigma:=\cone(e_1,e_2,e_3)$ and consider the lattices
	\begin{align*}
		N_1&:=\ZZ^3+\ZZ\cdot \tfrac{1}{9}(1,4,7),\\
		N_2&:=\ZZ^3+\ZZ\cdot \tfrac{1}{14}(1,9,11)
	\end{align*}
in $\RR^3$. The affine toric variety representing the cyclic quotient singularity of type $\tfrac{1}{9}(1,4,7)$ is then given by $U_1:=\Spec(\CC[N_1^\vee\cap\sigma^\vee])$, where
\[
    N_1^\vee=\{ x\in\RR^3\mid \langle x,n\rangle \in\ZZ \quad \makebox{for all}\quad n\in N_1\} \quad \makebox{and}\quad \sigma^\vee=\{x\in \RR^3\mid \langle x, v\rangle \geq 0 \quad\makebox{for all}\quad  v\in\sigma\},
\]

are the dual lattice and the dual cone. The variety corresponding to $ \tfrac{1}{14}(1,9,11)$ is similarly given by $U_2:=\Spec(\CC[N_2^\vee\cap\sigma^\vee])$.\\
Subdividing the cone $\sigma$ along the rays generated by 
\begin{itemize}
	\item $v:=\tfrac{1}{3}(1,1,1)\in N_1$ or
	\item $v_1:=\tfrac{1}{7}(1,2,4),\:v_2:=\tfrac{1}{7}(4,1,2),\: v_3:=\tfrac{1}{7}(2,4,1)\in N_2$, respectively,
\end{itemize}
yields the fans $\Sigma_1$ and $\Sigma_2$ visualized in Figure~\ref{Fans}. 

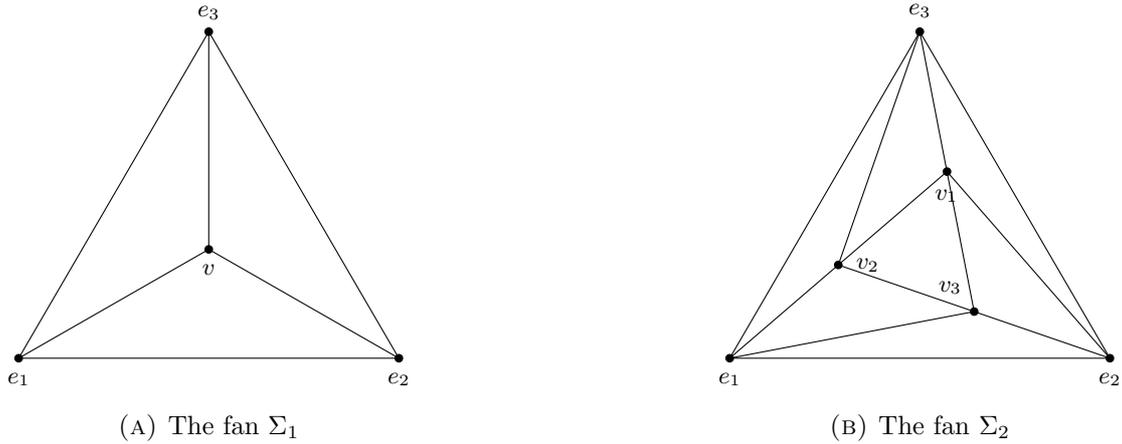
\begin{figure}[htb]
	\centering
	\begin{minipage}[t]{.45\linewidth}
		\centering
		\begin{tikzpicture}
			\def\sl{5} %Seitenlänge
			\def\w{60} %Winkel
			\def\dotsize{0.01*\sl}
			\def\name{\tau}
			\def\size{\footnotesize}
			
			\coordinate[label={[label distance=0.06 cm]-90:\size $e_1$}] (A) at (0,0);
			\coordinate[label={[label distance=0.06 cm]-90:\size $e_2$}] (B) at (\sl,0);
			\coordinate[label={[label distance=0.03 cm]90:\size $e_3$}] (C) at (\w:\sl);
			
			\coordinate (fc) at ($(A)!0.5!(B)$);
			\coordinate (fa) at ($(B)!0.5!(C)$);
			\coordinate (fb) at ($(C)!0.5!(A)$);
			
			\coordinate[label={[label distance=0.06 cm]-90:\size $v$}] (v) at (intersection of A--fa and B--fb);
			
			\draw (A) -- (B) -- (C) -- cycle;
			\draw (A) -- (v);
			\draw (B) -- (v);
			\draw (C) -- (v);
			\filldraw (A) circle (\dotsize);
			\filldraw (B) circle (\dotsize);
			\filldraw (C) circle (\dotsize);
			\filldraw (v) circle (\dotsize);
			
			%\node at ($(v)!0.6!(fc)$) {\size $\name_3$};
			%\node at ($(v)!0.6!(fa)$) {\size $\name_1$};
			%\node at ($(v)!0.6!(fb)$) {\size $\name_2$};
		\end{tikzpicture}
		\subcaption{The fan $\Sigma_1$}
	\end{minipage}%
	\hfill%
	\begin{minipage}[t]{.45\linewidth}
		\centering
		\begin{tikzpicture}
			\def\sl{5} %Seitenlänge
			\def\w{60} %Winkel
			\def\dotsize{0.01*\sl}
			\def\name{\tau}
			\def\size{\footnotesize}
			
			\coordinate[label={[label distance=0.06 cm]-90:\size $e_1$}] (A) at (0,0); %Eckpunkte
			\coordinate[label={[label distance=0.06 cm]-90:\size $e_2$}] (B) at (\sl,0);
			\coordinate[label={[label distance=0.06 cm]90:\size $e_3$}] (C) at (\w:\sl);
			
			\coordinate (fc) at ($(A)!0.6667!(B)$); %Hilfskoordinaten Seitenverhältnis
			\coordinate (fa) at ($(B)!0.6667!(C)$);
			\coordinate (fb) at ($(C)!0.6667!(A)$);
			
			\coordinate[label={[label distance=0.1 cm]-93:\size $v_1$}] (v1) at (intersection of A--fa and C--fc);
			\coordinate[label={[label distance=0.1 cm]105:\size $v_3$}] (v3) at (intersection of B--fb and C--fc); 
			\coordinate[label={[label distance=0.1 cm]0:\size $v_2$}] (v2) at (intersection of A--fa and B--fb);
			
			\coordinate (fd) at ($(A)!0.5!(v3)$); %Hilfskoordinaten Beschriftung Felder
			\coordinate (fe) at ($(B)!0.5!(v1)$);
			\coordinate (ff) at ($(C)!0.5!(v2)$);
			
			\coordinate (fg) at ($(v2)!0.5!(v3)$);
			\coordinate (fh) at ($(v1)!0.5!(v3)$);
			
			\draw (A) -- (B) -- (C) -- cycle;
			
			\draw (A) -- (v1);
			\draw (B) -- (v2);
			\draw (C) -- (v3);
			
			\draw (A) -- (v3);
			\draw (B) -- (v1);
			\draw (C) -- (v2);
			
			\filldraw (A) circle (\dotsize);
			\filldraw (B) circle (\dotsize);
			\filldraw (C) circle (\dotsize);
			\filldraw (v1) circle (\dotsize);
			\filldraw (v2) circle (\dotsize);
			\filldraw (v3) circle (\dotsize);
			
			%\node at ($(v2)!0.6!(fb)$) {\size $\name_2$};
			%\node at ($(v1)!0.6!(fa)$) {\size $\name_1$};
			%\node at ($(v3)!0.6!(fc)$) {\size $\name_3$};
			
			%\node at ($(v2)!0.6!(fd)$) {\size $\name_{23}$};
			%\node at ($(v1)!0.6!(ff)$) {\size $\name_{12}$};
			%\node at ($(v3)!0.6!(fe)$) {\size $\name_{13}$};
			
			%\node at (intersection of v2--fh and v1--fg) {\size $\name_{123}$};
		\end{tikzpicture}
		\subcaption{The fan $\Sigma_2$}
	\end{minipage}
	\caption{Terminalizations of the singularities of type $\tfrac{1}{9}(1,4,7)$ and $\tfrac{1}{14}(1,9,11)$}
    \label{Fans}
\end{figure}

The affine toric varieties corresponding to the maximal cones of the fan $\Sigma_1$ are  cyclic quotient singularities of type $\tfrac{1}{3}(1,1,2)$ and the ones corresponding to the maximal cones of $\Sigma_2$ are of type $\tfrac{1}{2}(1,1,1)$, hence  the corresponding toric varieties $X_{\Sigma_1}$ and $X_{\Sigma_2}$ have only terminal singularities. Thus, we obtain toric terminalizations
$$\psi_j\colon X_{\Sigma_j}\to U_j.$$
Since the vectors $v,v_1,v_2$ and $v_3$ belong to the plane
\[
    \{ (x_1,x_2,x_3)\in \RR^3\mid x_1+x_2+x_3=1\},
\]
these terminalizations are crepant by the Reid-Shepherd-Barron-Tai criterion (cf.~\cite{Reid87}).

%\begin{rem}\label{rem:rationalSing}
%    Let $X$ be a normal variety with rational singularities and $\psi\colon X'\to X$ a partial resolution such that $X'$ has again rational singularities. Then a similar reasoning as in the previous proof shows that
%    \[
%        R^q\psi_\ast\sO_{X'}=0\qquad \makebox{for all}\qquad q\geq 1.
%    \]
%\end{rem}

It finally remains to verify that the terminalizations $\psi_j\colon X_{\Sigma_j}\to U_j$ actually satisfy the conditions~\ref{eq:PropertiesRes}.

\begin{Notation}
	We denote by $D_i'\subset U_j$ and $D_i\subset X_{\Sigma_j}$ the divisors corresponding to the rays generated by $e_i$, $i=1,2,3$, and let $E_w\subset X_{\Sigma_j}$ be the exceptional divisor of the terminalizations corresponding to the added ray generated by $w$, where $w=v$ if $j=1$, and $w\in\{v_1,v_2,v_3\}$ else.
\end{Notation}

\begin{Lemma}
	The terminalizations $\psi_j$, $j=1,2$, fulfill $(\psi_j)_\ast(\Theta_{X_{\Sigma_j}})\simeq \Theta_{U_j}$.
\end{Lemma}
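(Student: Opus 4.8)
Since the assertion is local over the affine variety $U_j$ and everything in sight is toric, the plan is to reduce it to a weight‑by‑weight computation of torus‑invariant vector fields. Write $U_j=U_\sigma$ for $\sigma=\cone(e_1,e_2,e_3)\subseteq (N_j)_\RR$, with big torus $T_N$; then $X_{\Sigma_j}$ is the toric variety of the refinement $\Sigma_j$ and $\psi_j$ the induced proper birational toric morphism, which is an isomorphism over the complement of the closed torus orbit $\{P\}$, because in passing from $\sigma$ to $\Sigma_j$ only the maximal cone $\sigma$ gets subdivided. As the singularities $\tfrac19(1,4,7)$ and $\tfrac1{14}(1,9,11)$ are isolated, $\{P\}$ is exactly the singular locus of $U_j$ and has codimension $3$. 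The sheaf $\Theta_{U_j}$ is reflexive, so every vector field on $U_j\setminus\{P\}$ extends over $P$; transporting sections from $X_{\Sigma_j}$ along the above isomorphism, this gives $H^0(X_{\Sigma_j},\Theta_{X_{\Sigma_j}})\subseteq H^0(U_j,\Theta_{U_j})$, where we identify both with $\CC[M_j]$‑submodules of $H^0(T_N,\Theta_{T_N})=\CC[M_j]\otimes_\CC (N_j)_\CC$ via restriction to the dense torus (injective and grading‑preserving). Since $U_j$ is affine and both $(\psi_j)_*\Theta_{X_{\Sigma_j}}$ and $\Theta_{U_j}$ are coherent, it then suffices to prove the reverse inclusion $H^0(U_j,\Theta_{U_j})\subseteq H^0(X_{\Sigma_j},\Theta_{X_{\Sigma_j}})$: the two sheaves will have the same module of global sections and hence coincide.

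For the reverse inclusion I would invoke the standard combinatorial description of torus‑invariant vector fields: for a full‑dimensional simplicial cone $\delta$ with primitive ray generators $u_1,\dots,u_r$, the weight‑$m$ part of $H^0(U_\delta,\Theta_{U_\delta})$ is spanned by fields $\xi_{m,k}$, one for each $k$ with $\langle u_k,m\rangle\geq -1$ and $\langle u_l,m\rangle\geq 0$ for all $l\neq k$, and for $m\in\delta^\vee$ the regular function $\chi^m$ multiplies the invariant vector fields, so $H^0(U_\delta,\Theta_{U_\delta})_m=H^0(T_N,\Theta_{T_N})_m$. Covering $X_{\Sigma_j}$ by the charts $U_{\sigma'}$ of its maximal cones $\sigma'$ yields $H^0(X_{\Sigma_j},\Theta_{X_{\Sigma_j}})=\bigcap_{\sigma'}H^0(U_{\sigma'},\Theta_{U_{\sigma'}})$ inside $H^0(T_N,\Theta_{T_N})$, so it is enough to show $H^0(U_\sigma,\Theta_{U_\sigma})\subseteq H^0(U_{\sigma'},\Theta_{U_{\sigma'}})$ for every maximal cone $\sigma'$ of $\Sigma_j$. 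The crucial point is that each newly inserted ray $w$ of $\Sigma_j$ — namely $v=\tfrac13(1,1,1)$ for $j=1$ and $v_1,v_2,v_3$ for $j=2$ — lies in the relative interior of $\sigma$ on the hyperplane $\{x_1+x_2+x_3=1\}$, hence $w=\sum_i c_ie_i$ with all $c_i>0$ and $\sum_i c_i=1$; so whenever $\langle e_{i_0},m\rangle\geq -1$ and $\langle e_i,m\rangle\geq 0$ for $i\neq i_0$ we get $\langle w,m\rangle=\sum_i c_i\langle e_i,m\rangle\geq -c_{i_0}>-1$, and since $w\in N_j$, $m\in M_j$ this integer satisfies $\langle w,m\rangle\geq 0$.

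Now fix $m$ with $H^0(U_\sigma,\Theta_{U_\sigma})_m\neq 0$. If $m\in\sigma^\vee$, then every ray of $\Sigma_j$ (a nonnegative combination of $e_1,e_2,e_3$) pairs nonnegatively with $m$, so $m\in(\sigma')^\vee$ for each maximal cone $\sigma'$ and $H^0(U_{\sigma'},\Theta_{U_{\sigma'}})_m=H^0(T_N,\Theta_{T_N})_m\supseteq H^0(U_\sigma,\Theta_{U_\sigma})_m$. If $m\notin\sigma^\vee$, there is a unique ray $e_{i_0}$ of $\sigma$ with $\langle e_{i_0},m\rangle\geq -1$ and $\langle e_i,m\rangle\geq 0$ for $i\neq i_0$, and $H^0(U_\sigma,\Theta_{U_\sigma})_m=\CC\,\xi_{m,i_0}$; by the observation above every ray of $\Sigma_j$ other than $e_{i_0}$ pairs nonnegatively with $m$, so for a maximal cone $\sigma'$ either $e_{i_0}$ is a ray of $\sigma'$, in which case $\xi_{m,i_0}$ is a legitimate weight‑$m$ generator for $\sigma'$ and hence lies in $H^0(U_{\sigma'},\Theta_{U_{\sigma'}})_m$, or it is not, in which case all rays of $\sigma'$ pair nonnegatively with $m$, so $m\in(\sigma')^\vee$ and $\xi_{m,i_0}\in H^0(T_N,\Theta_{T_N})_m=H^0(U_{\sigma'},\Theta_{U_{\sigma'}})_m$. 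Either way $H^0(U_\sigma,\Theta_{U_\sigma})_m\subseteq H^0(U_{\sigma'},\Theta_{U_{\sigma'}})_m$; summing over $m\in M_j$ and intersecting over the finitely many $\sigma'$ gives the inclusion and finishes the proof. I expect the main obstacle to be getting this toric bookkeeping exactly right — in particular the integrality argument ensuring that an inserted ray can never be the distinguished ray of a weight already supported by $\sigma$ — whereas the sheaf‑theoretic reduction in the first paragraph is routine.
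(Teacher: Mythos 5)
Your argument is correct. It differs from the paper's proof mainly in how the reduction is packaged: the paper invokes \cite{BGKod1}*{Proposition~5.8} (which rests on the dual toric Euler sequence) to translate the statement $(\psi_j)_\ast\Theta_{X_{\Sigma_j}}\simeq\Theta_{U_j}$ into the equality of lattice polyhedra $P_{D_i}\cap N_j^\vee=P_{D_i'}\cap N_j^\vee$ for the three boundary divisors, whereas you work directly with the Demazure-type weight decomposition of invariant vector fields and compare $H^0(U_\sigma,\Theta)_m$ with $H^0(U_{\sigma'},\Theta)_m$ cone by cone. The decisive arithmetic step is, however, literally the same in both proofs: for a weight $m\in M_j$ with $\langle e_{i_0},m\rangle\geq -1$ and $\langle e_i,m\rangle\geq 0$ otherwise, the pairing $\langle w,m\rangle$ with an inserted ray $w$ is an integer strictly greater than $-1$, hence non-negative -- this is exactly the paper's computation $-1\leq x_1+2x_2+4x_3\equiv 0 \bmod 7$. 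What your route buys is self-containedness (no appeal to an external proposition, and the one-negative-ray dichotomy makes the role of the hyperplane $x_1+x_2+x_3=1$ and of integrality transparent); what it costs is that you must set up the combinatorial description of $\Theta$-weight spaces carefully -- in particular that the ray generators are primitive in $N_j$ and that the description $H^0(U_\delta,\Theta)_m=\bigcap_\rho(\cdot)$ is the one for the reflexive tangent sheaf of the singular affine charts -- all of which you do correctly, and which holds here since the relevant quotients are free in codimension one.
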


\begin{proof}
	We give a proof for the case $j=2$, the other one is similar. 
	By \cite{BGKod1}*{Proposition~5.8}, which even holds for partial toric resolutions, we have to show that $P_{D_i}\cap N_2^\vee=P_{D_i'}\cap N_2^\vee$ holds for all $i=1,2,3$. By symmetry, it is enough to consider the case $i=1$. The polyhedrons of the divisors are given by
	\begin{align*}
		P_{D_1'}&=\{x\in\RR^3\mid x_1\geq -1,\quad x_2,\, x_3\geq 0\},\\
		P_{D_1}&=P_{D_1'}\cap\{x\in\RR^3\mid \langle x,v_k\rangle \geq 0,\: k=1,2,3\}.		
	\end{align*}
	Thus, the inclusion $P_{D_1}\cap N_2^\vee\subset P_{D_1'}\cap N_2^\vee$ is obvious. For the converse, let $x\in P_{D_1'}\cap N_2^\vee\subset\ZZ^3$. Then, since $v_1\in N_2$, it holds
	\[
	-1 \leq x_1+2x_2+4x_3 \equiv 0 \mod 7.
	\]
	This implies that the sum $x_1+2x_2+4x_3 =7\cdot\langle x, v_1\rangle$ is non-negative. Analogously, $\langle x,v_k\rangle \geq 0$ for $k=2,3$. Hence, $x$ belongs to $P_{D_1}$.
\end{proof}

\begin{Lemma}
	The terminalizations $\psi_j$ fulfill $R^1(\psi_j)_\ast(\Theta_{X_{\Sigma_j}})=0$.
\end{Lemma}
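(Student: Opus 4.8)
The plan is to reduce the vanishing to a short toric computation. Since $U_j$ is affine and $\psi_j$ is proper, the sheaf $R^1(\psi_j)_\ast\Theta_{X_{\Sigma_j}}$ is coherent and set-theoretically supported at the single singular point of $U_j$; by Leray it is the $\CC[U_j]$-module associated to $H^1(X_{\Sigma_j},\Theta_{X_{\Sigma_j}})$, which is therefore of finite length over $\CC$. Hence it suffices to prove $H^1(X_{\Sigma_j},\Theta_{X_{\Sigma_j}})=0$.

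To compute this group I would use that $X_{\Sigma_j}$ is a simplicial toric variety without torus factors, so the reflexive tangent sheaf $\Theta_{X_{\Sigma_j}}=\SheafHom(\Omega^1_{X_{\Sigma_j}},\sO_{X_{\Sigma_j}})$ is $M$-graded and $H^1(X_{\Sigma_j},\Theta_{X_{\Sigma_j}})=\bigoplus_{m\in M}H^1(X_{\Sigma_j},\Theta_{X_{\Sigma_j}})_m$ (a finite sum, by the previous paragraph). Each graded piece is computed by the Čech complex of the affine toric cover $\{U_\sigma\mid \sigma\in\Sigma_j \text{ maximal}\}$ — three charts for $j=1$, namely the copies of $\tfrac{1}{3}(1,1,2)$, and seven charts for $j=2$, the nodes $\tfrac{1}{2}(1,1,1)$ — each intersection $U_\sigma\cap U_{\sigma'}=U_{\sigma\cap\sigma'}$ being again affine, so this Čech complex indeed computes sheaf cohomology. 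Here the degree-$m$ part $\Gamma(U_\sigma,\Theta_{X_{\Sigma_j}})_m$ is the subspace of $\chi^m\otimes N_{j,\CC}$ consisting of those $T$-invariant derivations that extend across the invariant divisors $D_1,D_2,D_3,E_w$ of $U_\sigma$; by the standard description of invariant vector fields on a toric variety (the same tool that underlies \cite{BGKod1}*{Proposition~5.8}) it equals $\bigcap_{\rho}\Gamma_{\rho,m}$, where over a ray $\rho$ with primitive generator $u_\rho$ one has $\Gamma_{\rho,m}=N_{j,\CC}$ if $\langle m,u_\rho\rangle\geq 0$, $\Gamma_{\rho,m}=\CC\cdot u_\rho$ if $\langle m,u_\rho\rangle=-1$, and $\Gamma_{\rho,m}=0$ if $\langle m,u_\rho\rangle\leq -2$.

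In every degree $m$ with $\langle m,u_\rho\rangle\geq 0$ for all rays of $\Sigma_j$ the graded subsheaf is the constant sheaf $\underline{N_{j,\CC}}$, which is acyclic on the irreducible variety $X_{\Sigma_j}$ and contributes nothing; so only finitely many $m$ — those for which some pairing $\langle m,u_\rho\rangle$ is negative while some maximal cone still carries a nonzero section — have to be inspected, a set that one delimits explicitly in terms of the rays $e_1,e_2,e_3,v$ (resp. $e_1,e_2,e_3,v_1,v_2,v_3$). For each such $m$ one then writes out the (very short) Čech complex formed by the subspaces $\Gamma(U_\sigma,\Theta)_m\subseteq N_{j,\CC}$ and checks by linear algebra over $\CC$ that its first differential surjects onto the cocycles: for $\Sigma_1$ this is a hand computation, for $\Sigma_2$ it is most conveniently a MAGMA check, entirely parallel to the proof of the preceding lemma (and to \cite{BGKod1}*{Section~5} and the computations of Section~\ref{sec:classQuot}). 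The only genuine difficulty is organizational — correctly handling the graded pieces of the reflexive tangent sheaf at the non-smooth cones $\tfrac{1}{3}(1,1,2)$ and $\tfrac{1}{2}(1,1,1)$, and pinning down the finite window of degrees $m$ that must be examined; once this is set up, the vanishing $R^1(\psi_j)_\ast\Theta_{X_{\Sigma_j}}=0$ follows by a mechanical finite verification.
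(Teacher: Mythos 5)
Your overall strategy---compute $H^1(X_{\Sigma_j},\Theta_{X_{\Sigma_j}})$ weight by weight via the \v{C}ech complex of the torus-invariant affine cover, using the standard description of the graded pieces of the reflexive tangent sheaf along each ray---is a legitimate and genuinely different route from the paper's. The paper instead pushes $\Theta_{X_{\Sigma_j}}$ through the dual toric Euler sequence, reduces (using rational singularities and affineness of $U_j$) to the vanishing of $H^1$ of the line bundles $\sO_{X_{\Sigma_j}}(D_i)$ and $\sO_{X_{\Sigma_j}}(E_k)$, and kills these by Demazure vanishing for the nef Cartier divisors $2D_i$ and by the connectedness criterion for the sets $V_{E,m}$ for the non-nef exceptional divisors. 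The point of that route is precisely that the line-bundle criteria are uniform in the weight $m$, so no enumeration over weights is ever required.

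The gap in your write-up is the delimitation of the weights $m$ that must be inspected. You propose to discard all $m$ for which no maximal cone carries a nonzero section; but $H^1$ of the degree-$m$ \v{C}ech complex is governed by the sections over the pairwise intersections $U_{\sigma\cap\sigma'}$, which are cones with only two rays and therefore carry the full space $N_{j,\CC}$ for infinitely many $m$ (namely those nonnegative on the two rays of $\sigma\cap\sigma'$ and arbitrarily negative on the remaining rays). So the set of weights with a nontrivial degree-$m$ complex is infinite, and your criterion does not reduce the problem to a finite check. The abstract finite-dimensionality of $R^1(\psi_j)_\ast\Theta_{X_{\Sigma_j}}$ tells you only that finitely many $m$ have $H^1_m\neq 0$, not which ones, so it cannot be used to truncate the verification either. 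The fix is to observe that the degree-$m$ complex depends only on the pattern of values of $\langle m,u_\rho\rangle$ in the three regimes $\geq 0$, $=-1$, $\leq -2$ over the rays of $\Sigma_j$, so that there are at most $3^{4}$ (for $\Sigma_1$) resp.\ $3^{6}$ (for $\Sigma_2$) complexes to examine, of which only the realizable patterns matter. With that organizing principle, and with the resulting finite linear-algebra verification actually carried out, your argument would close; as written, the entire substantive content of the lemma is deferred to an unexecuted computation whose finiteness rests on an incorrect criterion.
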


\begin{proof}
	We only verify the assertion in the case $j=2$. For simplicity, we drop the index $j$ in the following.\\
    The dual of toric Euler-sequence (cf. \cite{CLS11}*{Theorem~8.1.6}) on $X_\Sigma$ reads
    \begin{align}\label{eq:EulerDual}
	   0\longrightarrow \sO_{X_\Sigma}^{\oplus r}\longrightarrow  \bigoplus_{i=1}^3\sO_{X_\Sigma}(D_i)\oplus\bigoplus_{k=1}^3\sO_{X_\Sigma}(E_k)\longrightarrow \Theta_{X_\Sigma}\longrightarrow 0.
    \end{align}
    Since $U$ and $X_\Sigma$ have rational singularities, this yields an isomorphism
\begin{align}\label{eq:R1}
	R^1\psi_\ast\Theta_{X_{\Sigma}}\simeq \bigoplus_{i=1}^3R^1\psi_\ast\sO_{X_\Sigma}(D_i)\oplus\bigoplus_{k=1}^3R^1\psi_\ast\sO_{X_\Sigma}(E_k).
\end{align}
  Thus, since $U$ is affine,  it is enough to prove:
    	\begin{enumerate}[(1)]
		\item $H^1(X_\Sigma, \sO_{X_\Sigma}(D_i))=0$ for $i=1,2,3$
		\item $H^1(X_\Sigma, \sO_{X_\Sigma}(E_{v_k}))=0$ for $k=1,2,3$.
	\end{enumerate}
     
	By symmetry, it is enough to consider the cases $i=1$ and $k=1$. Since $X_\Sigma$ has only singularities of type $\tfrac{1}{2}(1,1,1)$, the divisor $D_1$ is $\QQ$-Cartier with index $2$.  The Cartier data for $2D_1$ are given by the elements $m_{\tau_2}=(-2,\ 8,\ 0),\:m_{\tau_3}=m_{\tau_{23}}=(-2,\ 0,\ 4)$, where $\tau_2=\cone(e_1,e_3,v_2),\,\tau_{3}=\cone(e_1,e_2,v_3)$ and $\tau_{23}=\cone(e_1,v_2,v_3)$, and $m_{\tau}=0$ for all other maximal cones $\tau$ of $\Sigma$. Since all these elements belong to the polyhedron associated to $2D_1$,
    \[
        P_{2D_1}=\{x\in\RR^3\mid x_1\geq -2,\quad x_2,\, x_3\geq 0, \quad \langle x,v_k\rangle \geq 0,\: k=1,2,3\},
    \]
    we conclude that the Cartier divisor $2D_1$ is basepoint free (cf.~\cite{CLS11}*{Proposition~6.1.1}), hence nef.  The vanishing of the cohomology group of the divisor $D_1$ follows now from the theorem of Demazure (cf. \cite{CLS11}*{Theorem~9.2.3}).
	
	Finally, let $E:=E_{v_1}=\sum_{\rho\in\Sigma(1)}a_\rho D_\rho$, where $a_\rho=1$ if $\rho=\cone(v_1)$ and $a_\rho=0$ else. Since $E$ is not nef, we can not apply Demazure-vanishing. Instead, we show that the sets
    $$V_{E,m}:=\bigcup_{\tau\in \Sigma_{\max}}\Conv(u_\rho\mid \rho\in\tau(1),\langle m,u_\rho\rangle <-a_\rho)$$
    are connected for all $m\in N^\vee$, where the sum runs over all maximal cones of the fan $\Sigma$. This implies that $H^1(X_\Sigma, \sO_{X_\Sigma}(E)))=0$ (cf. \cite{CLS11}*{Theorem~9.1.3}).
    
	%Note that $V_{E,m}$ is contained in $\Conv(e_1,e_2,e_3)$ and two elements $u_\rho$ and $u_{\rho'}$, where $\rho$ and $\rho'$ belong to the same cone, are contained in $V_{E,m}$ if and only if the line segment $\Conv(u_\rho,u_{\rho'})$ is contained in $V_{E,m}$.
    Looking at the illustration of the fan $\Sigma=\Sigma_2$ in Figure~\ref{Fans}, we see that $V_{E,m}$ is disconnected if  and only if 
    there exists an $i$ such that $e_i,v_i\in V_{E,m}$, but $e_j, v_j \notin V_{E,m}$ for all $j\neq i$. We only treat the case $i=1$, as the other cases are analogous. The conditions $v_1\in V_{E,m}$ and $v_2\notin V_{E,m}$ are equivalent to $\langle m,v_1\rangle < -1$ and $\langle m,v_2\rangle \geq 0$, written out:
    \[
        m_1+2m_2+4m_3 < -7 \qquad \text{and}\qquad  -4m_1-m_2-2m_3\leq 0 .
    \]
    By adding the first inequality to two times the second inequality, we obtain
    $-7 m_1< -7$, so $m_1> 1$. This is a contradiction because the condition $e_1\in V_{E,m}$ means that $m_1=\langle m,e_1\rangle <0$. 
\end{proof}

%****************************************************************************************************

\section{Proof of the Main Theorem~\ref{theo:main}}\label{sec:proofMain}

In the previous sections, we discussed in several steps all that we need for the proof of the main Theorem~\ref{theo:main}. In this final section, we summarize the reasoning.

\begin{proof}[Proof of Theorem~\ref{theo:main}]
    Let $G$ be a finite group admitting a rigid, holomorphic and translation-free action on a 3-dimensional complex torus $T$  with finite fixed locus such that the quotient has canonical singularities and $p_g=0$. 
By Theorem~\ref{theo:groups}, $G$ must be isomorphic to one of the following groups: $\ZZ_9$, $\ZZ_{14}$, $\ZZ_3^2$, $\ZZ_3^3$, or $\ZZ_9\rtimes\ZZ_3$.
Furthermore, as stated in Proposition~\ref{prop:ConsBieb}, the quotients obtained by different groups cannot be homeomorphic, and the  homeomorphism and diffeomorphism classes of the quotients are the same.

    For the cyclic groups, the classification is easy: there is one and only one biholomorphism class (cf. Proposition~\ref{prop:Cyclic}).\\
    In the cases $G=\ZZ_3^2$, $G=\ZZ_3^3$ and $G=\ZZ_9\rtimes\ZZ_3$, the situation is more involved. Proposition~\ref{prop:AnalyticRepr} shows, that there are two possibilities for the analytic representation for $\ZZ_3^2$, whereas the representations of $\ZZ_3^3$ and $\ZZ_9\rtimes \ZZ_3$ is unique up to equivalence of representations and automorphisms of the groups. In any case, we can deduce the structure of the torus from the description of the linear part of the action: it is the quotient of three copies of Fermat's elliptic curve $E$ by a subgroup $K$ of $E[3]^3$, which is the kernel of an isogeny given by addition. 
    
    If $G=\ZZ_3^3$, then there are four candidates for the kernels but only two of them allow actions with isolated fixed points (Remark~\ref{rem:ActionsZ3^3}). The classification is settled in Proposition~\ref{prop:ClassificationZ3^3}.\\
    If $G=\ZZ_3^2$, we have two subcases according to the two choices $\rho_1$ and $\rho_2$ of the analytic representation. They lead to distinct $\mathcal C^{\infty}$-classes of  quotients because  the representations in the 
    $\Aut(\mathbb Z_3^2)$-orbit of $\rho_1$ are not  equivalent to $\rho_2$, even considered as real representations.    The classification of the quotients where the action has linear part $\rho_1$ is summarized in Proposition~\ref{prop:oneBihol}. If the analytic representation equals $\rho_2$, then two kernels are possible and the fine classification is explained in Proposition~\ref{prop:Classification2Z3^2}.\\ 
    In the case $G=\ZZ_9\rtimes \ZZ_3$, there is one and only one biholomorphism class. The proof is given in Proposition~\ref{prop:ClassificationZ9:Z3}.\\
    Next, we explain how to determine the baskets of singularities in each case. In Theorem~\ref{theo:Singularities}, we prove that all stabilizer groups are cyclic and determine the possible types of canoncial singularities. The orbifold Riemann-Roch formula (Proposition~\ref{prop:CondNumberSing}) allows us to compute the possible baskets of all non-Gorenstein singularities.\\
    The Propositions~\ref{prop:-id}, \ref{prop:ze3} and \ref{prop:LastCase} ensure that only the cases $k=9$ ($G=\ZZ_{14}$), $k=12$ ($G=\ZZ_{9}$ or $\ZZ_9\rtimes \ZZ_3$) and $k=15$ ($G=\ZZ_3^2$ or $\ZZ_3^3$) of Corollary~\ref{cor:BasketsSing} can occur. To count the Gorenstein singularities, we use the Lefschetz fixed-point formula (Lemma~\ref{le:Fix} and \ref{le:Lefschetz}).\\    
   % By Theorem~\ref{theo:Singularities}, the only Gorenstein singularities that can occur are of type $\tfrac{1}{3}(1,1,1)$ (if $\ze_3\cdot \id$ belongs to the image of the analytic representation) or $\tfrac{1}{7}(1,2,4)$ (if $G$ contains an element of order $7$). Lemma~\ref{le:Lefschetz} explains how to compute the number of singularities of the first type. It remains to determine the number of singularities of type $\tfrac{1}{7}(1,2,4)$ in the case $G=\ZZ_{14}$ ($k=9$). The group $G=\ZZ_{14}$ has $6$ elements of order $7$ having each the same $7$ fixed points (cf. Lemma~\ref{le:Fix}). One of them must have the whole group $G$ as stabilizer since $N_{14}=1$ and the other $6$ elements have $\ZZ_7$ as stabilizer and descend to $3$ singularities of type $\tfrac{1}{7}(1,2,4)$ in the quotient.\\
    In the Corollaries~\ref{cor:universalCover} and \ref{cor:fundamentalgroup} and Remark~\ref{rem:fundamentalgroup}, the fundamental groups of the quotients and the structure of their universal covers are described.\\
    Finally, Section~\ref{sec:resolution} shows that each quotient admit an infinitesimally rigid crepant terminalization with numerically trivial canonical divisor and furthermore, smooth rigid 3-folds as resolutions.
\end{proof}

% ****************************************************************************************************

\end{document}